\theoremstyle{plain}
\newtheorem{thm}{Theorem}[section]
\newtheorem{prop}[thm]{Proposition}
\newtheorem{corollary}[thm]{Corollary}
\newtheorem{lemma}[thm]{Lemma}
\theoremstyle{definition}
\newtheorem{defn}[thm]{Definition}
\newtheorem{remark}[thm]{Remark}
\newtheorem{example}[thm]{Example}
\newtheorem*{ack}{Acknowledgement}
\numberwithin{equation}{section}
\newcommand{\real}{\mathds{R}}
\newcommand{\I}{\mathds{1}}
\newcommand\R{\mathds{R}}
\newcommand\E{\mathds{E}}
\newcommand\Pp{\mathds{P}}
\newcommand\N{\mathds{N}}
\newcommand\FF{\mathcal{F}}
\newcommand\EE{\mathcal{E}}
\newcommand\II{\mathcal{I}}
\newcommand\MM{\mathcal{M}}
\newcommand{\diam}{\operatorname{diam}}
\newcommand{\entropy}{\mathcal{H}}
\newcommand\newAF{\widetilde{A}}
\newcommand{\scalar}[1]{\langle #1\rangle}
\renewcommand{\le}{\leqslant}
\renewcommand{\leq}{\leqslant}
\renewcommand{\ge}{\geqslant}
\renewcommand{\geq}{\geqslant}
\begin{document}\allowdisplaybreaks

\title{\Large\bfseries Absolute continuity and singularity of probability measures induced by a  purely discontinuous Girsanov transform of a stable process}

\author{\itshape
    Ren\'e L.~Schilling
    \and
    \itshape Zoran Vondra\v{c}ek
}

\date{}

\maketitle

\begin{abstract}\noindent
    In this paper we study mutual absolute continuity and singularity of probability measures on the path space which are  induced by an isotropic stable L\'evy process and  the  purely discontinuous Girsanov transform  of this process.  We also look at the problem of finiteness of the relative entropy of these measures. An important tool in the paper is the question  under which circumstances the a.s.\ finiteness of an additive functional at infinity implies the finiteness of its expected value.

    \medskip
    \noindent
    \emph{2010 Mathematics Subject Classification:} Primary 60J55; Secondary 60G52, 60J45,  60H10.

    \medskip\noindent
    \emph{Keywords:} Additive functionals, stable processes,  purely discontinuous Girsanov transform, absolute continuity, singularity, relative entropy.
\end{abstract}

\section{Introduction}\label{sec-1}
One of the common issues when studying Markov processes is the question whether a random time-change of a conservative (transient) Markov process is again conservative. The time-change is usually realized as the generalized  inverse of a positive additive functional. Typical examples are functionals of the form $A_t=\int_0^t f(X_s)\, ds$ where $f$ is a real-valued function defined on the state space $E$ of the Markov process $X=(X_t,  \MM, \MM_t,  \Pp_x)$,  $t\ge 0$, $x\in E$. The time-changed process is again conservative if, and only, if $A_{\infty}=\infty$ a.s.; equivalently, it is not conservative if, and only, if $\Pp_x (A_{\infty}<\infty)>0$ for some $x$. The related question, whether the expectation of $\E_x A_{\infty}$ is finite, is relatively easy to answer. Note that $\E_x A_{\infty}= Gf (x)$ is the Green potential of $f$. Using potential-theoretic methods  we can reduce this problem to properties  of the corresponding Green function; this analytic problem is rather well understood. Hence, it is crucial to understand under which circumstances the finiteness of the additive functional at infinity,  $A_{\infty}<\infty$, will imply the finiteness of the expectation, $\E_x A_{\infty} < \infty$.

If $X$ is a diffusion on an interval in $\R$, additive functionals of the type $A_t=\int_0^{t}f(X_s)\, ds$ are sometimes called \emph{perpetual integral functionals}. The question of necessary and sufficient conditions for the a.s.~finiteness of $A_{\infty}$ has been addressed in \cite{KSY} following earlier work by various authors, notably by Engelbert and co-authors, see e.g.~\cite{ES}.

Additive functionals also appear in the study of absolute continuity of measures on the path space of a process.  In fact, by the Girsanov theorem, the Radon-Nikod\'ym density often takes the form $\exp(A_t)$ where $A_t$ is an additive functional as above. The problem when two probability measures are absolutely continuous over a finite time horizon is rather well understood. On the other hand, not much is known if the time-interval is infinite. Clearly, the question whether $A_{\infty}$ is finite or infinite plays a decisive role. For elliptic diffusions with drift this question has been addressed in \cite{B-AP}. To be more precise, let
$$
    \mathcal{L}_{a,b}
    =\frac12 \sum_{i,j=1}^d a_{ij}(x) \frac{\partial^2}{\partial x_i\partial x_j}+\sum_{i=1}^d b_i(x)\frac{\partial}{\partial x_i}
$$
be a diffusion operator on $\R^d$ with a $a(x) = (a_{ij}(x))_{i,j=1}^d$ positive definite and locally H\"older continuous diffusion coefficient and a locally H\"older continuous drift $b(x) = (b_i(x))_{i=1}^d$, and assume that the martingale problem for $(\mathcal{L}_{a,b},C_c^\infty(\R^d))$ is well posed. Denote by $\Pp_x^{a,b}$ the corresponding probability measure on the path space $C([0,\infty), \R^d)$, and the canonical process by $X=(X_t)_{t\ge 0}$. It is proved in \cite[Theorem 1]{B-AP} that for a further drift coefficient  $\hat{b}$ the following holds:
\begin{enumerate}
\item[(i)]
    $\Pp_x^{a,b}\perp \Pp_x^{a,\hat{b}}
    \iff
    \int_0^{\infty}\langle \hat{b}-b, a^{-1}(\hat{b}-b)\rangle (X_s)\, ds =\infty \ \ \text{a.s. } \Pp_x^{a,b} \text{  or a.s. } \Pp_x^{a,\hat{b}}$;
\item[(ii)]
    $\Pp_x^{a,\hat{b}}\ll \Pp_x^{a,b}
    \iff
    \int_0^{\infty}\langle \hat{b}-b, a^{-1}(\hat{b}-b)\rangle (X_s)\, ds < \infty \ \  \text{a.s. } \Pp_x^{a,\hat{b}}$.
\end{enumerate}
Ben-Ari and Pinsky also give a formula for the relative entropy $\entropy(\Pp_x^{a,b}; \Pp_x^{a,\hat{b}})$ of the measures $\Pp_x^{a,b}$ and $\Pp_x^{a,\hat{b}}$. Moreover, if both operators  $\mathcal{L}_{a,b}$ and $\mathcal{L}_{a,\hat{b}}$ are Fuchsian---roughly speaking: $a$ is uniformly elliptic and $(1+|x|)|b(x)|$, $(1+|x|)|\hat{b}(x)|$ are bounded---, then
$$
    \text{either}\quad
        \Pp_x^{a,b}\perp \Pp_x^{a,\hat{b}}
    \quad\text{or}\quad
    \Pp_x^{a,b}\sim \Pp_x^{a,\hat{b}};
$$
in the second case $\sup_{x\in \R^d}\entropy(\Pp_x^{a,b}; \Pp_x^{a,\hat{b}})<\infty$ and $\sup_{x\in \R^d}\entropy( \Pp_x^{a,\hat{b}}; \Pp_x^{a,b})<\infty$, cf.~\cite[Theorem 2]{B-AP}. Recall that the relative entropy of two measures $\nu$ and $\mu$ is defined as
$$
    \entropy(\nu; \mu)
    =
    \begin{cases}\displaystyle
        \int \frac{d\nu}{d\mu}\log \frac{d\nu}{d\mu}\, d\mu=\int\log \frac{d\nu}{d\mu} \, d\nu &\text{if\ \ } \nu\ll\mu,\\
         +\infty & \text{otherwise.}
    \end{cases}
$$
The first goal of this paper is to look into the question when the a.s.~finiteness of an additive functional implies the finiteness of its expectation; this problem is first considered in a rather general framework and then in the more specific framework of isotropic stable L\'evy processes in $\R^d$. Our second goal is to study the absolute continuity and mutual singularity of probability measures induced by a purely discontinuous Girsanov transform.
More precisely, let $X=(X_t,\MM,\MM_t,\Pp_x)$, $t\geq 0$, $x\in\R^d$, be a conservative symmetric (w.r.t.~Lebesgue measure) right Markov process in  $\R^d$  defined on the path space with filtration $(\MM_t)_{t\ge 0}$, and assume that $\MM=\sigma(\cup_{t\ge 0}\MM_t)$.
By $I_2(X)$ we denote all bounded, symmetric functions $F:\R^d\times \R^d\to\real$, vanishing on the diagonal, such that
$$
    \E_x\bigg[\sum_{s\le t}F^2(X_{s-},X_s)\bigg] < \infty
    \quad\text{for all\ }x\in\R^d,
$$
holds, see Definition \ref{d:i2x} in Section \ref{sec-2} for details. Such functions give rise to martingale additive functionals $(M^F_t)_{t\ge 0}$ whose quadratic variation is given by $[M^F]_t= \sum_{s\le t}F^2(X_{s-}, X_s)$. If $\inf_{x,y}F(x,y)>-1$, the solution to the SDE $L^F_t=1+\int_0^t L^F_{s-}\, dM^F_s$ is a positive local martingale, hence a positive supermartingale under each $\Pp_x$. By the general theory, there exists a family $\widetilde{\Pp}_x$ of (sub)-probability measures on $\MM$ such that ${d\widetilde{\Pp}_x} _{|\MM_t}=L^F_t \,{d\Pp_x} _{|\MM_t}$ for all $t\ge 0$.  Under these measures $X$ is a strong Markov process. We will write $\widetilde{X}=(\widetilde{X}_t, \MM, \MM_t, \widetilde{\Pp}_x)$ to denote this process. The process $\widetilde{X}$ is called a \emph{purely discontinuous Girsanov transform} of $X$, see \cite{CS03,S} and Section \ref{sec-2} for further details. Since $L^F_t>0$, we have ${d\widetilde{\Pp}_x} _{|\MM_t} \sim {d\Pp_x} _{|\MM_t}$ for all $t\ge 0$. We are interested under which conditions $\widetilde{\Pp}_x$ and $\Pp_x$ are mutually absolutely continuous on the whole time-interval $[0,\infty)$.  The following three theorems are the main results of the paper.

\begin{thm}\label{t:1}
    Let $X$ be a conservative symmetric right Markov process in $\R^d$.
    Assume that $F\in I_2(X)$ and $\inf_{x,y\in \R^d}F(x,y)>-1$.
    \begin{enumerate}
    \item[\upshape (a)]
        $\widetilde{\Pp}_x\perp \Pp_x$ if, and only, if $\sum_{t>0}F^2(X_{t-},X_t)=\infty$ $\Pp_x$ a.s.~or $\widetilde{\Pp}_x$ a.s.
    \item[\upshape (b)]
        $\widetilde{\Pp}_x\ll\Pp_x$ if, and only, if  $\sum_{t>0}F^2(X_{t-},X_t)<\infty$ $\widetilde{\Pp}_x$ a.s.
    \item[\upshape (\~b)]
        $\Pp_x\ll\widetilde{\Pp}_x$ if, and only, if $\sum_{t>0}F^2(X_{t-},X_t)<\infty$ $\Pp_x$ a.s.
    \item[\upshape (c)]
        $\entropy(\widetilde{\Pp}_x; \Pp_x)= \widetilde{\E}_x\left[ \sum_{t> 0}\left(\log(1+F)-\frac{F}{1+F}\right)(X_{t-},X_t)\right]$ and

        $\entropy(\widetilde{\Pp}_x; \Pp_x)<\infty$ if, and only, if $\widetilde{\E}_x \left[\sum_{t>0}F^2(X_{t-},X_t)\right]<\infty $.

    \item[\upshape (\~c)]
        $\entropy(\Pp_x; \widetilde{\Pp}_x)=\E_x\left[\sum_{t>0} \left(F-\log(1+F)\right)(X_{t-},X_t)\right]$ and

        $\entropy(\Pp_x; \widetilde{\Pp}_x)<\infty$ if, and only, if $\E_x \left[\sum_{t>0}F^2(X_{t-},X_t)\right]<\infty $.
    \end{enumerate}
\end{thm}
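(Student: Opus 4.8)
\emph{Proof strategy.}

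The plan is to reduce all five assertions to the behaviour near $t=\infty$ of the single increasing additive functional $A_t:=\sum_{s\le t}F^2(X_{s-},X_s)=[M^F]_t$. The starting point is that, $M^F$ being a purely discontinuous martingale additive functional with $\Delta M^F_s=F(X_{s-},X_s)$, the equation $L^F_t=1+\int_0^tL^F_{s-}\,dM^F_s$ is solved by the Dol\'eans--Dade exponential $L^F=\mathcal{E}(M^F)$, so that
$$
\log L^F_t=M^F_t-B_t,\qquad B_t:=\sum_{s\le t}\bigl(F-\log(1+F)\bigr)(X_{s-},X_s).
$$
Since $F$ is bounded and $\inf F>-1$, on the compact range of $F$ (which contains $0$) each of $u\mapsto u-\log(1+u)$, $u\mapsto\log(1+u)-\tfrac{u}{1+u}$, $u\mapsto u^2$ is bounded above and below by constant multiples of the other two; hence $B$ is a non-negative increasing additive functional with $cA_t\le B_t\le CA_t$, and $\{A_\infty=\infty\}=\{B_\infty=\infty\}$. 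As $F\in I_2(X)$, $M^F$ is a genuine $L^2$-bounded $\Pp_x$-martingale on each $[0,t]$; I shall use that $L^F$ is then a (true) $\Pp_x$-martingale and $1/L^F$ the density process of $\Pp_x$ w.r.t.\ $\widetilde{\Pp}_x$, cf.\ Section~\ref{sec-2}. In particular $L^F_t\to L^F_\infty\in[0,\infty)$ $\Pp_x$-a.s.\ and $1/L^F_t\to1/L^F_\infty$ $\widetilde{\Pp}_x$-a.s., i.e.\ $L^F_\infty\in(0,\infty]$ $\widetilde{\Pp}_x$-a.s.

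The key step, and the one I expect to be the main obstacle, is the dichotomy
$$
\{L^F_\infty>0\}=\{A_\infty<\infty\}\ \ \Pp_x\text{-a.s.},\qquad
\{L^F_\infty<\infty\}=\{A_\infty<\infty\}\ \ \widetilde{\Pp}_x\text{-a.s.}
$$
On $\{A_\infty<\infty\}$ the inclusions ``$\supseteq$'' are easy: $[M^F]_\infty=A_\infty<\infty$ forces $M^F_t$ to converge, and $B_\infty\le CA_\infty<\infty$, so $\log L^F_t$ converges and $0<L^F_\infty<\infty$. On $\{A_\infty=\infty\}$ I would invoke a law of large numbers for stochastic exponentials of local martingales with jumps bounded and bounded away from $-1$: from $\log(1+u)-u\le -cu^2$ on the range of $F$ one gets $\log L^F_t\le M^F_t-c'A_t$, and since $M^F_t/[M^F]_t\to0$ $\Pp_x$-a.s.\ on $\{[M^F]_\infty=\infty\}$ (with $[M^F]_t\asymp\langle M^F\rangle_t$ there), the right-hand side tends to $-\infty$, so $L^F_\infty=0$; this is exactly where the hypothesis $\inf F>-1$ is used. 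For the $\widetilde{\Pp}_x$-statement I would use the reversibility of the purely discontinuous Girsanov transform (Section~\ref{sec-2}, \cite{CS03,S}): with $\widetilde F:=-F/(1+F)$, which is bounded with $\inf\widetilde F>-1$ and $\widetilde F^2\asymp F^2$, one has $1/L^F=\mathcal{E}(M^{\widetilde F})$ under $\widetilde{\Pp}_x$, where $[M^{\widetilde F}]_t=\sum_{s\le t}\widetilde F^2(X_{s-},X_s)$ satisfies $\{[M^{\widetilde F}]_\infty=\infty\}=\{A_\infty=\infty\}$; running the same argument for $(\widetilde X,\widetilde{\Pp}_x,\widetilde F)$ gives $\{1/L^F_\infty>0\}=\{A_\infty<\infty\}$ $\widetilde{\Pp}_x$-a.s.

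From the dichotomy, parts (a), (b) and (\~b) follow from the general Lebesgue decomposition of the pair of measures on $\MM=\MM_\infty$ in terms of the density process: $\widetilde{\Pp}_x=L^F_\infty\cdot\Pp_x+\widetilde{\Pp}_x(\,\cdot\cap\{L^F_\infty=\infty\})$ and, dually, $\Pp_x=(1/L^F_\infty)\cdot\widetilde{\Pp}_x+\Pp_x(\,\cdot\cap\{L^F_\infty=0\})$, each being a splitting into an absolutely continuous and a mutually singular part. Hence $\widetilde{\Pp}_x\ll\Pp_x\iff\widetilde{\Pp}_x(L^F_\infty=\infty)=0\iff A_\infty<\infty$ $\widetilde{\Pp}_x$-a.s.\ (this is (b)); interchanging the two measures gives (\~b). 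Finally $\widetilde{\Pp}_x\perp\Pp_x\iff L^F_\infty=0$ $\Pp_x$-a.s.\ $\iff A_\infty=\infty$ $\Pp_x$-a.s., and the first decomposition shows that ``$A_\infty=\infty$ $\Pp_x$-a.s.'' forces $\widetilde{\Pp}_x=\widetilde{\Pp}_x(\,\cdot\cap\{L^F_\infty=\infty\})$, i.e.\ (by the dichotomy) ``$A_\infty=\infty$ $\widetilde{\Pp}_x$-a.s.'', and conversely; so the two conditions in (a) are equivalent and (a) follows.

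For the entropy formulas I would work on finite horizons. For each $t$, $\widetilde{\Pp}_x|_{\MM_t}\sim\Pp_x|_{\MM_t}$ with density $L^F_t$, so $\entropy(\widetilde{\Pp}_x|_{\MM_t};\Pp_x|_{\MM_t})=\widetilde{\E}_x[\log L^F_t]=\E_x[L^F_t\log L^F_t]$. It\^o's formula for $z\mapsto z\log z$ along the purely discontinuous $L^F$ (with $dL^F=L^F_{-}\,dM^F$ and $\Delta L^F_s/L^F_{s-}=F(X_{s-},X_s)$) gives
$$
L^F_t\log L^F_t=(\text{local martingale})+\sum_{s\le t}L^F_{s-}\bigl((1+F)\log(1+F)-F\bigr)(X_{s-},X_s);
$$
taking $\E_x$ (the sum is increasing and $z\log z\ge-1/e$, so a localisation applies) and absorbing the weight $L^F_{s-}$ into the change of measure, i.e.\ replacing the jump compensator $N(x,dy)\,ds$ by $(1+F(x,y))N(x,dy)\,ds$, yields
$$
\widetilde{\E}_x[\log L^F_t]=\widetilde{\E}_x\Bigl[\sum_{s\le t}\tfrac{(1+F)\log(1+F)-F}{1+F}(X_{s-},X_s)\Bigr]=\widetilde{\E}_x\Bigl[\sum_{s\le t}\Bigl(\log(1+F)-\tfrac{F}{1+F}\Bigr)(X_{s-},X_s)\Bigr].
$$
Letting $t\uparrow\infty$, using that $\entropy(\widetilde{\Pp}_x|_{\MM_t};\Pp_x|_{\MM_t})\uparrow\entropy(\widetilde{\Pp}_x;\Pp_x)$ (a standard property of relative entropy under an increasing filtration) and monotone convergence (the summand is $\ge0$), gives the formula in (c); and since $\log(1+u)-\tfrac{u}{1+u}\asymp u^2$ on the range of $F$, its finiteness is equivalent to $\widetilde{\E}_x[\sum_{t>0}F^2(X_{t-},X_t)]<\infty$. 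Part (\~c) is simpler: $M^F$ being a true $\Pp_x$-martingale, $\entropy(\Pp_x|_{\MM_t};\widetilde{\Pp}_x|_{\MM_t})=\E_x[-\log L^F_t]=\E_x[B_t]=\E_x\bigl[\sum_{s\le t}(F-\log(1+F))(X_{s-},X_s)\bigr]$, and $t\uparrow\infty$ together with $u-\log(1+u)\asymp u^2$ yields the formula and finiteness criterion in (\~c).
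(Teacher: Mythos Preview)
Your overall architecture is the paper's: reduce (a), (b), (\~b) to the set identity $\{[M^F]_\infty=\infty\}=\{\EE(M^F)_\infty=0\}$ under $\Pp_x$ (and its dual under $\widetilde{\Pp}_x$ via $F_1=-F/(1+F)$), then invoke the Lebesgue decomposition for measures locally equivalent along a filtration (the paper's Lemma~\ref{l:from-B-AP}); for (c), (\~c), compute the entropy on $\MM_t$ and let $t\uparrow\infty$. The differences lie in two technical sub-steps. For the set identity, the paper (Lemma~\ref{l:stochastic-exp}) uses purely algebraic stochastic-exponential tricks---comparing $\EE(M)$ with $\EE(\tfrac12 M)^2$ for the inclusion $\{[M]_\infty=\infty\}\subset\{\EE(M)_\infty=0\}$ and the identity $\EE(M)\EE(-M)=\EE(-[M])$ for the converse---while you argue via $\log L^F_t=M^F_t-B_t$ and a martingale law of large numbers. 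For the entropy in (c), the paper computes $\widetilde{\E}_x[\log L^F_t]=-\widetilde{\E}_x[\log\widetilde{L}^{F_1}_t]$ directly from the analogue of \eqref{e:LF} for the dual density, using only that $\widetilde{M}^{F_1}$ is a true $\widetilde{\Pp}_x$-martingale with mean zero; you instead apply It\^o to $z\mapsto z\log z$ and change measure. Your (\~c) matches the paper exactly.

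Your LLN step is where I would push back. The claim ``$M^F_t/[M^F]_t\to 0$ on $\{[M^F]_\infty=\infty\}$'' with the parenthetical ``$[M^F]_t\asymp\langle M^F\rangle_t$ there'' is not justified as written: the standard LLN gives $M_t/\langle M\rangle_t\to 0$ on $\{\langle M\rangle_\infty=\infty\}$, and transferring this to $[M]$ needs a separate argument. A clean repair is to note that for bounded jumps $\{[M]_\infty=\infty\}=\{\langle M\rangle_\infty=\infty\}$, and that the \emph{compensator} of $B_t$ is continuous and comparable to $\langle M^F\rangle_t$ (since $F-\log(1+F)\asymp F^2$); then work with $M^F_t-c'\langle M^F\rangle_t$ rather than $M^F_t-c'[M^F]_t$. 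The paper's stochastic-exponential identities sidestep this entirely and are self-contained. Similarly, in your (c) the phrase ``a localisation applies'' does not obviously control the convergence of $\E_x[L^F_{T_n\wedge t}\log L^F_{T_n\wedge t}]$ to $\E_x[L^F_t\log L^F_t]$; the paper's dual computation avoids this issue because it never leaves the class of true (square-integrable) martingales.
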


The proof of Theorem \ref{t:1} follows directly from our investigations in Section~\ref{sec-2} on purely discontinuous Girsanov transforms. Let us, for completeness, indicate already here how to derive the assertions from these results.
\begin{proof}[Proof of Theorem \ref{t:1}] Parts (a), (\~a) and (b), (\~b) are immediate consequences of Theorem \ref{t:main-1}. Parts (c), (\~c) follow from Proposition \ref{p:entropy} and Remark \ref{r:entropy} (a).
\end{proof}

In the next two theorems we
assume that $X$ is an isotropic $\alpha$-stable L\'evy  process.
\begin{thm}\label{t:2}
    Let $X$ be an isotropic $\alpha$-stable  L\'evy  process in $\R^d$. Assume  that $0 <\alpha< 2\wedge d$, $F\in I_2(X)$ and $\inf_{x,y\in \R^d}F(x,y) >-1$.
    \begin{enumerate}
    \item\label{t:2-a}
        Either $\widetilde{\Pp}_x\perp \Pp_x$ or $\widetilde{\Pp}_x\sim \Pp_x$.
    \item\label{t:2-b}
        If $\widetilde{\Pp}_x\sim\Pp_x$ and if there exist $C>0$ and $\beta>\alpha/2$ such that
        \begin{equation}\label{e:fuchsian-cond}
             0\leq  F(x,y)\le C\frac{|x-y|^{\beta}}{1+|x|^{\beta}+|y|^{\beta}}\quad \text{for all\ }x,y\in\R^d,
        \end{equation}
        then $\sup_{x\in \R^d}\entropy(\Pp_x;\widetilde{\Pp}_x )<\infty$.
    \end{enumerate}
\end{thm}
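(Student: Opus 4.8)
The plan is to treat the two parts separately, deriving both from Theorem \ref{t:1} together with the special structure of the isotropic $\alpha$-stable process. For part \eqref{t:2-a}, the key observation is that the additive functional $S_t := \sum_{s\le t} F^2(X_{s-},X_s)$ is a positive additive functional of $X$, and its a.s.\ finiteness (or not) at $t=\infty$ is a tail event. Because the $\alpha$-stable process (with $0<\alpha<2\wedge d$, so that it is transient and can approach any point) is irreducible in a strong enough sense, a Blumenthal--Getoor $0$--$1$ law argument should show that $\Pp_x(S_\infty<\infty)$ is either $0$ for all $x$ or $1$ for all $x$; the same holds under $\widetilde{\Pp}_x$ since the two measures are locally equivalent and $\widetilde{X}$ is again strong Markov. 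Combined with Theorem \ref{t:1}(a),(b),(\~b): if $S_\infty=\infty$ a.s.\ (under either, hence both, measures) we get $\widetilde{\Pp}_x\perp\Pp_x$; if $S_\infty<\infty$ a.s.\ under both, we get $\widetilde{\Pp}_x\ll\Pp_x$ and $\Pp_x\ll\widetilde{\Pp}_x$, i.e.\ $\widetilde{\Pp}_x\sim\Pp_x$. This dichotomy is exactly \eqref{t:2-a}. The one subtlety here is making sure the "a.s.\ under $\Pp_x$" and "a.s.\ under $\widetilde{\Pp}_x$" versions really do coincide in the non-degenerate case; this follows because local equivalence forces $S_\infty<\infty$ $\Pp_x$-a.s.\ $\iff$ $S_\infty<\infty$ $\widetilde{\Pp}_x$-a.s.\ once we know $\widetilde{\Pp}_x\ll\Pp_x$ and $\Pp_x\ll\widetilde{\Pp}_x$, which is where part (a) of Theorem \ref{t:1} feeds back in.

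For part \eqref{t:2-b}, by Theorem \ref{t:1}(\~c) it suffices to show that
$$
    \E_x\Big[\sum_{t>0}F^2(X_{t-},X_t)\Big]
    = \E_x\Big[\int_0^\infty \!\!\int_{\R^d} F^2(X_s, X_s+h)\,\nu(dh)\,ds\Big]
    = \int_{\R^d} G(x,y)\,\Phi(y)\,dy
$$
is bounded in $x$, where $\nu(dh) = c_{d,\alpha}|h|^{-d-\alpha}\,dh$ is the $\alpha$-stable Lévy measure, $G(x,y)=c|x-y|^{\alpha-d}$ is the Green function, and $\Phi(y):=\int_{\R^d}F^2(y,y+h)\,\nu(dh)$. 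The first equality is the compensation formula (the Lévy system of $X$), and the second uses that $\E_x\int_0^\infty g(X_s)\,ds = \int G(x,y)g(y)\,dy$. So the task reduces to the deterministic estimate $\sup_x \int_{\R^d} |x-y|^{\alpha-d}\Phi(y)\,dy < \infty$. Here the Fuchsian-type hypothesis \eqref{e:fuchsian-cond} enters: it gives
$$
    F^2(y,y+h) \le C^2\,\frac{|h|^{2\beta}}{(1+|y|^\beta+|y+h|^\beta)^2},
$$
and plugging this into the $h$-integral against $|h|^{-d-\alpha}\,dh$ one has to control $\Phi(y)$. For $|h|$ small the integrand behaves like $|h|^{2\beta-d-\alpha}$, which is integrable near $0$ precisely because $2\beta>\alpha$; for $|h|$ large the denominator $(1+|y|^\beta+|y+h|^\beta)^2 \gtrsim |h|^{2\beta}$ cancels the numerator and leaves $|h|^{-d-\alpha}$, which is integrable at infinity. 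Tracking the $|y|$-dependence, this should yield $\Phi(y)\le C'(1+|y|)^{-\alpha-\varepsilon}$ for a suitable $\varepsilon>0$ (coming from the gap $2\beta-\alpha>0$), or at least $\Phi(y)\lesssim (1+|y|)^{-\alpha}$ up to logarithmic corrections.

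The final step is then a standard potential-theoretic estimate: show $\sup_x\int_{\R^d}|x-y|^{\alpha-d}(1+|y|)^{-\alpha-\varepsilon}\,dy<\infty$. One splits the $y$-integral into the region near $x$ (where $|x-y|^{\alpha-d}$ is singular but integrable since $\alpha-d>-d$, and $(1+|y|)^{-\alpha-\varepsilon}$ is bounded), the region of intermediate $|y|$, and the region $|y|\to\infty$ (where $|x-y|^{\alpha-d}$ is bounded by a constant depending on the region and $(1+|y|)^{-\alpha-\varepsilon}$ decays fast enough to be integrated over $\R^d$, or more carefully where $|y|\ge 2|x|$ so $|x-y|\asymp|y|$ and the integrand is $\asymp |y|^{-d-\varepsilon}$). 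Uniformity in $x$ is the point, and it holds because all the bounds are in terms of $|x-y|$ and the decay rate of $\Phi$, not the location of $x$. I expect the main obstacle to be the sharp bookkeeping in the bound for $\Phi(y)$: one must verify that the threshold $\beta>\alpha/2$ is exactly what makes the small-$h$ part of the Lévy integral converge while still extracting enough spatial decay in $y$ to run the Green-function estimate uniformly; getting a clean power $(1+|y|)^{-\alpha-\varepsilon}$ rather than a borderline $(1+|y|)^{-\alpha}$ (which would only give a logarithmically divergent potential) is the delicate quantitative point, and may require a slightly more careful case analysis splitting on whether $|h|\lesssim |y|$ or $|h|\gtrsim|y|$.
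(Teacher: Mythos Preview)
Your plan for part \ref{t:2-a} is essentially the paper's: the event $\{\sum_{t>0}F^2(X_{t-},X_t)<\infty\}$ is shift-invariant, so a $0$--$1$ law gives the dichotomy via Theorem~\ref{t:1}. One caveat: ``Blumenthal--Getoor'' is the wrong name (that is for germ events); what you need is triviality of the \emph{invariant} $\sigma$-field. For $\Pp_x$ this is classical (the stable process has no non-constant bounded harmonic functions), but for $\widetilde{\Pp}_x$ your justification ``locally equivalent and strong Markov'' is not enough---local equivalence says nothing about tail behaviour. The paper handles this by showing, via the Green-function comparability $c^{-1}G\le\widetilde G\le cG$ and a Martin boundary argument, that the minimal Martin boundary of $\widetilde X$ is a single point, whence bounded $\widetilde X$-harmonic functions are constant and the invariant field is $\widetilde{\Pp}_x$-trivial.

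Your approach to part \ref{t:2-b} has a genuine gap: the direct potential estimate you propose \emph{fails}. You correctly compute that the small-$h$ integrability needs $2\beta>\alpha$, but your hope that $\Phi(y)\lesssim(1+|y|)^{-\alpha-\varepsilon}$ is too optimistic. In fact, for $F$ saturating the bound \eqref{e:fuchsian-cond} one has $\Phi(y)\asymp(1+|y|)^{-\alpha}$: on the shell $|h|\asymp|y|$ the ratio $|h|^{2\beta}/(1+|y|^\beta+|y+h|^\beta)^2$ is of order one, and integrating $|h|^{-d-\alpha}$ over that shell already contributes $c|y|^{-\alpha}$. Then $\int|x-y|^{\alpha-d}(1+|y|)^{-\alpha}\,dy$ diverges logarithmically (the integrand is $\asymp|y|^{-d}$ for $|y|\gg|x|$). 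This is precisely the content of Remark~\ref{r:infinite-expectation}: the Fuchsian bound alone does \emph{not} force $\E_x\big[\sum_{t>0}F^2(X_{t-},X_t)\big]<\infty$. You identified this as ``the delicate quantitative point,'' and indeed it cannot be resolved by sharper bookkeeping.

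The paper's route is fundamentally different and uses the hypothesis $\widetilde{\Pp}_x\sim\Pp_x$ in an essential way. By Theorem~\ref{t:1} this gives $\sum_{t>0}F^2(X_{t-},X_t)<\infty$ $\Pp_x$-a.s., and then Theorem~\ref{t:finite-expectation} upgrades a.s.\ finiteness to uniform finiteness of the expectation. That theorem is not a potential estimate: it studies $u(x):=\E_x[e^{-A_\infty}]$, shows $u$ is $F^2$-harmonic, proves a Harnack inequality (Theorem~\ref{t:hi}) for such functions that is \emph{scale-invariant} thanks to the Fuchsian condition, and combines this with $\lim_{t\to\infty}u(X_t)=1$ a.s.\ and a recurrence property of annuli to conclude $\inf_x u(x)>0$, which by Proposition~\ref{p:u-6} yields $\sup_x\E_x[A_\infty]<\infty$. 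The a.s.\ finiteness is the input without which the conclusion is simply false.
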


Note that contrary to \cite[Theorem 2]{B-AP}, we do not need a Fuchsian-type condition \eqref{e:fuchsian-cond} to conclude that dichotomy in part \ref{t:2-a} of Theorem \ref{t:2}  holds true. On the other hand, the next theorem shows that \eqref{e:fuchsian-cond} is needed for part \ref{t:2-b}.
\begin{thm}\label{t:3}
    Let $X$ be an isotropic $\alpha$-stable L\'evy  process in $\R^d$, $0<\alpha<2\wedge d$. For each $\gamma$ and $\beta$ satisfying $0<\gamma<\alpha/2 <\beta$  there exists some $F\in I_2(X)$ satisfying
    $$
        F(x,y)\le \frac12 \frac{|x-y|^{\beta}}{1+|x|^{\gamma}+|y|^{\gamma}}
    $$
    such that $\Pp_x\ll \widetilde{\Pp}_x$ and $\entropy(\Pp_x; \widetilde{\Pp}_x)=\infty$.
\end{thm}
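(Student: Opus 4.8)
The claim is a sharpness/counterexample result: for $\gamma < \alpha/2 < \beta$ we need an explicit $F \in I_2(X)$ bounded above by $\tfrac12 |x-y|^\beta/(1+|x|^\gamma+|y|^\gamma)$, such that $\Pp_x \ll \widetilde\Pp_x$ but $\entropy(\Pp_x;\widetilde\Pp_x) = \infty$. By Theorem~\ref{t:1}(\~b) and (\~c), this amounts to exhibiting $F$ with
$$
\E_x\Big[\sum_{t>0} F^2(X_{t-},X_t)\Big] = \infty
\quad\text{but}\quad
\sum_{t>0} F^2(X_{t-},X_t) < \infty \ \ \Pp_x\text{-a.s.}
$$
So the whole theorem is really an instance of the "a.s. finite but infinite expectation" phenomenon that the paper is about — I should look for a machinery result in Section 2/3 producing such additive functionals and then feed it a cleverage function of the prescribed decay.

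Let me think about what $F$ to pick.

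The plan is to use Theorem~\ref{t:1} to turn the statement into a purely probabilistic one about an additive functional, and then to build $F$ by hand. By parts (\~{b}) and (\~{c}) of Theorem~\ref{t:1} it suffices to produce a bounded, symmetric $F:\R^d\times\R^d\to[0,\infty)$ (so automatically $\inf F=0>-1$) with $F\in I_2(X)$, satisfying the stated pointwise bound, and such that
\[
    A_\infty:=\sum_{t>0}F^2(X_{t-},X_t)
\]
is $\Pp_x$-a.s.\ finite while $\E_x A_\infty=\infty$. Recall here that, by the Lévy system of the isotropic stable process, $\E_x A_\infty=\int_{\R^d}G(x,z)\,h(z)\,dz$ with $h(z)=c_{d,\alpha}\int_{\R^d}F^2(z,y)\,|z-y|^{-d-\alpha}\,dy$ and $G(x,z)\asymp|x-z|^{\alpha-d}$ the Green function. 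Thus the theorem is really an instance of the ``a.s.\ finite, infinite expectation'' phenomenon, which I would realise through a Borel--Cantelli mechanism.

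I would concentrate $F$ on a sequence of balls escaping to infinity whose radii shrink relative to their distance from the origin. Fix $r_n=2^n$, choose $\xi_n$ with $|\xi_n|=r_n$, and put $B_n=B(\xi_n,\rho_n)$ with $\rho_n=r_n^{\tau}$, where $\tau$ is any number with
\[
    \max\Big\{\tfrac{\gamma}{\beta},\ 1-\tfrac{\alpha}{d}\big(1-\tfrac{\gamma}{\beta}\big)\Big\}<\tau<1 ;
\]
such $\tau$ exists precisely because $\gamma<\beta$ and $\alpha<d$. Starting from an index $n_0$ so large that the balls $B_n$, $n\ge n_0$, are pairwise so far apart that no jump of $X$ can lead from one of them into another, set
\[
    F(x,y):=\sum_{n\ge n_0} a_n\,|x-y|^{\beta}\,\mathbf 1_{B_n}(x)\,\mathbf 1_{B_n}(y)\,\mathbf 1_{\{|x-y|\le\ell_n\}},
\]
with $a_n:=\big(2(1+2(r_n+\rho_n)^{\gamma})\big)^{-1}\asymp r_n^{-\gamma}$ and $\ell_n:=a_n^{-1/\beta}\asymp r_n^{\gamma/\beta}$; note $\ell_n<\rho_n$ for $n\ge n_0$ since $\gamma/\beta<\tau$. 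Then $F$ is symmetric; the choice of $a_n$ gives $F(x,y)\le\tfrac12|x-y|^{\beta}/(1+|x|^{\gamma}+|y|^{\gamma})$ (first for $x,y\in B_n$, hence for all $x,y$); and the choice of $\ell_n$ gives $0\le F\le a_n\ell_n^{\beta}=1$, so $F$ is bounded.

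Three checks then remain, each routine given the standard potential theory of the stable process. \emph{(i)} $F\in I_2(X)$: since $h\le\sum_n c_n\mathbf 1_{B_n}$ with $c_n\asymp a_n^2\ell_n^{2\beta-\alpha}\asymp r_n^{-\alpha\gamma/\beta}\to 0$, the rate function $h$ is bounded, so $\E_x[\sum_{s\le t}F^2(X_{s-},X_s)]=\int_0^t\E_x h(X_s)\,ds\le t\,\|h\|_\infty<\infty$ for all $x,t$. \emph{(ii)} $\E_x A_\infty=\infty$: on the core $B(\xi_n,\rho_n-\ell_n)$ one has $h\equiv c_n$, so
\[
    \E_x A_\infty=\int_{\R^d}G(x,z)h(z)\,dz\ \gtrsim\ \sum_{n\ge n_0} c_n\,\big|B(\xi_n,\rho_n-\ell_n)\big|\,\inf_{B_n}G(x,\cdot)\ \asymp\ \sum_{n\ge n_0} r_n^{\,d(\tau-1)+\alpha(1-\gamma/\beta)},
\]
whose exponent is strictly positive by the lower bound on $\tau$, so the series diverges. \emph{(iii)} $A_\infty<\infty$ $\Pp_x$-a.s.: by the separation of the $B_n$ one has $A_\infty=\sum_{n\ge n_0}A^{(n)}$ with $A^{(n)}=a_n^2\sum_{t:\,X_{t-}\in B_n}|X_{t-}-X_t|^{2\beta}\mathbf 1_{\{|X_{t-}-X_t|\le\ell_n\}}$; each $A^{(n)}$ is a.s.\ finite (the occupation time of the bounded set $B_n$ is a.s.\ finite by transience, and $\sum_{s\le T}|X_{s-}-X_s|^{2\beta}<\infty$ a.s.\ for every $T$ since $2\beta>\alpha$, plus a.s.\ finitely many large jumps) and vanishes unless $X$ enters $B_n$; by the hitting-probability estimate $\Pp_x(X\text{ enters }B_n)\asymp(\rho_n/r_n)^{d-\alpha}=r_n^{(\tau-1)(d-\alpha)}$, which is summable because $\tau<1$, Borel--Cantelli shows that $X$ enters only finitely many $B_n$, whence $A_\infty<\infty$ $\Pp_x$-a.s. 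Combining (i)--(iii) with Theorem~\ref{t:1} gives $\Pp_x\ll\widetilde{\Pp}_x$ and $\entropy(\Pp_x;\widetilde{\Pp}_x)=\infty$.

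The design decision one has to discover is the jump cut-off $\ell_n$: concentrating $F$ on escaping balls is natural, but without restricting the jump size one cannot keep $F$ bounded while still making the Green potential of $h$ diverge, and the scaling $\ell_n\asymp a_n^{-1/\beta}$ is forced by $\|F\|_\infty<\infty$. Once the scales are set, the only real work is bookkeeping --- verifying that the four exponent constraints (boundedness of $F$, $\ell_n<\rho_n$, divergence of the Green-potential series, summability of the hitting probabilities) can be met simultaneously, which is exactly what dictates the admissible window for $\tau$ and uses $\gamma<\beta$ and $\alpha<d$. The analytic inputs are all classical for isotropic stable processes: the bound $G(x,z)\asymp|x-z|^{\alpha-d}$, the Riesz-capacity/hitting estimate $\Pp_x(\sigma_{B(\xi,\rho)}<\infty)\asymp(\rho/|x-\xi|)^{d-\alpha}$ for $|x-\xi|\ge 2\rho$, the Lévy system, and the a.s.\ finiteness of $\sum_{s\le T}|X_{s-}-X_s|^{p}$ for $p>\alpha$. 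The one point to watch is that the estimates for $\E_x A_\infty$ and for the hitting probabilities stay uniform in the fixed starting point $x$, which holds since $|x-\xi_n|\asymp r_n$ for all large $n$.
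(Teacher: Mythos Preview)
Your construction is correct and is essentially the paper's own argument: the paper first isolates the key step as Theorem~\ref{t:infinite-expectation}---building a nonnegative $\Phi$ supported on escaping balls $B(x_n,r_n)$ with summable hitting probabilities (Borel--Cantelli gives a.s.\ finiteness) but divergent Green potential (infinite expectation)---and then sets $F=\tfrac18\sqrt{\Phi}$ with parameters $(2\gamma,2\beta)$ in place of $(\gamma,\beta)$; your version differs only in the specific scales (a free exponent $\tau$ and growing cutoff $\ell_n$ versus the paper's fixed cutoff $|y-z|\le 1$ and explicit choice $|x_n|=2^{nd/(\alpha-2\gamma)}$, $r_n=2^{-n}|x_n|+1$). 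One harmless slip: ``no jump of $X$ can lead from one $B_n$ to another'' is false for a stable process, but all you actually need---and use---is that the balls are pairwise disjoint so that the sum defining $F$ has at most one nonzero term.
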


These three theorems on purely discontinuous Girsanov transforms of isotropic stable L\'{e}vy processes are the analogues of Theorems 1--3 from \cite{B-AP}. The main ingredients used in the proof of Theorem \ref{t:1} are in the next section, while the proofs of Theorems \ref{t:2} and \ref{t:3} are given in the last section of this paper, building on the results obtained in earlier sections. In Section \ref{sec-2}, after recalling some necessary definitions and results from \cite{S}, we study absolute continuity and singularity of the measures $\Pp_x$ and $\widetilde{\Pp}_x$ for general strong Markov processes on $\R^d$. Section \ref{sec-3} is devoted to showing that the finiteness of the expectation of an additive functional $(A_t)_{t\ge 0}$ satisfying $A_{\infty}<\infty$ a.s.~is related to the lower boundedness of the function $u(x):=\E_x[e^{-A_{\infty}}]$. The motivation for this section comes from the need to understand the general principle underlying the first part of the proof of \cite[Theorem 2]{B-AP}. As an application, in Example \ref{ex:regular-diffusion} we give an alternative proof of (part of) \cite[Theorem 3]{KSY}. In Section \ref{sec-4} we look more closely in the case of an isotropic stable L\'evy process $X$. Following the ideas from \cite{BB00}, we first prove in Theorem \ref{t:hi} a Harnack inequality for $F$-harmonic functions of $X$---this can be thought of as a Harnack inequality for a Schr\"odinger-type semigroup of $X$. The main result of the section is Theorem \ref{t:finite-expectation} where we show that $\Pp_x$ a.s.~finiteness of $A_{\infty}=\sum_{t>0}F(X_{t-},X_t)$ implies finiteness of the expectation $\E_x A_{\infty}$ under an appropriate Fuchsian-type condition on the function $F$.

\begin{ack}
    The authors thank Jean Jacod for very helpful comments and for supplying the reference \cite[VIII (8.17), (8.18)]{Jac} which improved the presentation of Lemma \ref{l:stochastic-exp}. Part of this work was done while the second-named author was visiting TU Dresden.  Financial support through the Alexander-von-Humboldt foundation and the Croatian Science Foundation under the project no.~3526 is gratefully acknowledged.
\end{ack}

\section{Purely discontinuous Girsanov transforms:\\ Absolute continuity and singularity}\label{sec-2}
We begin with an auxiliary result which relates the finiteness of the quadratic variation of a local martingale with the convergence of the stochastic (Dol\'eans--Dade) exponential. Let $M=(M_t)_{t \ge 0}$, $M_0=0$, be a local martingale on a filtered probability space $(\Omega, \MM, (\MM_t),\Pp)$. As usual, we denote the quadratic variation of $M$ by $[M] = ([M])_{t\geq 0}$ and the predictable quadratic variation (or angle bracket) by $\scalar M = (\scalar M_t)_{t\geq 0}$; the angle bracket is the compensator of the quadratic variation, i.e.\ $[M]-\langle M\rangle$ is a local martingale. Further, $[M]_\infty = \sup_{t>0}[M]_t$, $\scalar M_\infty = \sup_{t>0}\scalar M_t$, and if the superscript `c' denotes the continuous part, then $\scalar{M^c} = [M^c] = [M]^c$ and
$$
    [M]_t
    =[M]^c_t +\sum_{s\le t}(\Delta M_s)^2
    =\scalar{M^c}_t +\sum_{s\le t}(\Delta M_s)^2.
$$
Let $\EE(M)=(\EE(M)_t)_{t\ge 0}$ be the stochastic exponential defined by
$$
    \EE(M)_t=\exp\left(M_t- \frac 12  [M]^c_t\right)\prod_{0<s\le t} (1+\Delta M_s)e^{ -\Delta M_s}.
$$
The stochastic exponential is the unique solution of the SDE $L_t=1+\int_0^t L_{s-}\, dM_s$, hence a local martingale. For these facts we refer to \cite{HWY} or \cite{Pr}.

We will need the following simple observation: Let $(a_n)_{n\ge 1}$ be a sequence of real numbers such that $a_n>-1$. Then
\begin{equation}\label{e:simple-lemma}
    \sum_{n\ge 1}a_n^2 <\infty
    \iff
    \sum_{n\ge 1}\left(\frac{a_n}{1+a_n}\right)^2<\infty
    \iff
    \prod_{n\ge 1}\frac{1+a_n}{\left(1+\frac12 a_n\right)^2}>0.
\end{equation}

\begin{lemma}\label{l:stochastic-exp}
    Let $M$ be a local martingale such that $\Delta M_t >-1$ for all $t>0$.
    \begin{enumerate}
    \item\label{l:stochastic-exp-a} $\{[M]_{\infty}=\infty\}\subset \{\lim_{t\to \infty} \EE(M)_t=0\}$ a.s.

    \item\label{l:stochastic-exp-b} If there exists some $C>0$ such that $\Delta M_t < C$ for all $t>0$, then

    $\{\lim_{t\to \infty} \EE(M)_t=0\} = \{[M]_{\infty}=\infty\}$ a.s.
    \end{enumerate}
\end{lemma}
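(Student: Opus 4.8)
The plan is to analyze the logarithm of the stochastic exponential. Writing $\EE(M)_t = \exp(M_t - \tfrac12[M]^c_t)\prod_{0<s\le t}(1+\Delta M_s)e^{-\Delta M_s}$, we have on the event where the product converges
$$
    \log \EE(M)_t = M_t - \tfrac12[M]^c_t + \sum_{0<s\le t}\bigl(\log(1+\Delta M_s) - \Delta M_s\bigr).
$$
Each term $\log(1+x)-x$ is nonpositive for $x>-1$, and is comparable to $-x^2$ when $x$ stays in a bounded interval bounded away from $-1$; near $x=-1$ it tends to $-\infty$. The key is therefore to control the three pieces separately using the jump-size assumptions and the convergence/divergence of $[M]$.

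For part \eqref{l:stochastic-exp-a}, I would split $[M]_\infty = \infty$ into the (a.s.) union of the two events $\{[M^c]_\infty = \infty\}$ and $\{\sum_{s>0}(\Delta M_s)^2 = \infty\}$. On the first event, since $M$ has a continuous local-martingale part $M^c$ with $\langle M^c\rangle_\infty = [M^c]_\infty = \infty$, the Dambis--Dubins--Schwarz theorem shows $M^c$ is a time-changed Brownian motion run for infinite time, so $\liminf M^c_t = -\infty$; more carefully, on $\{\langle M^c\rangle_\infty=\infty\}$ the standard strong-law-type estimate gives $M^c_t/\langle M^c\rangle_t \to 0$, hence $M^c_t - \tfrac12\langle M^c\rangle_t \to -\infty$. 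Meanwhile the purely discontinuous part contributes $\sum(\log(1+\Delta M_s)-\Delta M_s) \le 0$ plus a martingale term; combining, $\log\EE(M)_t \to -\infty$. On the second event, $\sum(\Delta M_s)^2 = \infty$ forces (via the elementary equivalences in \eqref{e:simple-lemma}, applied to the jumps) either $\sum(\log(1+\Delta M_s)-\Delta M_s) = -\infty$ outright — when jumps are bounded — or, in general, the infinite product $\prod(1+\Delta M_s)e^{-\Delta M_s}$ to vanish, again giving $\EE(M)_t\to 0$. The point needing care is that without an upper bound on jumps one cannot directly say $\log(1+x)-x \asymp -x^2$; here one should instead argue that $\sum(\Delta M_s)^2 = \infty$ combined with $\Delta M_s > -1$ still drives $\prod(1+\Delta M_s)e^{-\Delta M_s}$ to $0$ (large positive jumps make individual factors $(1+x)e^{-x}$ small, so they only help), and separately handle the contribution of $M_t$ itself, which is where an appeal to a quadratic-variation-based convergence criterion for $M$ (e.g.\ $M$ converges a.s.\ on $\{[M]_\infty<\infty\}$, so on the complement one uses the decomposition carefully) is cleanest.

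For part \eqref{l:stochastic-exp-b}, the inclusion $\supset$ is \eqref{l:stochastic-exp-a}, so only $\subset$ remains: assuming $\Delta M_t < C$ and $\Delta M_t > -1$ for all $t$, show $\{[M]_\infty<\infty\} \subset \{\lim_t \EE(M)_t \in (0,\infty)\}$ a.s., i.e.\ the limit is strictly positive. On $\{[M]_\infty<\infty\}$, $M$ is an $L^2$-bounded-increments local martingale in the sense that $[M]_\infty<\infty$, so $M_t$ converges a.s.\ to a finite limit and $[M]^c_\infty<\infty$. It remains to show the product converges to a strictly positive limit; by the third equivalence in \eqref{e:simple-lemma} it suffices that $\sum(\Delta M_s)^2<\infty$, which holds on this event, together with the uniform bounds $-1 < \Delta M_s < C$ that keep each factor $(1+\Delta M_s)e^{-\Delta M_s}$ bounded away from $0$ and $\infty$ once $|\Delta M_s|$ is small (only finitely many jumps can be large in absolute value since $\sum(\Delta M_s)^2<\infty$, and each of those is a single strictly positive finite factor by the two-sided bound). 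Hence $\EE(M)_t$ converges to a strictly positive finite limit, so in particular does not tend to $0$.

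I expect the main obstacle to be part \eqref{l:stochastic-exp-a} in the regime of \emph{unbounded} positive jumps: one must show $\log\EE(M)_t\to-\infty$ using only $\Delta M_t>-1$ and $[M]_\infty=\infty$, where the naive equivalence $\log(1+x)-x\asymp -x^2$ fails. The remedy is to avoid needing a two-sided comparison: since every factor $(1+x)e^{-x}\le 1$ for $x>-1$, the product part is always $\le 1$ and is monotone decreasing in $t$, so it converges (possibly to $0$); one then reduces to showing that either this product limit is $0$ or else $[M^c]_\infty+\sum(\Delta M_s)^2\wedge 1 = \infty$ still forces the continuous part or the ``small-jump'' part of $\log\EE(M)$ to $-\infty$ — and the ``small-jump'' part \emph{is} controlled by $-\sum(\Delta M_s)^2$ up to a martingale, so Lenglart/Hölder-type domination or the martingale convergence criterion closes the gap.
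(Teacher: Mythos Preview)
Your argument for part \ref{l:stochastic-exp-b} is essentially correct: with jumps bounded on both sides, $M$ is a local martingale with bounded jumps, so $M_t$ converges a.s.\ on $\{[M]_\infty<\infty\}$; together with $[M]^c_\infty<\infty$ and $\sum(\Delta M_s)^2<\infty$ this forces $\log\EE(M)_t$ to converge to a finite limit, hence $\EE(M)_\infty>0$. This is a legitimate alternative to the paper's route via the identity $\EE(M)\EE(-M)=\EE(-[M])$.

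The real problem is part \ref{l:stochastic-exp-a}. You write
\[
    \log\EE(M)_t = M_t - \tfrac12[M]^c_t + \sum_{s\le t}\bigl(\log(1+\Delta M_s)-\Delta M_s\bigr)
\]
and then try to show this tends to $-\infty$ on $\{[M]_\infty=\infty\}$. The last sum is indeed $\le 0$ and (as you note) tends to $-\infty$ when $\sum(\Delta M_s)^2=\infty$. But the term $M_t$ is \emph{not} controlled on $\{[M]_\infty=\infty\}$: nothing prevents $M_t$ from being large and positive, possibly offsetting the other terms. In your Case~1 you simply say ``plus a martingale term; combining, $\log\EE(M)_t\to-\infty$'', and in Case~2 you assert that the product going to $0$ gives $\EE(M)_t\to 0$---but $\EE(M)_t = e^{M_t-\frac12[M]^c_t}\cdot(\text{product})$, and a vanishing product does not by itself force the left side to $0$ if the exponential factor blows up. Your concluding paragraph identifies this as ``the main obstacle'' but the proposed fixes (Lenglart domination, splitting into small and large jumps) remain programmatic; without an upper bound on the jumps there is no straightforward way to control $M_t$ along these lines.

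The paper sidesteps this entirely with a trick you are missing: compare $\EE(M)$ not with $M$ but with $\EE(\tfrac12 M)$. Since $\Delta(\tfrac12 M)>-\tfrac12$, the process $\EE(\tfrac12 M)$ is a \emph{positive} local martingale, hence a supermartingale, hence converges a.s.\ to a finite limit. A direct computation gives
\[
    \EE(M)_t \;=\; \EE(\tfrac12 M)_t^2 \cdot \exp\!\Bigl(-\tfrac14[M]^c_t\Bigr)\cdot \prod_{s\le t}\frac{1+\Delta M_s}{(1+\tfrac12\Delta M_s)^2},
\]
and the crucial point is that the troublesome $M_t$ has \emph{cancelled}. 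The first factor is bounded (it converges); the remaining two factors each lie in $(0,1]$, and on $\{[M]_\infty=\infty\}$ at least one of them tends to $0$ (the second if $[M]^c_\infty=\infty$; the third if $\sum(\Delta M_s)^2=\infty$, by the elementary equivalence \eqref{e:simple-lemma}). This gives $\EE(M)_t\to 0$ with no need to analyse $M_t$ at all.
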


\begin{remark}
    The a.s.\ equality $\left\{\lim_{t\to \infty} \EE(M)_t=0\right\}=\left\{\langle M \rangle_{\infty}=\infty\right\}$ is well known for \emph{continuous} local martingales, cf.~\cite[Exercise IV-3.25]{RY}; Lemma \ref{l:stochastic-exp} extends this to martingales with jumps.

    A proof of Lemma \ref{l:stochastic-exp}\ref{l:stochastic-exp-b} can be easily deduced from \cite[Corollaire VIII (8.17)]{Jac}, while part \ref{l:stochastic-exp-a} is explicitly stated in \cite[Corollaire VIII (8.18)]{Jac}.  Below we give an elementary proof of part \ref{l:stochastic-exp-a} and prove part \ref{l:stochastic-exp-b} under the additional assumption that $\Delta M_t <1$ for all $t>0$.
\end{remark}

\begin{proof}[Proof of Lemma \ref{l:stochastic-exp}]
\ref{l:stochastic-exp-a} Consider the local martingale $\frac 12 M$. Then
$$
    \EE\left(\tfrac{1}{2}M\right)_t=\exp\left(\tfrac12 M_t-\tfrac18 [M]^c_t\right)\prod_{s\le t} \left(1+\tfrac 12 \Delta M_s\right)e^{-\frac12 \Delta M_s}.
$$
Since $\Delta\big(\frac 12M\big)_t = \frac 12\Delta M_t > -\frac 12$, $\EE\big(\frac 12M\big)$ is a positive local martingale, hence a supermartingale, and therefore the $\lim_{t\to \infty}\EE\big(\frac 12M\big)_t$ exists a.s. Furthermore,
\begin{align*}
    \EE\left(\tfrac{1}{2}M\right)_t^2
    &= \exp\left(M_t-\tfrac14 [M]^c_t\right)\prod_{s\le t} \left(1+\tfrac 12 \Delta M_s\right)^2 e^{- \Delta M_s}\\
    &= \EE(M)_t \exp\left(\tfrac14 [M]^c_t\right)\prod_{s\le t} \frac{\left(1+\frac 12\Delta M_s\right)^2}{1+\Delta M_s},
\end{align*}
which can be rearranged as
\begin{equation}\label{e:stochastic-exp-1}
    \EE(M)_t
    =\EE\left(\tfrac 12 M\right)_t^2 \exp\left(-\tfrac14 [M]^c_t\right)\prod_{s\le t} \frac{1+\Delta M_s}{\left(1+\frac 12\Delta M_s\right)^2}.
\end{equation}
Since $[M]_t=[M]^c_t +\sum_{s\le t}(\Delta M_s)^2$, we find
$$
    \big\{[M]_{\infty}=\infty\big\}
    \subset \big\{[M]^c_{\infty}=\infty\big\} \cup \Big\{\sum\nolimits_{t > 0}(\Delta M_t)^2=\infty\Big\}.
$$
Note that $0<(1+x)/(1+ \frac 12 x )^2 \le 1$ for all $x\ge -1$ implying that the product appearing in \eqref{e:stochastic-exp-1} stays bounded between $0$ and $1$; moreover, $\EE(\frac 12M)^2_t$ is bounded since it converges as $t\to\infty$. Now there are two possibilities:
\begin{enumerate}
\item[(i)] $[M]^c_{\infty}=\infty$, then the right-hand side of \eqref{e:stochastic-exp-1} tends to zero;

\item[(ii)] $\sum_{t\ge 0} (\Delta M_t)^2 =\infty$, then by \eqref{e:simple-lemma},
    $
        \prod_{t\ge 0} \frac{1+\Delta M_t}{\left(1+\frac 12 \Delta M_t\right)^2}=0
    $,
and the right-hand side of \eqref{e:stochastic-exp-1} tends to zero;
\end{enumerate}
in both cases, $\lim_{t\to \infty}\EE(M)_t=0$.

\medskip\noindent
\ref{l:stochastic-exp-b} Assume that $\Delta M_t<1$ for all $t>0$ and $\lim_{t\to \infty}\EE(M)_t=0$. From $\EE(X)\EE(Y)=\EE(X+Y+[X,Y])$ we get that $\EE(M)\EE(-M)=\EE(-[M])$. Because of $\Delta M_t <1$, $\EE(-M)$ is a nonnegative local martingale, hence a nonnegative supermartingale which is a.s.\ convergent as $t\to \infty$.  Thus, on the set $\{\lim_{t\to \infty}\EE(M)_t=0\}$ we see that the product $\EE(M)_t\EE(-M)_t$ converges to $0$ a.s., hence, $\lim_{t\to \infty}\EE(-[M])_t=0$ a.s. From
\begin{align*}
    \EE(-[M])_t
    &= \exp(-[M]_t)\prod_{s\le t}(1-\Delta [M]_s)e^{\Delta [M]_s}\\
    &= \exp\bigg(-[M]^c_t-\sum_{s\le t}\Delta[M]_s\bigg)\prod_{s\le t}(1-\Delta [M]_s)e^{\Delta [M]_s}\\
    &= \exp (-[M]^c_t)\prod_{s\le t} (1-(\Delta M_s)^2)
\end{align*}
we conclude that $[M]^c_{\infty}=\infty$ or $\prod_{t> 0} (1-(\Delta M_t)^2)=0$ holds true. The latter is equivalent to $\sum_{t> 0}(\Delta M_t)^2=\infty$. Thus we have $[M]_{\infty}=[M]^c_{\infty}+\sum_{t\ge 0}(\Delta M_t)^2=\infty$ on the set $\{\lim_{t\to \infty}\EE(M)_t=0\}$.
\end{proof}

A version of the following lemma is stated and proved in \cite[Lemma 1]{B-AP}.
\begin{lemma}\label{l:from-B-AP}
Let $(\Omega, \MM)$ be a measurable space and $(\MM_t)_{t\ge 0}$ a filtration such that $\MM=\sigma\left(\bigcup_{t\ge 0} \MM_t\right)$. Let $\Pp$ and $\widetilde{\Pp}$ be probability measures on $(\Omega, \MM)$ such that $\widetilde{\Pp}_{|\MM_t}\ll \Pp_{|\MM_t}$ for all $t\ge 0$. Then
\begin{enumerate}
\item\label{l:from-B-AP-a}
    $\widetilde{\Pp}\ll \Pp
    \iff \widetilde\Pp\Big(\limsup\limits_{t\to \infty} \frac{d\widetilde{\Pp}_{|\MM_t}}{d\Pp_{|\MM_t}}<\infty\Big) = 1$.

\item\label{l:from-B-AP-b}
    $\widetilde{\Pp}\perp \Pp
    \iff \widetilde\Pp\Big(\limsup\limits_{t\to \infty} \frac{d\widetilde{\Pp}_{|\MM_t}}{d\Pp_{|\MM_t}}=\infty\Big)=1
    \iff \Pp\Big(\limsup\limits_{t\to \infty} \frac{d\widetilde{\Pp}_{|\MM_t}}{d\Pp_{|\MM_t}}=0\Big)=1$.

\item\label{l:from-B-AP-c}
    $\widetilde{\Pp}\ll \Pp
    \implies \entropy(\widetilde{\Pp}; \Pp)=\lim_{t\to \infty} \entropy(\widetilde{\Pp}_{|\MM_t}; \Pp_{|\MM_t})$.
\end{enumerate}
\end{lemma}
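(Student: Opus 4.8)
The plan is to run the classical comparison-along-a-filtration argument via the Radon--Nikod\'ym martingale. I would first introduce the density process $Z_t:=d\widetilde{\Pp}_{|\MM_t}/d\Pp_{|\MM_t}$, which exists for every $t$ by hypothesis and is a nonnegative $(\MM_t)$-martingale under $\Pp$ (take a right-continuous version). To treat \ref{l:from-B-AP-a} and \ref{l:from-B-AP-b} at one stroke I would pass to the reference measure $\Q:=\tfrac12(\Pp+\widetilde{\Pp})$, so that $\Pp\ll\Q$ and $\widetilde{\Pp}\ll\Q$, and consider the density processes $X_t:=d\Pp_{|\MM_t}/d\Q_{|\MM_t}$ and $Y_t:=d\widetilde{\Pp}_{|\MM_t}/d\Q_{|\MM_t}$, which satisfy $X_t+Y_t=2$ $\Q$-a.s. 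Being bounded by $2$, these are uniformly integrable $\Q$-martingales, hence converge $\Q$-a.s.\ and in $L^1(\Q)$; since $\MM=\sigma(\bigcup_t\MM_t)$, a monotone-class argument identifies the limits as $X_\infty=d\Pp/d\Q$ and $Y_\infty=d\widetilde{\Pp}/d\Q$ on all of $\MM$, with $X_\infty+Y_\infty=2$ $\Q$-a.s.

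Next I would collect the facts tying $Z_t$ to $X_\infty$. For each $t$ the set $\{X_t=0\}$ is $\Pp_{|\MM_t}$-null (as $\Pp(X_t=0)=\int_{\{X_t=0\}}X_t\,d\Q=0$), hence $\widetilde{\Pp}_{|\MM_t}$-null, so $X_t>0$ holds $\Pp$- and $\widetilde{\Pp}$-a.s.\ and $Z_t=Y_t/X_t$ under either measure. Since $\widetilde{\Pp}\ll\Q$, under $\widetilde{\Pp}$ we get $Z_t\to Y_\infty/X_\infty$ in $[0,\infty]$ (with the convention $a/0=\infty$ for $a>0$); as $Y_\infty=2-X_\infty=2$ on $\{X_\infty=0\}$, this yields, $\widetilde{\Pp}$-a.s., $\limsup_{t\to\infty}Z_t=\lim_{t\to\infty}Z_t$ together with $\{\lim_t Z_t=\infty\}=\{X_\infty=0\}$ and $\{\lim_t Z_t<\infty\}=\{X_\infty>0\}$; under $\Pp$ the same reasoning (with $X_\infty>0$ $\Pp$-a.s.) gives $\limsup_{t\to\infty}Z_t=\lim_{t\to\infty}Z_t=Y_\infty/X_\infty$ $\Pp$-a.s., which vanishes precisely on $\{Y_\infty=0\}$. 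Then I would write down the Lebesgue decomposition of $\widetilde{\Pp}$ with respect to $\Pp$ as $\widetilde{\Pp}=\widetilde{\Pp}_a+\widetilde{\Pp}_s$ with $\widetilde{\Pp}_a(A)=\int_A\I_{\{X_\infty>0\}}Y_\infty\,d\Q$ and $\widetilde{\Pp}_s(A)=\int_A\I_{\{X_\infty=0\}}Y_\infty\,d\Q$, checking that $\widetilde{\Pp}_s$ is carried by the $\Pp$-null set $\{X_\infty=0\}$ and that $\widetilde{\Pp}_a\ll\Pp$ because $\Pp(A)=0$ forces $X_\infty=0$ $\Q$-a.e.\ on $A$. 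With these in hand, \ref{l:from-B-AP-a} is $\widetilde{\Pp}\ll\Pp\iff\widetilde{\Pp}_s\equiv0\iff\widetilde{\Pp}(X_\infty=0)=0\iff\widetilde{\Pp}(\limsup_t Z_t<\infty)=1$, and the first two conditions of \ref{l:from-B-AP-b} follow from $\widetilde{\Pp}\perp\Pp\iff\widetilde{\Pp}_a\equiv0\iff\widetilde{\Pp}(X_\infty>0)=0\iff\widetilde{\Pp}(\limsup_t Z_t=\infty)=1$. For the last equivalence in \ref{l:from-B-AP-b} I would note that $\widetilde{\Pp}\perp\Pp$ forces $X_\infty\in\{0,2\}$ $\Q$-a.s.\ with $\Q(X_\infty=2)=\Q(X_\infty=0)=\tfrac12$, while conversely $\Pp(Y_\infty=0)=1$ gives $\Q(X_\infty=2)=\tfrac12$, which together with $\E_\Q X_\infty=1$ forces $\Q(X_\infty=0)=\tfrac12$, i.e.\ $\widetilde{\Pp}(X_\infty=0)=1$; hence $\Pp(\limsup_t Z_t=0)=\Pp(Y_\infty=0)=1\iff\widetilde{\Pp}\perp\Pp$.

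For \ref{l:from-B-AP-c} I would use the monotonicity of relative entropy under coarsening of the $\sigma$-algebra: if $\nu\ll\mu$ on $\MM$ and $\mathcal{G}\subseteq\MM$, then $d\nu_{|\mathcal{G}}/d\mu_{|\mathcal{G}}=\E_\mu[\tfrac{d\nu}{d\mu}\mid\mathcal{G}]$ and conditional Jensen for the convex function $\phi(x)=x\log x$ gives $\entropy(\nu_{|\mathcal{G}};\mu_{|\mathcal{G}})=\E_\mu[\phi(\E_\mu[\tfrac{d\nu}{d\mu}\mid\mathcal{G}])]\le\E_\mu[\phi(\tfrac{d\nu}{d\mu})]=\entropy(\nu;\mu)$. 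Applying this to $\MM_s\subseteq\MM_t\subseteq\MM$ shows that $t\mapsto\entropy(\widetilde{\Pp}_{|\MM_t};\Pp_{|\MM_t})$ is nondecreasing and bounded above by $\entropy(\widetilde{\Pp};\Pp)$, so the limit exists in $[0,\infty]$ and is $\le\entropy(\widetilde{\Pp};\Pp)$. For the reverse inequality, since $\widetilde{\Pp}\ll\Pp$ we have $Z_t\to Z_\infty:=d\widetilde{\Pp}/d\Pp$ $\Pp$-a.s., and $\entropy(\widetilde{\Pp}_{|\MM_t};\Pp_{|\MM_t})=\E_\Pp[\phi(Z_t)]$ with $\phi\ge-e^{-1}$; Fatou's lemma applied to $\phi(Z_t)+e^{-1}\ge0$ yields $\entropy(\widetilde{\Pp};\Pp)=\E_\Pp[\phi(Z_\infty)]\le\liminf_t\E_\Pp[\phi(Z_t)]$, which is the limit just computed.

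The genuinely delicate point is the identification of the Lebesgue decomposition of $\widetilde{\Pp}$ relative to $\Pp$ together with the fact that, under $\widetilde{\Pp}$, the event $\{\lim_t Z_t=\infty\}$ coincides $\widetilde{\Pp}$-a.s.\ with $\{X_\infty=0\}$, the support of the singular part --- in other words, that the singular part of $\widetilde{\Pp}$ lives exactly where the density process explodes. This is where I expect the real work to lie; everything else is routine manipulation of the reference measure $\Q$ and standard uniformly integrable martingale convergence.
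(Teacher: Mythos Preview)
The paper does not actually prove this lemma: it only states it, prefaced by the remark that ``a version of the following lemma is stated and proved in \cite[Lemma~1]{B-AP}.'' So there is nothing to compare against beyond the citation.

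Your proposal is a correct, self-contained proof and follows the classical route one would expect (and which underlies the cited reference): pass to the dominating measure $\Q=\tfrac12(\Pp+\widetilde\Pp)$, use that the bounded $\Q$-martingales $X_t=d\Pp_{|\MM_t}/d\Q_{|\MM_t}$ and $Y_t=2-X_t$ are uniformly integrable and hence converge to the full densities $d\Pp/d\Q$, $d\widetilde\Pp/d\Q$ on $\MM=\sigma(\bigcup_t\MM_t)$, read off the Lebesgue decomposition of $\widetilde\Pp$ with respect to $\Pp$ from $\{X_\infty=0\}$ versus $\{X_\infty>0\}$, and translate these events back to the behaviour of $Z_t=Y_t/X_t$. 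Your handling of the third equivalence in \ref{l:from-B-AP-b} via $\E_\Q X_\infty=1$ is fine, and the entropy argument in \ref{l:from-B-AP-c} (monotonicity by conditional Jensen for $\phi(x)=x\log x$, then Fatou with the lower bound $\phi\ge -e^{-1}$) is the standard one. One cosmetic point: you need not invoke a right-continuous modification of $Z_t$, since the statement is phrased with $\limsup$ and your argument ultimately identifies $\limsup_t Z_t$ with $Y_\infty/X_\infty$ via convergence along $t\to\infty$ under $\Q$; this avoids any appeal to the usual conditions on $(\MM_t)$, which are not assumed here.
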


In the remainder of this section we adopt the setting of \cite{S} with the simplification that the state space is $\R^d$ and the process has infinite lifetime: Let $X=(\Omega, \MM, \MM_t, \theta_t, X_t, \Pp_x)$ be a symmetric (w.r.t.\ Lebesgue measure) right Markov process on $\R^d$ with infinite lifetime. We will always work with the canonical representation of $X$, i.e.\ $\Omega=D([0,\infty),\R^d)$ is the Skorokhod space of all c\`adl\`ag functions $\omega:[0,\infty)\to\R^d$, $X_t$ is the coordinate projection $X_t(\omega)=\omega(t)$ and $\MM=\sigma\left(\cup_{t\ge 0}\MM_t\right)$. Under $\Pp_x$, $X$ is a strong Markov process with initial condition $X_0=x$. The shift operators $\theta_t$, $t\ge 0$, satisfy $X_s\circ \theta_t=X_{s+t}$ for all $t,s\ge 0$. By $(N, H)$ we denote the  L\'evy system of $X$. This means that $H=(H_t)_{t\ge 0}$ is a positive continuous additive functional of $X$ with bounded $1$-potential and $N(x,dy)$ is a kernel from $(\R^d, \mathcal B(\R^d))$ to $(\R^d, \mathcal B(\R^d))$ satisfying $N(x,\{x\})=0$ for all $x\in \R^d$ and
$$
    \E_x\bigg[\sum_{s\le t}f(X_{s-}, X_s)\bigg]
    =\E_x\left[\int_0^t \int_{\R^d}f(X_{s-}, y)N(X_{s-}, dy)\, dH_S\right],\quad x\in\R^d,
$$
for any non-negative Borel function $f$ on $\R^d\times \R^d$ vanishing on the diagonal.

\begin{defn}[\cite{CS03,S}]\label{d:i2x}
    \begin{enumerate}
    \item\label{d:i2x-a}
    The class $J(X)$ consists of all bounded, symmetric functions $F:\R^d\times\R^d\to\R$ which vanish on the diagonal and satisfy
    $$
        \lim_{t\to 0} \sup_{x\in \R^d} \E_x\left[\int_0^t\int_{\R^d} |F(X_{s-},y)| N(X_{s-},dy)\,  dH_s\right] = 0.
    $$

    \item\label{d:i2x-b}
    The class $I_2(X)$ consists of all bounded, symmetric functions $F:\R^d\times\R^d\to\R$ which vanish on the diagonal and satisfy for all $x\in\R^d$ and $t>0$
    $$
        \E_x\bigg[\sum_{s\le t}F^2(X_{s-},X_s)\bigg]
        =\E_x \left[\int_0^t\!\!\! \int_{\R^d} F^2(X_{s-},y) N(X_{s-},dy)\,  dH_s\right] <\infty.
    $$
    \end{enumerate}
\end{defn}

\begin{remark}\label{r:25}
(a) Since
$$
    \E_x \left[\int_0^t\!\!\! \int_{\R^d} F^2(X_{s-},y) N(X_{s-},dy)\, dH_s \right]
    \le \|F\|_{\infty} \E_x \left[\int_0^t\!\!\! \int_{\R^d} |F(X_{s-},y)| N(X_{s-},y)\, dH_s \right],
$$
we see, with a simple application of the Markov property, that $J(X)\subset I_2(X)$.

\medskip\noindent (b)
    Since the integrator $s\mapsto H_s$ is continuous and $s\mapsto X_s$ is c\`adl\`ag and has at most countably many discontinuities, we may replace in the integrals appearing in Definition \ref{d:i2x} $X_{s-}$ by $X_s$.

\medskip\noindent (c)
    If $\inf_{x,y}F(x,y)>-1$, then $F\in I_2(X)$ implies $\log(1+F)\in I_2(X)$.
\end{remark}

Let $F\in I_2(X)$. For $n\in \N$ define
$$
    M^{F,n}_t
    :=\sum_{s\le t}F(X_{s-},X_s)\I_{\{|X_s-X_{s-}|>\frac{1}{n}\}}
    - \int_0^t\!\!\! \int_{\R^d} F(X_{s-},y)\I_{\{|y-X_{s-}|>\frac{1}{n}\}} N(X_{s-},dy)\, dH_s.
$$
Then $M^{F,n}$ is a pure jump martingale additive functional (MAF) of $X$. Note that $\Delta M^{F,n}_s =F(X_{s-},X_s)$ and the quadratic variations are given by $[M^{F,n}]^c_t=0$ and
\begin{align*}
    [M^{F,n}]_t
    &= \sum_{s\le t}F^2(X_{s-}, X_s) \I_{\{|X_s-X_{s-}|>\frac{1}{n}\}},\\
    \langle M^{F,n}\rangle_t
    &=  \int_0^t\!\!\! \int_{\R^d} F^2(X_{s-},y) \I_{\{|y-X_{s-}|>\frac{1}{n}\}} N(X_{s-},dy)\, dH_s,
\end{align*}
According to \cite[p.~493]{S} the $L^2(\Pp_x)$-
limit $M^F_t=\lim_{n\to \infty}M^{F,n}_t$ exists and $M^F$ is a MAF of $X$. In particular, $M^F_t$ and
\begin{align}
    [M^F]_t
    &=  \sum_{s\le t}F^2(X_{s-}, X_s),\label{e:qvM}\\
    \langle M^F\rangle_t
    &=  \int_0^t\!\!\! \int_{\R^d} F^2(X_{s-},y)  N(X_{s-},dy)\, dH_s. \notag
\end{align}
are defined in $L^2$. The condition $\inf_{x,y} F(x,y)>-1$ ensures that the limit $M^F = \lim_{n\to\infty}M^{F,n}$ has again jumps bounded strictly from below by $-1$, i.e.\ $\Delta M_t^F > -1$. If $F\in J(X)$,  we could directly define $M^F$ by the a.s.\ expression $\sum_{s\le t}F(X_{s-},X_s) -\int_0^t\!\! \int_{\R^d}F(X_{s-},y)N(X_{s-},dy)\, dH_s$.

\medskip
Assume that $-c:=\inf_{x,y}F(x,y)>-1$. Then $-c\le F(x,y)\le C$ for some $C>0$. Set $L^{F,n}_t:=\EE(M^{F,n})_t$ and $L^F_t:=\EE(M^F)_t$. Clearly,
\begin{align*}
    L^{F,n}_t
    &=  \exp\left(M^{F,n}_t\right)\prod_{s\le t} (1+F(X_{s-},X_s))\I_{\{|X_s-X_{s-}|>\frac{1}{n}\}} \exp\left(-F(X_{s-},X_s)\I_{\{|X_s-X_{s-}|>\frac{1}{n}\}}\right)\\
    &= \exp\bigg(M^{F,n}_t+\sum_{s\le t} \left(\log(1+F)-F\right)(X_{s-},X_s)\I_{\{|X_s-X_{s-}|>\frac{1}{n}\}} \bigg)\\
    &= \exp\bigg( \sum_{s\le t} \log(1+F(X_{s-},X_s)\I_{\{|X_s-X_{s-}|>\frac{1}{n}\}})\\ &\qquad\qquad\mbox{}-\int_0^t\int_{\R^d}F(X_{s-},y)\I_{\{|y-X_{s-}|>\frac{1}{n}\}}\, N(X_{s-},dy)\, dH_s\bigg),
\end{align*}
and
\begin{align}
    L^F_t
    &=  \exp\left(M^F_t\right)\prod_{s\le t} (1+F(X_{s-},X_s)) \exp\left(-F(X_{s-},X_s)\right)\notag\\
    &=  \exp\Bigg(M^F_t+\sum_{s\le t} \big(\log(1+F)-F\big)(X_{s-},X_s)\Bigg).\label{e:LF}
\end{align}
Since $|\log(1+F)-F|\le c_1 F^2$ for some $c_1>0$, we see that $L^F_t\in (0,\infty)$. Because of $M_t^F \in L^2(\Pp_x)$ and $F\in I_2(X)$ we see that $\log L_t^F\in L^1(\Pp_x)$. It is proved in \cite[p.~494]{S} that $L^{F,n}_t$ converges to $L^F_t$ in probability as $n\to \infty$.

Recall that $L_F$ is under each $\Pp_x$ a non-negative local martingale, hence a supermartingale. By \cite[Section 62]{Sh} there exists a family $(\widetilde{\Pp}_x)_{x\in \R^d}$ of (sub-)probability measures on $\MM$ such that
$$
    {d\widetilde{\Pp}_x} _{|\MM_t}=L^F_t \,{d\Pp_x} _{|\MM_t}
    \quad\text{for all $t\ge 0$};
$$
under these measures $X$ is a right process which we denote by $\widetilde{X}=(\widetilde{X}_t, \MM, \MM_t, \widetilde{\Pp}_x)$. The process $\widetilde{X}$ is called a \emph{purely discontinuous Girsanov transform} of $X$. Since $L^F_t>0$ we see that ${d\widetilde{\Pp}_x} _{|\MM_t} \sim {d\Pp_x} _{|\MM_t}$ for all $t\ge 0$. We are interested when $\widetilde{\Pp}_x\sim \Pp_x$ or $\widetilde{\Pp}_x \perp \Pp_x$.

We need the following result from \cite[Proposition 2.3]{S}.
\begin{prop}\label{p:song}
    Assume that $f:\R^d\times \R^d\to [0,\infty)$ is a measurable function vanishing on the diagonal. Then we have for all $t\ge 0$ and $x\in \R^d$
    \begin{equation}\label{e:new-levy}
        \widetilde{\E}_x \bigg[\sum_{s\le t} f(X_{s-}, X_s)\bigg]
        =\widetilde{\E}_x\left[\int_0^t\!\!\! \int_{\R^d} f(X_{s-},y)(1+F(X_{s-},y))N(X_{s-},dy) \, dH_s\right].
    \end{equation}
    In particular, $((1+F(x,y))N(x,dy), H_s)$ is a L\'evy system for $\widetilde{X}$.
\end{prop}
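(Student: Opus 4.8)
The assertion is a Girsanov-type transformation rule for the L\'evy system, and the plan is to read \eqref{e:new-levy} as the statement that, under $\widetilde\Pp_x$, the predictable compensator of the jump random measure $\mu(ds,dy):=\sum_{s}\I_{\{X_s\neq X_{s-}\}}\,\delta_{(s,X_s)}(ds,dy)$ of $X$ is $\widetilde\nu(ds,dy):=(1+F(X_{s-},y))\,N(X_{s-},dy)\,dH_s$. Since $f\ge 0$, both sides of \eqref{e:new-levy} take values in $[0,\infty]$, so by monotone convergence it suffices to treat $f$ that is bounded and vanishes on $\{|x-y|\le\varepsilon\}$; for such $f$ the process $B_t:=\sum_{s\le t}f(X_{s-},X_s)$ has only finitely many non-zero increments per path, and the L\'evy system $(N,H)$ of $X$ says precisely that its $\Pp_x$-compensator is $C_t:=\int_0^t\!\int f(X_{s-},y)N(X_{s-},dy)\,dH_s$, i.e.\ $M^f:=B-C$ is a $\Pp_x$-local martingale. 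Throughout one uses the relation ${d\widetilde\Pp_x}_{|\MM_t}=L^F_t\,{d\Pp_x}_{|\MM_t}$ supplied by the construction.

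The heart of the matter is to show that $B-\widetilde C$, where $\widetilde C_t:=\int_0^t\!\int f(X_{s-},y)(1+F(X_{s-},y))N(X_{s-},dy)\,dH_s$, is a $\widetilde\Pp_x$-local martingale; by the Girsanov--Meyer characterization this reduces to checking that $L^F(B-\widetilde C)$ is a $\Pp_x$-local martingale. By construction $M^F$ is the compensated integral $F\star(\mu-\nu)$, so $\Delta M^F_s=F(X_{s-},X_s)$ and $L^F_s=L^F_{s-}(1+F(X_{s-},X_s))$ at jump times; in particular $\widetilde C$ is continuous, whence $[L^F,B-\widetilde C]=[L^F,B]$ with $\Delta[L^F,B]_s=L^F_{s-}F(X_{s-},X_s)f(X_{s-},X_s)$, whose $\Pp_x$-compensator is $\int_0^\cdot L^F_{s-}\!\int (Ff)(X_{s-},y)N(X_{s-},dy)\,dH_s$. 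Applying the product rule, writing $B=M^f+C$ and compensating $[L^F,B]$, every stochastic-integral term is a $\Pp_x$-local martingale once one localizes $L^F$ (feasible since the jumps of $L^F$ are dominated by a constant multiple of $L^F_{-}$), while the surviving finite-variation terms are
\[
    L^F_{s-}\Big(\textstyle\int fN-\int f(1+F)N+\int FfN\Big)(X_{s-},\cdot)\,dH_s = 0 ,
\]
and so cancel. Hence $B-\widetilde C$ is a $\widetilde\Pp_x$-local martingale; since $B$ and $\widetilde C$ are non-decreasing with $\widetilde C_0=0$, stopping along a localizing sequence and letting it tend to infinity gives, by monotone convergence, $\widetilde\E_x[B_t]=\widetilde\E_x[\widetilde C_t]$. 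Undoing the truncations ($\varepsilon\downarrow 0$, bound $\uparrow\infty$) by monotone convergence yields \eqref{e:new-levy} for every non-negative $f$. The ``in particular'' assertion is then immediate: $H$ is a continuous additive functional of $\widetilde X$ as well (the two measures being locally equivalent), the kernel $(1+F(x,y))N(x,dy)$ still charges no diagonal, and the identity just proved is exactly the L\'evy-system identity for $\widetilde X$.

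I expect the delicate points to be, first, the stochastic-calculus bookkeeping in the middle step---correctly identifying $[L^F,B]$ and its $\Pp_x$-compensator and verifying the drift cancellation---and, second, the care demanded by the fact that $L^F$ is a priori only a supermartingale, so that $\widetilde\Pp_x$ may be a strict sub-probability measure and $\widetilde X$ may have finite lifetime. Because of the latter, every statement must be read on finite horizons with a localization keeping $L^F_{-}$ (and $B_{-}$, $\widetilde C_{-}$) bounded, and the passage from ``local martingale'' to the expectation identity must go through monotone convergence rather than naive optional stopping. One may alternatively bypass the hands-on computation by invoking Girsanov's theorem for random measures (cf.\ \cite{Jac}), which delivers the compensator $\widetilde\nu$ directly from the shape $L^F=\EE(F\star(\mu-\nu))$.
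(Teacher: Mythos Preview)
Your proof is correct and follows the standard Girsanov route for identifying the compensator of the jump measure under an absolutely continuous change of measure. However, the paper does not prove this proposition at all: it is quoted from \cite[Proposition 2.3]{S} and introduced with the sentence ``We need the following result from \cite[Proposition 2.3]{S}.'' So there is no in-paper proof to compare against.

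Your argument is essentially the one underlying Song's result (and, more generally, the Girsanov theorem for random measures as in \cite{Jac}): reduce by monotone convergence to bounded $f$ supported away from the diagonal, verify via the product rule and the identity $\Delta L^F_s = L^F_{s-}F(X_{s-},X_s)$ that $L^F(B-\widetilde C)$ is a $\Pp_x$-local martingale by checking the drift cancellation
\[
    \textstyle\int f\,N - \int f(1+F)\,N + \int Ff\,N \;=\; 0,
\]
and conclude by localization and monotone convergence. The cautionary remarks you make---that $L^F$ is a priori only a supermartingale, that $\widetilde\Pp_x$ may be a sub-probability, and that the passage from ``local martingale'' to the expectation identity must go through a localizing sequence and monotone convergence---are exactly the technical points a complete write-up would need to address, and your handling of them is sound.
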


Set $F_1:=-\frac{F}{1+F}$. From $-1<-c\le F(x,y)\le C$ we see that $-\frac{C}{1+C}\le F_1(x,y)\le \frac{c}{1-c}$. Hence, $F_1$ is symmetric, bounded and $\inf_{x,y}F_1(x,y)>-1$. By \cite[p.~497]{S}, we have that $F_1\in I_2(\widetilde{X})$.
Define
\begin{align*}
    &\widetilde{M}^{F_1, n}_t\\
    &:= \sum_{s\le t}F_1(\widetilde{X}_{s-}, \widetilde{X}_s)\I_{\{|\widetilde{X}_s - \widetilde{X}_{s-}|>\frac{1}{n}\}} -\int_0^t\!\!\! \int_{\R^d} F_1(1+F)(\widetilde{X}_{s-},y) \I_{\{|y-\widetilde{X}_{s-}|>\frac{1}{n}\}} N(\widetilde{X}_{s-},dy)\, dH_s\\
    &=  \sum_{s\le t}F_1(\widetilde{X}_{s-}, \widetilde{X}_s)\I_{\{|\widetilde{X}_{s-}- \widetilde{X}_s|>\frac{1}{n}\}} + \int_0^t\!\!\! \int_{\R^d} F(\widetilde{X}_{s-},y)\I_{\{|y-\widetilde{X}_{s-}|>\frac{1}{n}\}} N(\widetilde{X}_{s-},dy)\, dH_s.
\end{align*}
Then $\widetilde{M}^{F_1, n}$ is a MAF of $\widetilde{X}$ and by the same argument as before it converges 
to $\widetilde{M}^{F_1}$ which is again a MAF of $\widetilde{X}$. Note that
\begin{align}
    [\widetilde{M}^{F_1}]_t
    &=  \sum_{s\le t}F_1^2(\widetilde{X}_{s-}, \widetilde{X}_s),\label{e:qv-tildeM}\\
    \langle \widetilde{M}^{F_1}\rangle_t
    &=  \int_0^t\!\!\! \int_{\R^d} F_1^2(\widetilde{X}_{s-},y)  \big(1+F(\widetilde{X}_{s-},y)\big)   N(\widetilde{X}_{s-},dy)\,  dH_s.\notag
\end{align}
Let $\widetilde{L}^{F_1,n}$ and $\widetilde{L}^{F_1}$, be the solutions to the SDEs $\widetilde{L}^{F_1,n}_t=1+\int_0^t \widetilde{L}^{F_1,n}_{s-}\, d\widetilde{M}^{F_1, n}_s$ and $\widetilde{L}^{F_1}_t=1+\int_0^t \widetilde{L}^{F_1}_{s-}\, d\widetilde{M}^{F_1}_s$, respectively. From \cite[p.~497]{S} we know that
\begin{equation}\label{e:inverse-density}
    \widetilde{L}^{F_1,n}_t=\frac{1}{L^{F,n}_t} 
\quad\text{and}\quad
    \widetilde{L}^{F_1}_t=\frac{1}{L^F_t}\, \quad \widetilde{\Pp}_x\text{~a.s.}
\end{equation}
As before we see that $\widetilde{L}^{F_1}_t\in L^1(\widetilde{\Pp}_x)$.

\begin{remark}
    Since ${\widetilde{\Pp}_x}{_{|\MM_t}} \sim {\Pp_x}{_{|\MM_t}}$, we also have $\widetilde{L}^{F_1}_t= 1/L^F_t$ $\Pp_x$ a.s.
\end{remark}

\begin{thm}\label{t:main-1}
Assume that $F\in I_2(X)$ and $\inf_{x,y}F(x,y)>-1$. Then
    \begin{align*}
        \widetilde{\Pp}_x \ll \Pp_x      &\iff \widetilde{\Pp}_x\left(\sum_{t> 0} F^2(X_{t-},X_t)<\infty\right)=1\\
        \Pp_x \ll \widetilde{\Pp}_x      &\iff \Pp_x\left(\sum_{t>0} F^2(X_{t-},X_t)<\infty\right)=1\\
        \widetilde{\Pp}_x \perp \Pp_x    &\iff \widetilde{\Pp}_x\left(\sum_{t> 0} F^2(X_{t-},X_t)=\infty\right)=1\\
        \Pp_x \perp \widetilde{\Pp}_x    &\iff \Pp_x\left(\sum_{t> 0} F^2(X_{t-},X_t)=\infty\right)=1
    \end{align*}
\end{thm}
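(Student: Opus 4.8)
\emph{Proof strategy.}
The plan is to route everything through Lemma~\ref{l:stochastic-exp}\ref{l:stochastic-exp-b} and Lemma~\ref{l:from-B-AP}, keeping careful track of the reciprocal relation between the two densities. Recall that $L^F_t=\EE(M^F)_t=d\widetilde\Pp_x|_{\MM_t}/d\Pp_x|_{\MM_t}$ and, by \eqref{e:inverse-density}, that its reciprocal $1/L^F_t=\widetilde L^{F_1}_t=\EE(\widetilde M^{F_1})_t$ is the density $d\Pp_x|_{\MM_t}/d\widetilde\Pp_x|_{\MM_t}$. The hypothesis $\inf_{x,y}F(x,y)=-c>-1$ together with the boundedness of $F$ guarantees both that $\Delta M^F_t=F(X_{t-},X_t)$ is bounded, with $\Delta M^F_t>-1$, and that $F_1=-F/(1+F)$ is bounded with $\inf F_1>-1$, so that $\Delta\widetilde M^{F_1}_t=F_1(\widetilde X_{t-},\widetilde X_t)$ is bounded above and bounded strictly below by $-1$. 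By \eqref{e:qvM} and \eqref{e:qv-tildeM} we have $[M^F]_\infty=\sum_{t>0}F^2(X_{t-},X_t)$ and $[\widetilde M^{F_1}]_\infty=\sum_{t>0}F_1^2(\widetilde X_{t-},\widetilde X_t)$; since $F_1^2=\big(F/(1+F)\big)^2$ along the (countably many) jump times, the elementary equivalence \eqref{e:simple-lemma} yields the \emph{pathwise} identity of events
\[
    \Big\{\textstyle\sum_{t>0}F^2(X_{t-},X_t)=\infty\Big\}=\Big\{\textstyle\sum_{t>0}F_1^2(X_{t-},X_t)=\infty\Big\}.
\]

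First I would record the two dichotomy identities produced by Lemma~\ref{l:stochastic-exp}\ref{l:stochastic-exp-b}: applied to $M^F$ on $(\Omega,\Pp_x)$ it gives
\[
    \big\{\textstyle\lim_{t\to\infty}L^F_t=0\big\}=\{[M^F]_\infty=\infty\}=\Big\{\textstyle\sum_{t>0}F^2(X_{t-},X_t)=\infty\Big\}\qquad\Pp_x\text{-a.s.},
\]
and applied to $\widetilde M^{F_1}$ on $(\Omega,\widetilde\Pp_x)$, after using the identity of events above, it gives
\[
    \big\{\textstyle\lim_{t\to\infty}1/L^F_t=0\big\}=\{[\widetilde M^{F_1}]_\infty=\infty\}=\Big\{\textstyle\sum_{t>0}F^2(X_{t-},X_t)=\infty\Big\}\qquad\widetilde\Pp_x\text{-a.s.}
\]
Next I would note that $L^F$ is a nonnegative supermartingale under $\Pp_x$, so $\lim_{t\to\infty}L^F_t$ exists and is finite $\Pp_x$-a.s., whence $\limsup_{t\to\infty}L^F_t=\lim_{t\to\infty}L^F_t$ $\Pp_x$-a.s.; symmetrically $\widetilde L^{F_1}=1/L^F$ is a nonnegative supermartingale under $\widetilde\Pp_x$, so $1/L^F_t$ converges $\widetilde\Pp_x$-a.s.\ and hence $L^F_t$ converges in $(0,\infty]$ $\widetilde\Pp_x$-a.s. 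Therefore every $\limsup$ occurring in Lemma~\ref{l:from-B-AP} may be replaced by a genuine limit.

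The four equivalences then follow by assembling these pieces. For $\widetilde\Pp_x\ll\Pp_x$, Lemma~\ref{l:from-B-AP}\ref{l:from-B-AP-a} with density $L^F_t$ gives $\widetilde\Pp_x\ll\Pp_x\iff\widetilde\Pp_x(\lim_t L^F_t<\infty)=1\iff\widetilde\Pp_x(\lim_t 1/L^F_t>0)=1$, which by the second dichotomy identity equals $\widetilde\Pp_x(\sum_{t>0}F^2<\infty)=1$. For $\Pp_x\ll\widetilde\Pp_x$ --- legitimate because $\Pp_x|_{\MM_t}\sim\widetilde\Pp_x|_{\MM_t}$ --- Lemma~\ref{l:from-B-AP}\ref{l:from-B-AP-a} with density $1/L^F_t$ gives $\Pp_x\ll\widetilde\Pp_x\iff\Pp_x(\lim_t 1/L^F_t<\infty)=1\iff\Pp_x(\lim_t L^F_t>0)=1$, which by the first dichotomy identity is $\Pp_x(\sum_{t>0}F^2<\infty)=1$. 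For $\widetilde\Pp_x\perp\Pp_x$, Lemma~\ref{l:from-B-AP}\ref{l:from-B-AP-b} gives $\widetilde\Pp_x\perp\Pp_x\iff\widetilde\Pp_x(\lim_t L^F_t=\infty)=1\iff\widetilde\Pp_x(\lim_t 1/L^F_t=0)=1$, i.e.\ $\widetilde\Pp_x(\sum_{t>0}F^2=\infty)=1$; using instead the third characterization in \ref{l:from-B-AP-b}, $\widetilde\Pp_x\perp\Pp_x\iff\Pp_x(\lim_t L^F_t=0)=1$, i.e.\ $\Pp_x(\sum_{t>0}F^2=\infty)=1$, which --- since $\perp$ is symmetric --- is also the criterion for $\Pp_x\perp\widetilde\Pp_x$.

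\emph{Main obstacle.}
The delicate point is bookkeeping: Lemma~\ref{l:stochastic-exp}\ref{l:stochastic-exp-b} has to be invoked twice, once for $M^F$ under $\Pp_x$ and once for $\widetilde M^{F_1}$ under $\widetilde\Pp_x$, and one must remember that $\limsup_t L^F_t$ may be replaced by $\lim_t L^F_t$ under $\widetilde\Pp_x$ only because it is $1/L^F$, and not $L^F$ itself, that is a supermartingale there. It is also worth stressing that the full strength of part~\ref{l:stochastic-exp-b} --- the \emph{equality} of events, not just the inclusion of part~\ref{l:stochastic-exp-a} --- is needed for each of the four statements, and that this is precisely where the boundedness of $F$ (hence of $F_1$) enters. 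Everything else reduces to the elementary equivalence \eqref{e:simple-lemma} applied along the countable set of jump times.
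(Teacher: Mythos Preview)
Your proposal is correct and follows essentially the same route as the paper's proof: both arguments apply Lemma~\ref{l:stochastic-exp}\ref{l:stochastic-exp-b} twice (to $M^F$ under $\Pp_x$ and to $\widetilde M^{F_1}$ under $\widetilde\Pp_x$), use the reciprocal relation \eqref{e:inverse-density} to translate between $L^F$ and $\widetilde L^{F_1}$, invoke Lemma~\ref{l:from-B-AP}, and finish with the elementary equivalence \eqref{e:simple-lemma}. Your explicit remark that the $\limsup$ in Lemma~\ref{l:from-B-AP} may be replaced by a genuine limit via supermartingale convergence is a point the paper leaves implicit, but otherwise the two arguments coincide.
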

Since $\widetilde{\Pp}_x \perp \Pp_x$ if, and only if, $\Pp_x \perp \widetilde{\Pp}_x$, Theorem \ref{t:main-1} immediately entails the following zero--two-law.
\begin{corollary}[Zero--two-law]
    Assume that $F\in I_2(X)$ and $\inf_{x,y}F(x,y)>-1$. Then
    $\widetilde{\Pp}_x\left(\sum_{t> 0} F^2(X_{t-},X_t)=\infty\right)+\Pp_x\left(\sum_{t> 0} F^2(X_{t-},X_t)=\infty\right) = 0$ or $2$ according to $\widetilde{\Pp}_x \sim \Pp_x$ or $\widetilde{\Pp}_x \bot \Pp_x$.
\end{corollary}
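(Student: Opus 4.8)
The plan is to combine the abstract dichotomy criteria of Lemma~\ref{l:from-B-AP} with the asymptotic behaviour of the density $L^F_t=d\widetilde{\Pp}_x{}_{|\MM_t}/d\Pp_x{}_{|\MM_t}$, which by \eqref{e:qvM} and Lemma~\ref{l:stochastic-exp} is governed by the perpetual sum $[M^F]_\infty=\sum_{t>0}F^2(X_{t-},X_t)$. First I would record the ``master facts'' about $L^F_\infty:=\lim_{t\to\infty}L^F_t$ under each of the two measures. Under $\Pp_x$, $L^F=\EE(M^F)$ is a nonnegative local martingale (since $\Delta M^F_t=F(X_{t-},X_t)>-1$), hence a supermartingale, so $L^F_\infty$ exists $\Pp_x$~a.s.\ in $[0,\infty)$; as $F$ is bounded, $\Delta M^F_t\le\|F\|_\infty<\infty$, so Lemma~\ref{l:stochastic-exp}\ref{l:stochastic-exp-b} together with \eqref{e:qvM} gives $\{L^F_\infty=0\}=\{[M^F]_\infty=\infty\}=\{\sum_{t>0}F^2(X_{t-},X_t)=\infty\}$ $\Pp_x$~a.s. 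Under $\widetilde{\Pp}_x$, the process $L^F$ itself need not be a supermartingale, so I would instead pass to the reciprocal density $1/L^F=\widetilde{L}^{F_1}=\EE(\widetilde{M}^{F_1})$, which by \eqref{e:inverse-density} and the results of \cite{S} is a nonnegative local martingale under $\widetilde{\Pp}_x$ with $\Delta\widetilde{M}^{F_1}_t=F_1(\widetilde{X}_{t-},\widetilde{X}_t)>-1$ and, since $F_1=-F/(1+F)$ is bounded above by $c/(1-c)$, also bounded above. Hence $\widetilde{L}^{F_1}_\infty:=\lim_{t\to\infty}\widetilde{L}^{F_1}_t$ exists $\widetilde{\Pp}_x$~a.s.\ in $[0,\infty)$, so $L^F_\infty=1/\widetilde{L}^{F_1}_\infty$ exists $\widetilde{\Pp}_x$~a.s.\ in $(0,\infty]$; and Lemma~\ref{l:stochastic-exp}\ref{l:stochastic-exp-b} with \eqref{e:qv-tildeM} gives $\{\widetilde{L}^{F_1}_\infty=0\}=\{[\widetilde{M}^{F_1}]_\infty=\infty\}=\{\sum_{t>0}F_1^2(\widetilde{X}_{t-},\widetilde{X}_t)=\infty\}$ $\widetilde{\Pp}_x$~a.s.

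Next I would translate the last event using \eqref{e:simple-lemma}: applied pathwise to the (at most countable) family of jump values, which is legitimate because $\inf_{x,y}F(x,y)>-1$ makes $\sum F^2$ and $\sum(F/(1+F))^2$ comparable term by term, it yields $\{\sum_{t>0}F_1^2(\widetilde{X}_{t-},\widetilde{X}_t)=\infty\}=\{\sum_{t>0}F^2(X_{t-},X_t)=\infty\}$. Summarising: under $\widetilde{\Pp}_x$, $L^F_\infty$ exists in $(0,\infty]$ with $\{L^F_\infty=\infty\}=\{\sum_{t>0}F^2(X_{t-},X_t)=\infty\}$ $\widetilde{\Pp}_x$~a.s., while under $\Pp_x$, $L^F_\infty$ exists in $[0,\infty)$ with $\{L^F_\infty=0\}=\{\sum_{t>0}F^2(X_{t-},X_t)=\infty\}$ $\Pp_x$~a.s. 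In particular $L^F_\infty$ is a genuine limit under \emph{both} measures, so $\limsup$ may be replaced by $\lim$ in every occurrence in Lemma~\ref{l:from-B-AP}.

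Finally I would read off the four equivalences. Recalling $d\widetilde{\Pp}_x{}_{|\MM_t}/d\Pp_x{}_{|\MM_t}=L^F_t$ and $d\Pp_x{}_{|\MM_t}/d\widetilde{\Pp}_x{}_{|\MM_t}=1/L^F_t$ (valid $\Pp_x$~a.s.\ by the Remark following \eqref{e:inverse-density}): Lemma~\ref{l:from-B-AP}\ref{l:from-B-AP-a} for the pair $(\widetilde{\Pp}_x,\Pp_x)$ gives $\widetilde{\Pp}_x\ll\Pp_x\iff\widetilde{\Pp}_x(L^F_\infty<\infty)=1\iff\widetilde{\Pp}_x(\sum_{t>0}F^2(X_{t-},X_t)<\infty)=1$; the same part for the pair $(\Pp_x,\widetilde{\Pp}_x)$ gives $\Pp_x\ll\widetilde{\Pp}_x\iff\Pp_x(1/L^F_\infty<\infty)=1\iff\Pp_x(L^F_\infty>0)=1\iff\Pp_x(\sum_{t>0}F^2(X_{t-},X_t)<\infty)=1$. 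For singularity, Lemma~\ref{l:from-B-AP}\ref{l:from-B-AP-b} (first characterization) gives $\widetilde{\Pp}_x\perp\Pp_x\iff\widetilde{\Pp}_x(L^F_\infty=\infty)=1\iff\widetilde{\Pp}_x(\sum_{t>0}F^2(X_{t-},X_t)=\infty)=1$, and its third ($\Pp$-side) characterization, together with the symmetry $\Pp_x\perp\widetilde{\Pp}_x\iff\widetilde{\Pp}_x\perp\Pp_x$, gives $\Pp_x\perp\widetilde{\Pp}_x\iff\Pp_x(L^F_\infty=0)=1\iff\Pp_x(\sum_{t>0}F^2(X_{t-},X_t)=\infty)=1$.

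The one genuinely delicate point is the behaviour of $L^F$ under $\widetilde{\Pp}_x$: since $L^F$ need not be a supermartingale there, convergence of $L^F$ must be extracted from the reciprocal density $\widetilde{L}^{F_1}$, and this is exactly where the boundedness of $F$ (hence of $F_1$) and the identities $\widetilde{L}^{F_1}=1/L^F$, $F_1\in I_2(\widetilde{X})$ from \cite{S} are indispensable, and where Lemma~\ref{l:stochastic-exp} is used in its full strength (part \ref{l:stochastic-exp-b}, ``jumps bounded above'') rather than merely part~\ref{l:stochastic-exp-a}. Everything else is bookkeeping: tracking which measure each a.s.\ statement refers to, and the pathwise passage between $\sum_{t>0}F^2$ and $\sum_{t>0}F_1^2$ via \eqref{e:simple-lemma}.
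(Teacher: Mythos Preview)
Your proposal is correct and follows the same line of argument as the paper. What you have written is essentially a complete proof of Theorem~\ref{t:main-1}: you identify $L^F_\infty$ and its reciprocal $\widetilde L^{F_1}_\infty$ as the relevant limits, apply Lemma~\ref{l:stochastic-exp}\ref{l:stochastic-exp-b} on both sides (using boundedness of $F$, hence of $F_1$), convert between $\sum F^2$ and $\sum F_1^2$ via \eqref{e:simple-lemma}, and then read off the four equivalences from Lemma~\ref{l:from-B-AP}. This is exactly how the paper proves Theorem~\ref{t:main-1}. The paper then obtains the Corollary in one line by citing Theorem~\ref{t:main-1} together with the symmetry of~$\perp$; you omit this closing step, but it is trivial: writing $p=\Pp_x(\sum F^2=\infty)$ and $\tilde p=\widetilde\Pp_x(\sum F^2=\infty)$, your four equivalences give $\tilde p=1\iff\widetilde\Pp_x\perp\Pp_x\iff\Pp_x\perp\widetilde\Pp_x\iff p=1$, hence $p+\tilde p=2\iff\widetilde\Pp_x\perp\Pp_x$, while $p+\tilde p=0\iff(\tilde p=0\text{ and }p=0)\iff\widetilde\Pp_x\sim\Pp_x$.
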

\begin{proof}[Proof of Theorem~\ref{t:main-1}]
Note that
$$
    \frac{{d\widetilde{\Pp}_x}_{|\MM_t}}{{d\Pp_x}_{|\MM_t}}
    =L^F_t=\frac{1}{\widetilde{L}^{F_1}_t}=\frac{1}{\EE(\widetilde{M}^{F_1})_t}
    \quad \Pp_{x|\MM_t}\text{~a.s.}
$$
Since $\Pp_{x|\MM_t}\sim \widetilde\Pp_{x|\MM_t}$ and since the densities are $\MM_t$ measurable, the above equality holds a.s.\ for both $\Pp_x$ and $\widetilde\Pp_x$. Hence,
$$
    \limsup_{t\to \infty}\frac{{d\widetilde{\Pp}_x}_{|\MM_t}}{{d\Pp_x}_{|\MM_t}}=\infty
    \iff
    \lim_{t\to \infty}\EE(\widetilde{M}^{F_1})_t=0
    \quad\widetilde{\Pp}_x\text{~a.s.}
$$
Since $-1<\inf_{x,y}F(x,y)\le \sup_{x,y}F(x,y)<\infty$, we
get that $-1<\inf_{x,y} F_1(x,y)\le \sup_{x,y} F_1(x,y)<\infty$. From Lemma \ref{l:stochastic-exp} we conclude that
$$
    \left\{\limsup_{t\to \infty}\frac{{d\widetilde{\Pp}_x}_{|\MM_t}}{{d\Pp_x}_{|\MM_t}}=\infty\right\}
    =
    \left\{[\widetilde{M}^{F_1}]_{\infty}=\infty\right\}
    \quad\widetilde{\Pp}_x\text{~a.s.}
$$
By Lemma \ref{l:from-B-AP}
\begin{align*}
    \widetilde{\Pp}_x \ll \Pp_x      &\iff [\widetilde{M}^{F_1}]_{\infty}<\infty \quad \widetilde{\Pp}_x\text{~a.s.}\\
    \widetilde{\Pp}_x \perp \Pp_x    &\iff [\widetilde{M}^{F_1}]_{\infty}=\infty \quad \widetilde{\Pp}_x\text{~a.s.}
\end{align*}
From \eqref{e:qv-tildeM} we conclude that
$$
    [\widetilde{M}^{F_1}]_{\infty}
    =\sum_{t > 0}F_1^2(X_{t-},X_t)
    =\sum_{t > 0} \left(\frac{F(X_{t-},X_t)}{1+F(X_{t-},X_t)}\right)^2.
$$
Now the first and the third equivalence follow from \eqref{e:simple-lemma}.

\medskip\noindent
The second and the fourth equivalence are proved analogously.  We start with the identity
$$
    \frac{{d\Pp_x}_{|\MM_t}}{{d\widetilde{\Pp}_x}_{|\MM_t}}=\frac{1}{L^F_t}=\frac{1}{\EE(M^F)_t} \quad \widetilde\Pp_{x|\MM_t}\text{~a.s.},
$$
and conclude, as before, that
\begin{align*}
    \Pp_x \ll \widetilde{\Pp}_x      &\iff [M^F]_{\infty}<\infty \quad \Pp_x\text{~a.s.}\\
    \Pp_x \perp \widetilde{\Pp}_x    &\iff [M^F]_{\infty}=\infty \quad \Pp_x\text{~a.s.}
\end{align*}
The claim now follows from \eqref{e:qvM}.
\end{proof}

In the next proposition we compute the relative entropies $\entropy(\Pp_x;\widetilde{\Pp}_x)$ and $\entropy(\widetilde{\Pp}_x; \Pp_x)$.
\begin{prop}\label{p:entropy}
Let $F\in I_2(X)$ and $\inf_{x,y}F(x,y)>-1$.
\begin{enumerate}
\item\label{p:entropy-a}
    Assume that $\widetilde{\Pp}_x \ll \Pp_x$. Then
    \begin{align}
        \entropy(\widetilde{\Pp}_x; \Pp_x)
        &=  \widetilde{\E}_x\left[\sum_{t> 0} \left(F_1-\log(1+F_1)\right)(X_{t-},X_t)\right] \label{e:entropy-tPP}\\
        &=  \widetilde{\E}_x\left[ \sum_{t> 0}\left(\log(1+F)-\frac{F}{1+F}\right)(X_{t-},X_t)\right]; \notag
    \end{align}
    in particular, $\entropy(\widetilde{\Pp}_x; \Pp_x)<\infty$ if, and only, if $\widetilde{\E}_x \left[\sum_{t> 0}F^2(X_{t-},X_t)\right]<\infty$.

\item\label{p:entropy-b}
    Assume that $\Pp_x\ll \widetilde{\Pp}_x$. Then
    \begin{equation}\label{e:entropy-PtP}
    \entropy(\Pp_x; \widetilde{\Pp}_x)=\E_x\left[\sum_{t>0} \left(F-\log(1+F)\right)(X_{t-},X_t)\right],
    \end{equation}
    and $\entropy(\Pp_x; \widetilde{\Pp}_x)<\infty$ if, and only, if $\E_x \left[\sum_{t> 0}F^2(X_{t-},X_t)\right]<\infty$.
\end{enumerate}
\end{prop}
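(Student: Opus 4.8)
The plan is to reduce both identities, via Lemma~\ref{l:from-B-AP}\ref{l:from-B-AP-c}, to a computation of finite-horizon relative entropies followed by a limit. Since $L^F_t$ is the Radon--Nikod\'ym density of $\widetilde{\Pp}_x$ with respect to $\Pp_x$ on $\MM_t$, the very definition of relative entropy gives
$$
    \entropy(\widetilde{\Pp}_x|_{\MM_t};\Pp_x|_{\MM_t})=\widetilde{\E}_x\bigl[\log L^F_t\bigr]
    \qquad\text{and}\qquad
    \entropy(\Pp_x|_{\MM_t};\widetilde{\Pp}_x|_{\MM_t})=-\E_x\bigl[\log L^F_t\bigr],
$$
so everything reduces to evaluating these two expectations. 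The guiding idea is to decompose $\log L^F_t$ into a mean-zero square-integrable martingale plus a \emph{non-decreasing} path functional, whence the martingale term disappears under the expectation and the passage $t\to\infty$ is governed by monotone convergence.

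For part~\ref{p:entropy-a} I would \emph{not} use \eqref{e:LF} directly, since there $\log L^F_t$ is expressed through $M^F$, which is a martingale under $\Pp_x$ but not under $\widetilde{\Pp}_x$. Instead I use the inverse density $L^F_t=1/\widetilde{L}^{F_1}_t=1/\EE(\widetilde{M}^{F_1})_t$. As $\widetilde{M}^{F_1}$ is a pure-jump MAF of $\widetilde{X}$ with $[\widetilde{M}^{F_1}]^c\equiv0$ and $\Delta\widetilde{M}^{F_1}_s=F_1(X_{s-},X_s)$, the Dol\'eans--Dade formula yields
$$
    \log L^F_t=-\widetilde{M}^{F_1}_t+\sum_{s\le t}\bigl(F_1-\log(1+F_1)\bigr)(X_{s-},X_s).
$$
Since $F_1\in I_2(\widetilde{X})$, the process $\widetilde{M}^{F_1}$ is a square-integrable $\widetilde{\Pp}_x$-martingale, so $\widetilde{\E}_x[\widetilde{M}^{F_1}_t]=0$; moreover $0\le F_1-\log(1+F_1)\le \mathrm{const}\cdot F_1^2\le\mathrm{const}\cdot F^2$, and the last expression is $\widetilde{\E}_x$-summable over $s\le t$ by the definition of $I_2(\widetilde{X})$, so the expectation of the sum may be split off. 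Hence $\widetilde{\E}_x[\log L^F_t]=\widetilde{\E}_x\bigl[\sum_{s\le t}(F_1-\log(1+F_1))(X_{s-},X_s)\bigr]$; as this is non-decreasing in $t$, monotone convergence together with Lemma~\ref{l:from-B-AP}\ref{l:from-B-AP-c} gives \eqref{e:entropy-tPP}. The second expression in \ref{p:entropy-a} follows upon inserting $F_1=-F/(1+F)$ and $1+F_1=(1+F)^{-1}$.

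Part~\ref{p:entropy-b} is simpler: \eqref{e:LF} gives $\log L^F_t=M^F_t+\sum_{s\le t}(\log(1+F)-F)(X_{s-},X_s)$ with $M^F$ a square-integrable $\Pp_x$-martingale (because $F\in I_2(X)$) and $F-\log(1+F)\ge0$. Taking $\E_x$, the martingale term vanishes, so $-\E_x[\log L^F_t]=\E_x\bigl[\sum_{s\le t}(F-\log(1+F))(X_{s-},X_s)\bigr]$, and monotone convergence together with Lemma~\ref{l:from-B-AP}\ref{l:from-B-AP-c}, applied with the roles of $\Pp_x$ and $\widetilde{\Pp}_x$ interchanged (legitimate since $\Pp_x\ll\widetilde{\Pp}_x$ is assumed and $\Pp_x|_{\MM_t}\sim\widetilde{\Pp}_x|_{\MM_t}$), yields \eqref{e:entropy-PtP}.

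Finally, the two ``in particular'' clauses follow from an elementary two-sided estimate: the maps $u\mapsto\bigl(\log(1+u)-u/(1+u)\bigr)/u^2$ and $u\mapsto\bigl(u-\log(1+u)\bigr)/u^2$, each assigned the value $\tfrac12$ at $u=0$, are continuous and strictly positive on the compact interval $[\inf_{x,y}F(x,y),\sup_{x,y}F(x,y)]\subset(-1,\infty)$, hence bounded there from above and below by positive constants. Consequently $\log(1+F)-F/(1+F)$ and $F-\log(1+F)$ are both comparable to $F^2$ pointwise, so the relative entropy is finite if and only if $\widetilde{\E}_x\bigl[\sum_{t>0}F^2(X_{t-},X_t)\bigr]<\infty$, respectively $\E_x\bigl[\sum_{t>0}F^2(X_{t-},X_t)\bigr]<\infty$. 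There is no serious obstacle here; the only points needing attention are the genuine (not merely local) martingale property of $M^F$ under $\Pp_x$ and of $\widetilde{M}^{F_1}$ under $\widetilde{\Pp}_x$---so that the martingale terms vanish under expectation---which the $I_2$-membership guarantees, and the admissibility of splitting the expectation of $\log L^F_t$ into its two summands, which follows from the integrability just noted.
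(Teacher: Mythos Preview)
Your argument is correct and follows essentially the same route as the paper: compute the finite-horizon entropy from the explicit form of $\log L^F_t$, kill the martingale term, and pass to the limit via Lemma~\ref{l:from-B-AP}\ref{l:from-B-AP-c}. The only difference is cosmetic: for part~\ref{p:entropy-a} the paper merely says ``using the analogue of \eqref{e:LF}'' and then compares $F_1-\log(1+F_1)$ with $F_1^2$ and $F_1^2$ with $F^2$, whereas you write out that analogue explicitly (via $L^F_t=1/\widetilde{L}^{F_1}_t$) and compare $\log(1+F)-F/(1+F)$ directly with $F^2$; since $F_1-\log(1+F_1)=\log(1+F)-F/(1+F)$, these are the same computation.
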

\begin{proof} We begin with part \ref{p:entropy-b}. By the definition of the entropy,
    \begin{gather*}
        \entropy\left({\Pp_x}_{|\MM_t}; {\widetilde{\Pp}_x}{_{|\MM_t}}\right)
        =  \E_x\left[\log\frac{{d\Pp_x}_{{|\MM_t}}}{{d\widetilde{\Pp}_x}_{|\MM_t}}\right]
        =  \E_x\left[\log\frac{1}{L^F_t}\right]
        = -\E_x\big[\log L^F_t\big].
    \end{gather*}
    Combining this with \eqref{e:LF} yields
    \begin{align*}
        \entropy\left({\Pp_x}_{|\MM_t}; {\widetilde{\Pp}_x}{_{|\MM_t}}\right)
        &=  -\E_x[M^F_t]+\E_x\left[\sum_{s\le t}\left(F-\log(1+F)\right)(X_{s-},X_s)\right]\\
        &=  \E_x\left[\sum_{s\le t}\left(F-\log(1+F)\right)(X_{s-},X_s)\right]
    \end{align*}
    since $M^F_t$ is a martingale. As $\Pp_x\ll \widetilde{\Pp}_x$ we get with Lemma \ref{l:from-B-AP} (c) that
    $$
        \entropy(\Pp_x; \widetilde{\Pp}_x)
        =\lim_{t\to \infty} \entropy\left({\Pp_x}{_{|\MM_t}}; {\widetilde{\Pp}_x}{_{|\MM_t}}\right)
        =\E_x\left[\sum_{t> 0} \left(F-\log(1+F)\right)(X_{t-},X_t)\right].
    $$
    Note that $-1<-c\le F\le C$. Hence, there are two constants $c_1,c_2>0$ such that $c_1 F^2 \le F-\log(1+F)\le c_2 F^2$. This proves the second part of the claim.

\medskip\noindent
    The identity \eqref{e:entropy-tPP} of part \ref{p:entropy-a} follows, using the analogue of \eqref{e:LF}, in the same way as \eqref{e:entropy-PtP}, while the second equality is clear. As before we can show that $\entropy(\widetilde{\Pp}_x; \Pp_x)<\infty$ if, and only, if  $\widetilde{\E}_x \left[\sum_{t\ge 0}F_1^2(X_{t-},X_t)\right]<\infty$. Since $c_3 F^2\le F^2/(1+F)^2 = F_1^2 \le c_4 F^2$ for some constants $c_3, c_4>0$, the claim follows.
\end{proof}

\begin{remark}\label{r:entropy}
   (a) Assume that $-1<\inf_{x,y}F(x,y)\le \sup_{x,y}F(x,y)<\infty$. Then the conclusion of Proposition \ref{p:entropy}\ref{p:entropy-b} holds regardless of $\Pp_x\ll \widetilde{\Pp}_x$: If $\Pp_x$ is not absolutely continuous with respect to $\widetilde{\Pp}_x$ then, by definition, $\entropy(\Pp_x; \widetilde{\Pp}_x)=\infty$. Moreover, by Theorem \ref{t:main-1}, $\Pp_x\left(\sum_{t>0}F^2(X_{t-}, X_t)=\infty\right)>0$ implying that $\E_x\left[\sum_{t>0} F^2 (X_{t-},X_t)\right]=\infty$, hence the right-hand side of \eqref{e:entropy-PtP} is infinite as well. A similar argument applies to part \ref{p:entropy-a} of the proposition.

   \medskip\noindent
   (b) Assume that the L\'evy system $(N,H)$ satisfies $H_s\equiv s$. If $\Pp_x\ll \widetilde{\Pp}_x$, we can rewrite the entropy $\entropy(\Pp_x; \widetilde{\Pp}_x)$  in the following form:
    \begin{align*}
        \entropy\left({\Pp_x}_{|\MM_t}; {\widetilde{\Pp}_x}{_{|\MM_t}}\right)
        &=  \E_x\left[\sum_{s\le t}\left(F-\log(1+F)\right)(X_{s-},X_s)\right]\\
        &=  \E_x\left[\int_0^t\!\!\! \int_{\R^d} \left(F-\log(1+F)\right)(X_{s-},y) N(X_{s-},dy)\, ds\right],
    \end{align*}
    hence, by Lemma \ref{l:from-B-AP}~\ref{l:from-B-AP-c},
    \begin{align*}
        \entropy(\Pp_x; \widetilde{\Pp}_x)
        &=  \E_x\left[\int_0^{\infty}\!\!\! \int_{\R^d} \left(F-\log(1+F)\right)(X_{s-},y) N(X_{s-},dy)\, ds\right]\\
        &=  \E_x\left[\int_0^{\infty} h(X_s)\, ds\right]\\
        &= Gh(x)
    \end{align*}
    where
    $$
        h(z):=\int_{\R^d}(F-\log(1+F))(z,y)N(z,dy),
    $$
    and $G$ denotes the potential (Green) operator of $X$. If $\Pp_x$ is not absolutely continuous with respect to $\widetilde{\Pp}_x$, then by part (a), both $\entropy(\Pp_x; \widetilde{\Pp}_x)$ and $Gh(x)$ are infinite.

    Similarly, using Proposition \ref{p:song}, we have
    \begin{align*}
        \entropy(\widetilde{\Pp}_x; \Pp_x)
        &=  \widetilde{\E}_x\left[\int_0^{\infty}\left(\log(1+F)-\frac{F}{1+F}\right)(1+F)(X_{s-},y)N(X_{s-},dy)\, ds\right]\\
        &=  \widetilde{\E}_x\left[\int_0^{\infty}\big((1+F)\log(1+F)-F\big)(X_{s-},y) N(X_{s-},dy)\, ds\right]\\
        &=  \widetilde{\E}_x \left[\int_0^{\infty}h_1(X_s)\, ds \right]\\
        &=  \widetilde{G}h_1(x),
    \end{align*}
    where
    $$
        h_1(z):=\int_{\R^d}\big( (1+F)\log(1+F)-F\big)(z,y)\,N(z,dy),
    $$
    and $\widetilde{G}$ denotes the potential (Green) operator of $\widetilde{X}$.
\end{remark}

In the next corollary we assume that for every $x\in \R^d$, there exists some $t>0$ (which may even depend on $x$) such that $X_t$ has a strictly positive density under both $\Pp_x$ and $\widetilde\Pp_x$, that is $\Pp_x(X_t\in C)=\int_C p(t,x,z)\, dz$ with $p(t,x,z)>0$ and $\widetilde{\Pp}_x(X_t\in C)=\int_C \tilde{p}(t,x,z)\, dz$ with $\tilde p(t,x,z)>0$.

\begin{corollary}\label{c:one-implies-all}
Let $F\in I_2(X)$ and $\inf_{x,y}F(x,y)>-1$.
\begin{enumerate}
\item\label{c:one-implies-all-a}
    Assume that for every $x\in \R^d$ there is some $t$ such that $X_t$ has a strictly positive transition density under $\widetilde{\Pp}_x$. If $\widetilde{\Pp}_x\ll \Pp_x$ (resp.\ $\widetilde{\Pp}_x\perp \Pp_x$) for some $x\in \R^d$, then this is true for all $x\in \R^d$.

\item\label{c:one-implies-all-b}
    Assume that
    for every $x\in\R^d$ there is some $t$ such that $X_t$ has a strictly positive transition density under $\Pp_x$. If $\Pp_x\ll \widetilde{\Pp}_x$ (resp.\ $\Pp_x\perp \widetilde{\Pp}_x$) for some $x\in \R^d$, then this is true for all $x\in \R^d$.
\end{enumerate}
\end{corollary}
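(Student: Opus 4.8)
The plan is to encode both alternatives in a single function and then show that this function cannot attain the value $1$ (respectively $0$) at one point without doing so everywhere. Set $A_\infty:=\sum_{t>0}F^2(X_{t-},X_t)$ and define $\varphi(x):=\widetilde{\Pp}_x(A_\infty<\infty)$; this is a $[0,1]$-valued measurable function of $x$, since $x\mapsto\widetilde{\Pp}_x$ is a kernel and $A_\infty$ is $\MM$-measurable. By Theorem~\ref{t:main-1}, $\widetilde{\Pp}_x\ll\Pp_x$ is equivalent to $\varphi(x)=1$, while $\widetilde{\Pp}_x\perp\Pp_x$ is equivalent to $\varphi(x)=0$ (here one uses that $\widetilde{\Pp}_x$ is a probability measure and that $A_t<\infty$ a.s.\ for every finite $t$, so that $\{A_\infty<\infty\}$ and $\{A_\infty=\infty\}$ partition $\Omega$ up to a null set). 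Thus part~\ref{c:one-implies-all-a} reduces to showing that if $\varphi$ takes the value $1$ (resp.\ $0$) at one point of $\R^d$, then it does so at every point; part~\ref{c:one-implies-all-b} is proved in exactly the same way, working under $\Pp_x$ with its transition density $p(t,x,z)$ in place of $\widetilde{\Pp}_x$ and $\tilde p(t,x,z)$.

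The first step is to observe that $\{A_\infty<\infty\}$ is shift invariant up to null sets. Since $F_1\in I_2(\widetilde X)$ and $F_1^2\asymp F^2$ (see the discussion preceding Proposition~\ref{p:song}), we have $\widetilde{\E}_x[A_t]<\infty$, hence $A_t<\infty$ $\widetilde{\Pp}_x$-a.s.\ for every finite $t$; combining this with the identity $A_t\circ\theta_u=A_{t+u}-A_u$, which holds on the canonical Skorokhod space because $X_{s-}\circ\theta_u=X_{(s+u)-}$, gives $A_\infty\circ\theta_u=A_\infty-A_u$ on the a.s.\ set $\{A_u<\infty\}$, and therefore $\theta_u^{-1}\{A_\infty<\infty\}=\{A_\infty<\infty\}$ $\widetilde{\Pp}_x$-a.s.\ for each $u\ge 0$. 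The second step turns this into an averaging identity for $\varphi$: applying the Markov property of $\widetilde X$ under $\widetilde{\Pp}_x$ to this shift-invariant event and taking expectations yields $\varphi(x)=\widetilde{\E}_x[\varphi(\widetilde X_u)]$, and if $u$ is a time at which $\widetilde X_u$ has a strictly positive transition density under $\widetilde{\Pp}_x$ this becomes
\begin{equation}\label{e:cor-averaging}
    \varphi(x)=\int_{\R^d}\tilde p(u,x,z)\,\varphi(z)\,dz .
\end{equation}

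From here the conclusion is short. Suppose first that $\widetilde{\Pp}_{x_0}\ll\Pp_{x_0}$ for some $x_0$, so $\varphi(x_0)=1$; choosing $t_0$ with $\tilde p(t_0,x_0,\cdot)>0$ a.e.\ and using $\varphi\le 1$ in \eqref{e:cor-averaging}, the nonnegative function $z\mapsto\tilde p(t_0,x_0,z)(1-\varphi(z))$ must vanish Lebesgue-a.e., i.e.\ $\varphi=1$ a.e. For an arbitrary $x$, pick $t_x$ with $\tilde p(t_x,x,\cdot)>0$ a.e.; then \eqref{e:cor-averaging} gives $\varphi(x)=\int_{\R^d}\tilde p(t_x,x,z)\,dz=1$, so $\widetilde{\Pp}_x\ll\Pp_x$ for all $x$ by Theorem~\ref{t:main-1}. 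If instead $\widetilde{\Pp}_{x_0}\perp\Pp_{x_0}$ for some $x_0$, then $\varphi(x_0)=0$, and the same argument applied to $1-\varphi$ — which also satisfies \eqref{e:cor-averaging}, because $\tilde p(u,x,\cdot)$ integrates to $1$ — gives $\varphi\equiv 0$, i.e.\ $\widetilde{\Pp}_x\perp\Pp_x$ for all $x$.

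The one genuinely delicate point I anticipate is the shift invariance in the first step: one must secure the finiteness of the additive functional over finite time horizons — which is exactly where the integrability built into $I_2$ enters ($F_1\in I_2(\widetilde X)$ with $F_1^2\asymp F^2$ for part~\ref{c:one-implies-all-a}, and simply $F\in I_2(X)$ for part~\ref{c:one-implies-all-b}) — before one may write $A_\infty\circ\theta_u=A_\infty-A_u$ and invoke the Markov property on $\{A_\infty<\infty\}$. Everything afterwards is routine; the only mild bookkeeping is that the time in the hypothesis may depend on $x$, which is harmless since \eqref{e:cor-averaging} is applied separately to $(x_0,t_0)$ and to each $(x,t_x)$.
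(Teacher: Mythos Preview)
Your proof is correct and follows essentially the same route as the paper: you encode the alternatives via the function $\varphi(x)=\widetilde{\Pp}_x(A_\infty<\infty)$ (the paper uses the complementary $f(y)=\Pp_y(A_\infty=\infty)$ for part~\ref{c:one-implies-all-b}), establish the averaging identity $\varphi(x)=\int \tilde p(t,x,z)\varphi(z)\,dz$ from the Markov property together with the a.s.\ finiteness of $A_t$ for finite $t$, and then use positivity of the transition density to propagate the value $1$ (or $0$) from one point to all points. Your explicit justification of $A_t<\infty$ $\widetilde{\Pp}_x$-a.s.\ via $F_1\in I_2(\widetilde X)$ and $F_1^2\asymp F^2$ is the correct detail for part~\ref{c:one-implies-all-a}; the paper simply remarks that \ref{c:one-implies-all-a} is proved in the same way as \ref{c:one-implies-all-b}.
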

\begin{proof}
\ref{c:one-implies-all-b}
Assume that $\Pp_x\perp \widetilde{\Pp}_x$, let $f(y)=\Pp_y \big(\sum_{s> 0}F^2(X_{s-},X_s)=\infty\big)$ and pick $t$ as in the statement of the corollary. Since
$$
    \sum_{s> 0}F^2(X_{s-},X_s) = \sum_{s\le t}F^2(X_{s-},X_s) +\sum_{s>t}F^2(X_{s-},X_s)
$$
and the first sum is always finite (as $F\in I_2(X)$), we see that
\begin{align*}
    f(x)
    =  \Pp_x\left(\sum_{s> 0}F^2(X_{s-},X_s)=\infty\right)
    &=\Pp_x\left(\sum_{s>t}F^2(X_{s-},X_s)=\infty\right)\\
    &=  \Pp_x\left(\sum_{s> 0}F^2(X_{s-}\circ\theta_t, X_s\circ\theta_t)=\infty\right)\\
    &=  \E_x\left[\Pp_{X_t}\left(\sum_{s> 0}F^2(X_{s-},X_s)=\infty\right)\right]\\
    &= \E_x f(X_t)\\
    &=  \int_{\R^d}f(z)p(t,x,z) \, dz.
\end{align*}
By Theorem \ref{t:main-1} we have $f(x)=1$, and this implies that $f(z)=1$ for Lebesgue a.e.~$z\in \R^d$. Then, however, $f(y)=\int_{\R^d}f(z)p(t,y,z)\, dz=1$ for every $y\in \R^d$.

Assume now that $\Pp_x\ll \widetilde{\Pp}_x$, and let  $g(y)=\Pp_{y}\big(\sum_{s> 0}F^2(X_{s-},X_s)<\infty\big)$, so that $g(x)=1$. The same argument as above shows that $g(y)=1$ for all $y\in \R^d$.

\medskip\noindent
Part \ref{c:one-implies-all-a} is proved in the same way as \ref{c:one-implies-all-b}.
\end{proof}

Recall that the invariant $\sigma$-field $\II$ is defined as
$$
    \II=\big\{\Lambda\in \MM \,:\, \theta_t^{-1} \Lambda =\Lambda \ \text{for all } t\ge 0\big\}.
$$
\begin{corollary}\label{c:dichotomy}
    Assume that $F\in I_2(X)$ and $\inf_{x,y}F(x,y)>-1$. Fix $x\in \R^d$. If the invariant $\sigma$-field $\II$ is trivial under both $\Pp_x$ and $\widetilde{\Pp}_x$, then either $\widetilde{\Pp}_x\sim \Pp_x$ or $\widetilde{\Pp}_x\perp  \Pp_x$.
\end{corollary}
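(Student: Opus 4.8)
The plan is to reduce the dichotomy to the all-or-nothing behaviour of the positive additive functional
\[
    A_t:=\sum_{0<s\le t}F^2(X_{s-},X_s),
    \qquad
    A_\infty=\lim_{t\to\infty}A_t=\sum_{t>0}F^2(X_{t-},X_t),
\]
and then to invoke Theorem~\ref{t:main-1}, which tells us that all four relations $\widetilde{\Pp}_x\ll\Pp_x$, $\Pp_x\ll\widetilde{\Pp}_x$, $\widetilde{\Pp}_x\perp\Pp_x$, $\Pp_x\perp\widetilde{\Pp}_x$ are governed solely by whether $A_\infty<\infty$ a.s.\ or $A_\infty=\infty$ a.s., separately under $\Pp_x$ and under $\widetilde{\Pp}_x$. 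So the statement comes down to showing that, once $\II$ is trivial under one of the two measures, $A_\infty$ is a.s.\ finite or a.s.\ infinite under that measure.

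First I would check that $A=(A_t)_{t\ge0}$ is genuinely additive. Since the continuous part of $[M^F]$ vanishes, \eqref{e:qvM} gives $A_t=[M^F]_t$; using $X_u\circ\theta_s=X_{u+s}$ and the analogous identity for left limits, one reads off $A_{t+s}=A_s+A_t\circ\theta_s$ for all $s,t\ge0$, and letting $t\to\infty$ (the functional is increasing) this becomes $A_\infty=A_s+A_\infty\circ\theta_s$. Because $F\in I_2(X)$, Definition~\ref{d:i2x} gives $\E_x[A_s]<\infty$, hence $A_s<\infty$ $\Pp_x$-a.s.\ for every $x$ and every $s\ge0$; since $\{A_s<\infty\}\in\MM_s$ and $\widetilde{\Pp}_x|_{\MM_s}\sim\Pp_x|_{\MM_s}$, also $A_s<\infty$ $\widetilde{\Pp}_x$-a.s. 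Consequently, on the full-measure set $\{A_s<\infty\}$ the event $\Lambda:=\{A_\infty=\infty\}$ coincides with $\theta_s^{-1}\Lambda=\{A_\infty\circ\theta_s=\infty\}$, so that $\Pp_x(\Lambda\triangle\theta_s^{-1}\Lambda)=\widetilde{\Pp}_x(\Lambda\triangle\theta_s^{-1}\Lambda)=0$ for every $s\ge0$; in other words, $\Lambda$ is almost invariant under both $\Pp_x$ and $\widetilde{\Pp}_x$.

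Finally I would invoke the standard fact that, when the invariant $\sigma$-field $\II$ is trivial under a measure, every event that is almost invariant with respect to that measure has probability $0$ or $1$ under it (an almost invariant set agrees, up to a null set, with a genuinely invariant one). Applying this to $\Lambda$ leaves exactly two cases. If $\Pp_x(\Lambda)=1$ or $\widetilde{\Pp}_x(\Lambda)=1$, then $\sum_{t>0}F^2(X_{t-},X_t)=\infty$ a.s.\ under $\Pp_x$ or under $\widetilde{\Pp}_x$, and Theorem~\ref{t:main-1} gives $\widetilde{\Pp}_x\perp\Pp_x$. Otherwise $\Pp_x(\Lambda)=\widetilde{\Pp}_x(\Lambda)=0$, so $\sum_{t>0}F^2(X_{t-},X_t)<\infty$ a.s.\ under both measures, and Theorem~\ref{t:main-1} yields simultaneously $\widetilde{\Pp}_x\ll\Pp_x$ and $\Pp_x\ll\widetilde{\Pp}_x$, i.e.\ $\widetilde{\Pp}_x\sim\Pp_x$. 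The one genuinely delicate point is the passage from ``almost invariant'' to ``invariant''; the rest is routine bookkeeping with the additive functional $A$ together with an appeal to Theorem~\ref{t:main-1}.
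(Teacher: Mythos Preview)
Your proof is correct and follows essentially the same route as the paper: identify $\Lambda=\{A_\infty=\infty\}$ as (almost) invariant, use triviality of $\II$ to force $\Pp_x(\Lambda),\widetilde{\Pp}_x(\Lambda)\in\{0,1\}$, and read off the dichotomy from Theorem~\ref{t:main-1}. The paper simply asserts $\Lambda\in\II$ outright and proceeds; your extra care with the almost-invariance (using $A_s<\infty$ a.s.\ from $F\in I_2(X)$) is a legitimate refinement, since on the full Skorokhod space $A_s$ need not be finite pathwise, but it does not change the argument's substance.
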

\begin{proof}
    Pick $\Lambda=\left\{\sum_{s> 0} F^2(X_{s-}, X_s)=\infty\right\}$. Then $\Lambda \in \II$, hence by the assumption $\Pp_x(\Lambda)=0$ or $1$, and $\widetilde{\Pp}_x(\Lambda)=0$ or $1$.

    If $\Pp_x(\Lambda)=1$, then by Theorem \ref{t:main-1} we first have $\Pp_x \perp \widetilde{\Pp}_x$, and then it follows that $\widetilde{\Pp}_x(\Lambda)=1$. The rest of the proof follows by exchanging $\Pp_x$ and $\widetilde{\Pp}_x$.
\end{proof}

Assume that for all $x\in \R^d$, $X_t$ admits a positive transition density $p(t,x,z)$ under $\Pp_x$. Let $\Lambda \in \II$ and define $\phi(y):=\Pp_y(\Lambda)$. Then,  by the Markov property,
$$
    \phi(x)
    =\E_x [\I_{\Lambda}]
    =\E_x [\I_{\Lambda}\circ \theta_t]
    =\E_x\left[\Pp_{X_t} \Lambda\right]
    =\E_x[\phi(X_t)]
    =\int_{\R^d} \phi(z)p(t,x,z)\, dz.
$$
If $\phi(x)=1$, we get from $0\le \phi\le 1$ that $\phi(z)=1$ Lebesgue a.e.; in the same way as in the proof of Corollary \ref{c:one-implies-all} it follows that $\phi\equiv 1$. Similarly, if $\phi(x)=0$, then $\phi\equiv 0$. In particular, this shows that \emph{if $\II$ is trivial under $\Pp_x$ for some $x\in \R^d$, then it is trivial under $\Pp_x$ for all $x\in \R^d$}. Under appropriate conditions, the same conclusion holds for $\widetilde{\Pp}_x$.

\section{Finiteness of the expectation of additive functionals}\label{sec-3}
Let $X=(\Omega, \MM, \MM_t, \theta_t, X_t, \Pp_x)$ be a strong Markov process with state space $\R^d$, assume that $\MM = \sigma\big(\bigcup_{t\geq 0}\MM_t\big)$, and let $A=(A_t)_{t\ge 0}$ be a \emph{perfect} additive functional of $X$, cf.~\cite{BG}. This means that $A$ is non-negative, adapted and there exists a set $\Lambda\in \MM$ such that $\Pp_x(\Lambda)=1$ for all $x\in \R^d$, such that on $\Lambda$
\begin{itemize}
\item
    $t\mapsto A_t$ is non-decreasing, right-continuous and $A_0=0$,
\item
    $A_{t+s}=A_s+A_t\circ \theta_s$ for all $s,t\ge 0$.
\end{itemize}
We will be mainly interested in the following two types of perfect additive functionals:
\begin{enumerate}
\item[(i)]
    $A_t:=\int_0^t f( X_s )\, ds$ where $f:\R^d\to [0,\infty)$ is a measurable function;
\item[(ii)]
    $A_t:=\sum_{s\le t} F(X_{s-}, X_s)$ where $F:\R^d\times \R^d \to [0,\infty)$ is a measurable function vanishing on the diagonal.
\end{enumerate}
In this section we discuss the problem when $\Pp_x(A_{\infty}<\infty)=1$ implies $\E_x [A_{\infty}]<\infty$.

Set $M_t:=e^{-A_t}$ for $t\ge 0$. Then $M=(M_t)_{t\ge 0}$ is a perfect multiplicative functional of $X$ taking values in $[0,1]$, and $M_{\infty}=\lim_{t\to \infty}M_t=e^{-A_{\infty}}$ is also well defined. Clearly, $M_{\infty}>0$ if, and only, if $A_{\infty}<\infty$. We also note that $M_0=1$. From the defining property of a multiplicative functional, $M_{t+s}=M_s\cdot (M_t\circ \theta_s)$ we get, letting $t\to \infty$, that $M_{\infty}=M_s \cdot (M_{\infty} \circ \theta_s)$, $\Pp_x$ a.s. for all $x\in \R^d$; the exceptional set does not depend on $x$.

Define $u:\R^d \to [0,1]$ by $u(x):=\E_x[M_{\infty}]$. Then $u$ is measurable, and by \cite[Theorem 2.5]{CS03}  $(u(X_t))_{t\ge 0}$ is a c\`adl\`ag process. We will prove several lemmas involving this function $u$.

\begin{lemma}\label{l:u-1}
    The process $\big(u(X_t)M_t\big)_{t\ge 0}$ is a bounded $\Pp_x$-martingale for all $x\in \R^d$. In particular
    \begin{equation}\label{e:u-1}
        u(x)=\E_x[u(X_t)M_t].
    \end{equation}
\end{lemma}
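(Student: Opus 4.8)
The plan is to verify the martingale property directly using the Markov property together with the multiplicative identity $M_\infty = M_s\cdot(M_\infty\circ\theta_s)$ established just above the statement. First I would note that $u(X_t)M_t$ is bounded, since $0\le u\le 1$ and $0\le M_t\le 1$; in particular the process is integrable under every $\Pp_x$, and there is no integrability obstruction. For adaptedness, $M_t=e^{-A_t}$ is $\MM_t$-measurable because $A$ is adapted, and $u(X_t)$ is $\MM_t$-measurable because $u$ is a Borel function and $X_t$ is $\MM_t$-measurable; the càdlàg regularity of $t\mapsto u(X_t)$ quoted from \cite[Theorem 2.5]{CS03} is only needed to have a genuine (not merely a version of a) martingale, so I would record it but not dwell on it.

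The core computation is the following. Fix $x\in\R^d$ and $0\le s\le t$. By the tower property and the Markov property at time $t$,
\begin{equation*}
    \E_x\big[u(X_t)M_t \,\big|\, \MM_s\big]
    = \E_x\big[\E_{X_t}[M_\infty]\, M_t \,\big|\, \MM_s\big].
\end{equation*}
Here I use that $u(X_t) = \E_{X_t}[M_\infty] = \E_x[M_\infty\circ\theta_t \mid \MM_t]$ by the Markov property. Hence $u(X_t)M_t = \E_x[M_t\,(M_\infty\circ\theta_t)\mid\MM_t] = \E_x[M_\infty\mid\MM_t]$, where the last step is exactly the multiplicative-functional identity $M_\infty = M_t\cdot(M_\infty\circ\theta_t)$ (valid $\Pp_x$-a.s., with an exceptional set not depending on $x$). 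Therefore $u(X_t)M_t$ is a version of the càdlàg martingale $\E_x[M_\infty\mid\MM_t]$, which is manifestly a $\Pp_x$-martingale; projecting onto $\MM_s$ gives $\E_x[u(X_t)M_t\mid\MM_s] = \E_x[M_\infty\mid\MM_s] = u(X_s)M_s$. Boundedness has already been observed. Taking $s=0$ and using $M_0=1$, $X_0=x$ gives $u(x) = \E_x[u(X_0)M_0] = \E_x[u(X_t)M_t]$, which is \eqref{e:u-1}.

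I do not anticipate a serious obstacle here; the only mild subtlety is bookkeeping about null sets. The identity $M_\infty = M_t\cdot(M_\infty\circ\theta_t)$ holds off a single $\MM$-set that is $\Pp_x$-null for every $x$, so the a.s.\ equality $u(X_t)M_t = \E_x[M_\infty\mid\MM_t]$ is genuinely $\Pp_x$-a.s.\ for each $x$; combined with the known càdlàg modification of $t\mapsto u(X_t)$ this upgrades the equality to an indistinguishability statement and makes $(u(X_t)M_t)_{t\ge0}$ a bona fide martingale rather than merely a modification of one. One should also be careful that the conditional expectation $\E_x[M_\infty\mid\MM_t]$ makes sense, which is immediate since $0\le M_\infty\le 1$ is bounded. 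With these remarks in place the proof is complete.
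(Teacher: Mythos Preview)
Your proof is correct and is essentially the paper's own argument: both establish the identity $u(X_t)M_t=\E_x[M_\infty\mid\MM_t]$ via the multiplicative relation $M_\infty=M_t\cdot(M_\infty\circ\theta_t)$ and the Markov property, and then read off the martingale property and \eqref{e:u-1}. Your additional remarks on adaptedness, null sets, and c\`adl\`ag modifications are sound but not needed beyond what the paper records.
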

\begin{proof}
    By the Markov property we have
    \begin{equation}\label{e:1a}\begin{aligned}
        \E_x[M_{\infty}\, |\, \MM_s]
        &= \E_x[M_s \cdot (M_{\infty} \circ \theta_s)\, |\, \MM_s] \\
        &= M_s\E_x[M_{\infty} \circ \theta_s\, |\, \MM_s] \\
        &= M_s \E_{X_s}[M_{\infty}]\\
        &=M_s u(X_s).
    \end{aligned}\end{equation}
    The expression on the right-hand side is obviously bounded. Since the left-hand side is a martingale, the claim follows.
\end{proof}

From now on we will assume that $\Pp_x(A_{\infty}<\infty)=1$ for every $x\in \R^d$. This implies that $\Pp_x(M_{\infty}>0)=1$ for every $x\in \R^d$, hence $u>0$.

\begin{lemma}\label{l:u-2}
    It holds that $\Pp_x\left(\lim_{t\to \infty} u(X_t) = 1\right)=1$  for every $x\in \R^d$.
\end{lemma}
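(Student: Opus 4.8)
The plan is to combine the martingale convergence theorem for $\big(u(X_t)M_t\big)_{t\ge 0}$ from Lemma \ref{l:u-1} with the fact that $M_t\to M_\infty>0$ $\Pp_x$-a.s. (which holds because we now assume $\Pp_x(A_\infty<\infty)=1$). First I would note that, by Lemma \ref{l:u-1}, the bounded martingale $N_t:=u(X_t)M_t$ converges $\Pp_x$-a.s.\ and in $L^1$ to some limit $N_\infty$; moreover, from the identity \eqref{e:1a}, $N_t=\E_x[M_\infty\mid\MM_t]$, so by Lévy's upward theorem $N_\infty=\E_x[M_\infty\mid\MM_\infty]=M_\infty$ $\Pp_x$-a.s.\ (using $\MM=\sigma(\bigcup_{t\ge 0}\MM_t)=\MM_\infty$). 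Hence $u(X_t)M_t\to M_\infty$ $\Pp_x$-a.s.

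Next, since $\Pp_x(A_\infty<\infty)=1$ we have $M_t=e^{-A_t}\to e^{-A_\infty}=M_\infty$ $\Pp_x$-a.s., and $0<M_\infty\le 1$ $\Pp_x$-a.s. Dividing the two a.s.\ limits on the event $\{0<M_\infty\le 1\}$ (a $\Pp_x$-full event) gives
\begin{equation*}
    u(X_t)=\frac{u(X_t)M_t}{M_t}\xrightarrow[t\to\infty]{}\frac{M_\infty}{M_\infty}=1
    \qquad\Pp_x\text{-a.s.},
\end{equation*}
which is exactly the assertion. I would also recall that $(u(X_t))_{t\ge 0}$ is càdlàg by \cite[Theorem 2.5]{CS03}, so the limit along real $t$ is unambiguous.

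The only point that needs a little care—and thus the main obstacle—is the justification that the martingale limit $N_\infty$ really equals $M_\infty$ rather than merely $\E_x[M_\infty\mid\MM_\infty]$; this is where the standing assumption $\MM=\sigma(\bigcup_{t\ge 0}\MM_t)$ is essential, since it forces $\MM_\infty$-measurability of $M_\infty$ and hence $\E_x[M_\infty\mid\MM_\infty]=M_\infty$. One should check that $M_\infty=e^{-A_\infty}$ is indeed $\MM_\infty$-measurable, which follows since $A_\infty=\lim_{t\to\infty}A_t=\sup_t A_t$ is a countable supremum of $\MM_t$-measurable (hence $\MM_\infty$-measurable) random variables. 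Everything else is routine: positivity of $u$ was already observed right before the statement, and boundedness of $N_t$ by $1$ was noted in Lemma \ref{l:u-1}.
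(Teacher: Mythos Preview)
Your proof is correct and follows essentially the same approach as the paper: both use that $u(X_t)M_t$ is a bounded martingale together with $M_t\to M_\infty>0$ to deduce $u(X_t)\to 1$. The only minor difference is in the identification step---you invoke L\'evy's upward theorem (using $\MM=\MM_\infty$) to get $N_\infty=M_\infty$ directly and then divide, whereas the paper instead passes to the limit in \eqref{e:u-1} to obtain $\E_x[(\lim_{t\to\infty}u(X_t))\,M_\infty]=\E_x[M_\infty]$ and concludes from $0\le u\le 1$ and $M_\infty>0$; both arguments are equally short and valid.
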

\begin{proof}
    Since $\big(u(X_t)M_t\big)_{t\ge 0}$ is a bounded $\Pp_x$-martingale, the martingale convergence theorem shows that the limit $\lim_{t\to \infty} u(X_t)M_t$ exists $\Pp_x$ a.s.

    On the other hand, $\lim_{t\to \infty}M_t=M_{\infty}>0$ $\Pp_x$ a.s. Hence, $\lim_{t\to \infty} u(X_t)$ exists $\Pp_x$ a.s.  Letting $t\to \infty$ in \eqref{e:u-1} we get from the bounded convergence theorem that
    $$
        \E_x\left[\lim_{t\to \infty}u(X_t) M_{\infty}\right]=u(x)=\E_x[M_{\infty}].
    $$
    Since $0\le u\le 1$ and $\Pp_x(M_{\infty}>0)=1$, the claim follows.
\end{proof}

For a measurable function $F:\R^d\times \R^d \to [0,\infty)$ vanishing on the diagonal, let $\widetilde{F}(x,y):=1-e^{-F(x,y)}$ and $\newAF_t:=\sum_{s\le t}\widetilde{F}(X_{s-},X_s)$.
\begin{lemma}\label{l:u-3}
\begin{enumerate}
\item\label{l:u-3-a}
    Let $A_t=\int_0^{\infty}f(X_s)\, dA_s$. For all $t>0$ it holds that
    \begin{align}
        u(x)
        &=\E_x\left[u(X_t)-\int_0^t u(X_s)\, dA_s\right]\label{e:u-3a-c} \\
        &= 1-\E_x\left[\int_0^{\infty} u(X_s)\, dA_s\right]\label{e:u-3b-c}.
    \end{align}
\item\label{l:u-3-b}
    Let $A_t=\sum_{s\le t}F(X_{s-},X_s)$  and $\newAF_t:=\sum_{s\le t} \big(1-e^{-F(X_{s-},X_s)}\big)$.  For all $t>0$ it holds that
    \begin{align}
        u(x)
        &=\E_x\left[u(X_t)-\int_0^t u(X_s)\, d\newAF_s\right]\label{e:u-3a-d} \\
        &= 1-\E_x\left[\int_0^{\infty} u(X_s)\, d\newAF_s\right]\label{e:u-3b-d}.
    \end{align}
\end{enumerate}
\end{lemma}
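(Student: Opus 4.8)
\emph{Plan.} The plan is to derive a semimartingale identity for the c\`adl\`ag process $(u(X_t))_{t\ge 0}$ and then pass to the limit twice. Recall from Lemma~\ref{l:u-1} that $N_t:=u(X_t)M_t=\E_x[M_\infty\,|\,\MM_t]$ is a bounded $\Pp_x$-martingale, and put $V_t:=1/M_t=e^{A_t}$. Under the standing assumption $\Pp_x(A_\infty<\infty)=1$ we have $A_t<\infty$ $\Pp_x$-a.s., so $V$ is a positive, increasing, c\`adl\`ag (hence locally bounded) process of finite variation with $V_0=1$; consequently $u(X_t)=N_tV_t$ is a semimartingale. In the situation of part~\ref{l:u-3-a} the functional $A$, and therefore $V$, is continuous with $dV_s=V_s\,dA_s$; in the situation of part~\ref{l:u-3-b} both $A$ and $V$ are pure-jump processes, and since $\Delta A_s=F(X_{s-},X_s)$ one computes $\Delta V_s=V_s-V_{s-}=e^{A_s}\bigl(1-e^{-F(X_{s-},X_s)}\bigr)=V_s\,\Delta\newAF_s$, i.e.\ $dV_s=V_s\,d\newAF_s$; note also $\newAF_t\le A_t<\infty$ in that case.

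First I would apply the integration-by-parts formula to $u(X_t)=N_tV_t$,
$$
    u(X_t)=u(x)+\int_0^t N_{s-}\,dV_s+\int_0^t V_{s-}\,dN_s+[N,V]_t ,
$$
and simplify the first and last terms. As $V$ has finite variation, $[N,V]_t=\sum_{s\le t}\Delta N_s\,\Delta V_s$. In part~\ref{l:u-3-a} this vanishes ($V$ is continuous) and, since $\{s:N_{s-}\ne N_s\}$ is countable and therefore $dA_s$-null, $\int_0^t N_{s-}\,dV_s=\int_0^t N_s\,dV_s=\int_0^t N_sV_s\,dA_s=\int_0^t u(X_s)\,dA_s$, where I used $N_sV_s=N_s/M_s=u(X_s)$. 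In part~\ref{l:u-3-b}, $\int_0^t N_{s-}\,dV_s+[N,V]_t=\sum_{s\le t}(N_{s-}+\Delta N_s)V_s\,\Delta\newAF_s=\sum_{s\le t}N_sV_s\,\Delta\newAF_s=\int_0^t u(X_s)\,d\newAF_s$. Writing $B$ for $A$ (part~\ref{l:u-3-a}) resp.\ $\newAF$ (part~\ref{l:u-3-b}), in both cases we obtain
$$
    u(X_t)=u(x)+\int_0^t u(X_s)\,dB_s+\int_0^t V_{s-}\,dN_s .
$$

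It then remains to take expectations, which I would do after a localization. Set $T_n:=\inf\{s>0:V_s\ge n\}$; since $V$ is increasing with finite values, $\sup_{s\le t}V_s=V_t<\infty$ $\Pp_x$-a.s., hence $T_n\uparrow\infty$ and, for every fixed $t$, $t\wedge T_n=t$ for all $n$ large enough (depending on the path). On $[0,T_n]$ the predictable integrand $V_{s-}$ is bounded by $n$, so $\int_0^{\cdot\wedge T_n}V_{s-}\,dN_s$ is a true martingale (the bounded martingale $N$ lies in $L^2$), null at $0$. Evaluating the last display at $t\wedge T_n$ and taking $\E_x$ gives $u(x)=\E_x\bigl[u(X_{t\wedge T_n})\bigr]-\E_x\bigl[\int_0^{t\wedge T_n}u(X_s)\,dB_s\bigr]$. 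Letting $n\to\infty$, the first expectation converges to $\E_x[u(X_t)]$ by dominated convergence ($0\le u\le1$, and $u(X_{t\wedge T_n})=u(X_t)$ for large $n$), while the second increases to $\E_x\bigl[\int_0^t u(X_s)\,dB_s\bigr]$ by monotone convergence (non-negative integrand, increasing domains of integration). This yields \eqref{e:u-3a-c} resp.\ \eqref{e:u-3a-d}, and in particular $\E_x\bigl[\int_0^t u(X_s)\,dB_s\bigr]=\E_x[u(X_t)]-u(x)<\infty$. Finally, letting $t\to\infty$ in that identity, $\E_x[u(X_t)]\to1$ by Lemma~\ref{l:u-2} and bounded convergence, while $\E_x\bigl[\int_0^t u(X_s)\,dB_s\bigr]\uparrow\E_x\bigl[\int_0^\infty u(X_s)\,dB_s\bigr]$ by monotone convergence, which gives \eqref{e:u-3b-c} resp.\ \eqref{e:u-3b-d}.

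The one delicate point is the integration-by-parts bookkeeping in part~\ref{l:u-3-b}: one must keep track of $M_s$ versus $M_{s-}$ in the jumps and check that the covariation term $[N,V]$ recombines with $\int_0^\cdot N_{s-}\,dV_s$ to produce $u(X_s)$---the \emph{post}-jump value---inside the integral against $\newAF$. This is precisely why the compensated functional $\newAF_t=\sum_{s\le t}\bigl(1-e^{-F(X_{s-},X_s)}\bigr)$, and not $A_t$ itself, appears in the formulas. Everything else is a routine localization-and-limit argument resting on the boundedness of $u$ and on Lemmas~\ref{l:u-1} and \ref{l:u-2}.
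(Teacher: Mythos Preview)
Your proof is correct and the localization is handled cleanly. The route, however, differs from the paper's. You obtain a \emph{pathwise} semimartingale decomposition of $u(X_t)=N_tV_t$ via integration by parts, identify $\int_0^t N_{s-}\,dV_s+[N,V]_t$ with $\int_0^t u(X_s)\,dB_s$, and then localize with the stopping times $T_n$ to kill the stochastic integral $\int V_{s-}\,dN_s$. The paper instead works directly with expectations and conditioning: it substitutes $u(X_s)=M_s^{-1}\E_x[M_\infty\mid\MM_s]$ into $\E_x\bigl[u(X_t)-\int_0^t u(X_s)\,d\newAF_s\bigr]$, pulls $M_\infty$ outside the conditional expectations by the tower property, and then uses the purely discontinuous identity $\sum_{s\le t}e^{A_s}\bigl(1-e^{-F(X_{s-},X_s)}\bigr)=e^{A_t}-1$ to telescope. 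Both arguments rest on the same two ingredients---Lemma~\ref{l:u-1} and the jump decomposition of $e^{A_t}$---but the paper's version is shorter and avoids any stochastic calculus or localization, while your version yields a pathwise identity (the semimartingale decomposition of $u(X_t)$) as a by-product, which is a bit more informative.
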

\begin{proof}
    We only prove part \ref{l:u-3-b} of the lemma, part \ref{l:u-3-a} follows in a similar way.

    We begin with  \eqref{e:u-3a-d}:  Set $\widetilde{F}(x,y) = 1-e^{-F(x,y)}$.  Because of \eqref{e:1a} we have
    \begin{align*}
        \E_x&\left[u(X_t)-\sum_{s\le t} u(X_s)\widetilde{F}(X_{s-},X_s) \right]\\
        &= \E_x\left[M_t^{-1}\E_x(M_{\infty} \mid \MM_t)-\sum_{s\le t} M_s^{-1}\E_x(M_{\infty} \mid \MM_s)\, \widetilde{F}(X_{s-},X_s)\right]\\
        &= \E_x\left[M_t^{-1}M_{\infty}\right]-\sum_{s\le t} \E_x\left[\E_x\left(M_s^{-1} M_{\infty} \widetilde{F}(X_{s-},X_s) \bigm| \MM_s\right)\right]\\
        &= \E_x\left[M_t^{-1} M_{\infty}\right]-\E_x\left[M_{\infty}\sum_{s\le t} M_s^{-1}  \widetilde{F}(X_{s-},X_s)\right]\\
        &= \E_x\left[M_{\infty}\,e^{A_t} \right]-\E_x\left[M_{\infty}\sum_{s\le t} e^{A_s}(1-e^{-F(X_{s-},X_s)})\right]\\
        &\stackrel{(*)}{=} \E_x\left[M_{\infty}\,e^{A_t} \right]-\E_x\left[M_{\infty}\big(e^{A_t}-1\big)\right]\\
        &= \E_x[M_{\infty}]\\
        &=u(x).
    \end{align*}
     In the equality marked by (*) we used the fact that $e^{A_t}$ is of purely discontinuous type, see \cite[top of p.~381]{Sh}; (in the corresponding calculation for continuous additive functionals we can use here $\int_0^t e^{A_s}\,dA_s = e^{A_t}-1$).

    Letting $t\to \infty$ in \eqref{e:u-3a-d} and a combination of Lemma \ref{l:u-2} with the  bounded and monotone convergence theorems gives \eqref{e:u-3b-d}.
\end{proof}

As a direct consequence of Lemma \ref{l:u-3} we see that the process $N=(N_t)_{t\ge 0}$, defined by $N_t:=u(X_t)-\int_0^t u(X_s)\, dA_s$, respectively $N_t:=u(X_t)-\int_0^t u(X_s)\, d\newAF_s$, is a $\Pp_x$-martingale for all $x\in \R^d$.

For a measurable set $D\subset \R^d$, denote by $\tau_D=\inf\{t>0:\, X_t\notin D\}$ the first exit time of $X$ from $D$. Since $u(X_s)M_s =\E[M_{\infty} \mid \MM_s]$ is a bounded martingale, cf.\ Lemma \ref{l:u-1}, the next lemma follows from optional stopping.
\begin{lemma}\label{l:u-4}
    For every open set $D\subset \R^d$ it holds that
    \begin{equation}\label{e:u-4}
        u(x)
        =\E_x\left[u(X_{\tau_D}) M_{\tau_D}\right]
        \quad \text{for all\ }x\in D.\footnotemark[1]
    \end{equation}
\end{lemma}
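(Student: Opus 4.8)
The plan is to derive \eqref{e:u-4} from Lemma \ref{l:u-1} via the optional stopping theorem. By Lemma \ref{l:u-1}, the process $\bigl(u(X_t)M_t\bigr)_{t\ge 0}$ is a bounded $\Pp_x$-martingale, and its value at time $0$ is $u(X_0)M_0 = u(x)$ since $M_0 = 1$. First I would fix an open set $D\subset\R^d$ and a point $x\in D$, and consider the stopping time $\tau_D = \inf\{t>0: X_t\notin D\}$. Applying the optional stopping theorem to the bounded martingale $u(X_t)M_t$ at the bounded stopping times $t\wedge\tau_D$ gives
\begin{equation*}
    u(x) = \E_x\bigl[u(X_{t\wedge\tau_D})M_{t\wedge\tau_D}\bigr]
    \quad\text{for all } t\ge 0.
\end{equation*}

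The remaining work is to let $t\to\infty$ and identify the limit. On the event $\{\tau_D<\infty\}$, the integrand converges to $u(X_{\tau_D})M_{\tau_D}$ by right-continuity of $s\mapsto u(X_s)$ (recall $(u(X_s))_{s\ge 0}$ is c\`adl\`ag by \cite[Theorem 2.5]{CS03}) and of $s\mapsto M_s = e^{-A_s}$. On the event $\{\tau_D=\infty\}$, we have $X_{t\wedge\tau_D} = X_t$, and I would use that $u(X_t)M_t$ converges $\Pp_x$-a.s.\ as $t\to\infty$ — this is exactly the content established in the proof of Lemma \ref{l:u-2}, where the a.s.\ limit of $u(X_t)M_t$ is identified as $\lim_{t\to\infty}u(X_t)\cdot M_\infty$, which under our standing assumption $\Pp_x(A_\infty<\infty)=1$ equals $1\cdot M_\infty = M_\infty$ by Lemma \ref{l:u-2}. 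In either case the convergence takes place, so $u(X_{t\wedge\tau_D})M_{t\wedge\tau_D}$ converges $\Pp_x$-a.s.\ to a random variable which on $\{\tau_D<\infty\}$ equals $u(X_{\tau_D})M_{\tau_D}$ and on $\{\tau_D=\infty\}$ equals $M_\infty$; writing $M_{\tau_D} := M_\infty = \lim_t M_t$ and $u(X_{\tau_D}) := \lim_t u(X_t) = 1$ on the latter event (which is the natural convention, and makes the right-hand side of \eqref{e:u-4} well defined), the limit is $u(X_{\tau_D})M_{\tau_D}$ throughout.

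Since all the random variables $u(X_{t\wedge\tau_D})M_{t\wedge\tau_D}$ are bounded by $1$ (as $0\le u\le 1$ and $0\le M\le 1$), the bounded (dominated) convergence theorem lets me pass to the limit in the displayed identity, yielding
\begin{equation*}
    u(x) = \E_x\bigl[u(X_{\tau_D})M_{\tau_D}\bigr],
\end{equation*}
which is \eqref{e:u-4}. I expect the only genuinely delicate point to be the bookkeeping on $\{\tau_D=\infty\}$: one must make sure the notation $u(X_{\tau_D})M_{\tau_D}$ is interpreted as the a.s.\ limit $\lim_{t\to\infty}u(X_t)M_t$ there, and invoke Lemma \ref{l:u-2} to see this limit is $M_\infty$, consistent with the convention $u(X_{\tau_D})=1$. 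Everything else — optional stopping for bounded martingales at bounded stopping times, and bounded convergence — is routine.
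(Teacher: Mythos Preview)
Your proof is correct and follows essentially the same approach as the paper, which simply notes that $u(X_s)M_s = \E_x[M_\infty \mid \MM_s]$ is a bounded (hence uniformly integrable) martingale and invokes optional stopping. Your version spells out the passage to the limit via $t\wedge\tau_D$ and bounded convergence, and your careful handling of the event $\{\tau_D=\infty\}$ matches exactly the paper's footnote convention that $u(X_{\tau_D})=1$ there by Lemma~\ref{l:u-2}.
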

\footnotetext[1]{Because of Lemma~\ref{l:u-2} the right-hand side is also defined if $\tau_D=+\infty$. In this case we have $u(X_{\tau_D}) = 1$ on the set $\{\tau_D=+\infty\}$.}

\begin{prop}\label{p:u-6}
\begin{enumerate}
\item\label{p:u-6-a}
    Let $A_t=\int_0^t f(X_s)\, ds$. If $\E_x[A_{\infty}]\le c<\infty$, then $u(x)\ge e^{-c}>0$.
    Conversely, the condition $\inf_{x\in \R^d} u(x)=c>0$ implies $\sup_{x\in \R^d} \E_x[A_{\infty}]\le c^{-1}-1$.
    Thus, $\inf_{x\in \R^d} u(x)>0$ if, and only, if $\sup_{x\in \R^d} \E_x[A_{\infty}]<\infty$.
\item\label{p:u-6-b}
    Let $A_t=\sum_{s\le t}F(X_{s-},X_s)$ and $\newAF_t:=\sum_{s\le t} \big(1-e^{-F(X_{s-},X_s)}\big)$. If $\E_x[A_{\infty}]\le c<\infty$, then $u(x)\ge e^{-c}>0$.
     Conversely,  the condition $\inf_{x\in \R^d} u(x)=c>0$ implies $\sup_{x\in \R^d} \E_x[\newAF_{\infty}]\le c^{-1}-1$.
     If, in addition,  $0\le F(x,y)\le C$ for all $x,y\in \R^d$, then $\inf_{x\in \R^d} u(x)=c>0$ implies $\sup_{x\in \R^d} \E_x[A_{\infty}]<\infty$.
\end{enumerate}
\end{prop}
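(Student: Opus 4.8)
The plan is to read off everything from the two potential-theoretic identities \eqref{e:u-3b-c} and \eqref{e:u-3b-d} of Lemma \ref{l:u-3}, supplemented by Jensen's inequality and one elementary convexity estimate. Recall that throughout we are assuming $\Pp_x(A_\infty<\infty)=1$, so that $M_\infty=e^{-A_\infty}>0$ and $u=\E_\bullet[M_\infty]$ takes values in $(0,1]$.

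For the ``easy'' implications in both \ref{p:u-6-a} and \ref{p:u-6-b}---namely $\E_x[A_\infty]\le c \Rightarrow u(x)\ge e^{-c}$---I would simply apply Jensen's inequality to the convex, decreasing function $t\mapsto e^{-t}$: $u(x)=\E_x[e^{-A_\infty}]\ge e^{-\E_x[A_\infty]}\ge e^{-c}$. No additive-functional structure is used here. For the converse in \ref{p:u-6-a}, set $c:=\inf_{x}u(x)>0$ and rewrite \eqref{e:u-3b-c} as $1-u(x)=\E_x\!\left[\int_0^\infty u(X_s)\,dA_s\right]$. Since $u\ge c$ pointwise on $\R^d$ and $\int_0^\infty dA_s=A_\infty$, the right-hand side is bounded below by $c\,\E_x[A_\infty]$, whence $c\,\E_x[A_\infty]\le 1-u(x)\le 1-c$, i.e.\ $\E_x[A_\infty]\le c^{-1}-1$ uniformly in $x$. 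The stated equivalence $\inf_x u(x)>0\iff\sup_x\E_x[A_\infty]<\infty$ then follows by combining the two directions.

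The converse in \ref{p:u-6-b} giving the bound on $\newAF_\infty$ proceeds verbatim with \eqref{e:u-3b-d} in place of \eqref{e:u-3b-c}: from $1-u(x)=\E_x\!\left[\int_0^\infty u(X_s)\,d\newAF_s\right]$ and $u\ge c$ one gets $c\,\E_x[\newAF_\infty]\le 1-u(x)\le 1-c$, hence $\E_x[\newAF_\infty]\le c^{-1}-1$. To pass from $\newAF_\infty$ to $A_\infty$ under the extra hypothesis $0\le F\le C$, I would use that $t\mapsto 1-e^{-t}$ is concave and vanishes at $0$, so on $[0,C]$ it dominates its own chord: $1-e^{-t}\ge\frac{1-e^{-C}}{C}\,t$ for $t\in[0,C]$. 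Summing this over the jumps (each $F(X_{s-},X_s)\in[0,C]$) yields $\newAF_\infty\ge\frac{1-e^{-C}}{C}\,A_\infty$, and therefore $\E_x[A_\infty]\le\frac{C}{1-e^{-C}}\,(c^{-1}-1)<\infty$ uniformly in $x$, which is the last claim.

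I do not anticipate a genuine obstacle here; the proof is a short deduction from Lemma \ref{l:u-3}. The only points requiring a little care are invoking the correct variant of that lemma in each of \ref{p:u-6-a}, \ref{p:u-6-b}, and observing that the bound $u(X_s)\ge c$ inside the (pathwise) integrals against $dA_s$ and $d\newAF_s$ holds regardless of whether one reads $X_s$ or $X_{s-}$, since $u\ge c$ is a pointwise inequality on the state space.
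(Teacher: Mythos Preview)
Your proof is correct and follows essentially the same route as the paper: Jensen's inequality for the direction $\E_x[A_\infty]\le c\Rightarrow u(x)\ge e^{-c}$, the identity \eqref{e:u-3b-c} (resp.\ \eqref{e:u-3b-d}) together with the pointwise bound $u\ge c$ for the converse, and the elementary comparison $1-e^{-t}\asymp t$ on $[0,C]$ to pass from $\newAF_\infty$ to $A_\infty$. Your chord inequality $1-e^{-t}\ge\frac{1-e^{-C}}{C}\,t$ on $[0,C]$ is in fact a cleaner way to record the constant than the paper's formulation.
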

\begin{proof}
    Again we  only  prove part \ref{p:u-6-b}, part \ref{p:u-6-a} being similar.

    Suppose that $\E_x[A_{\infty}]\le c<\infty$. Jensen's inequality for the convex function $e^{-t}$ yields
    $$
        0<e^{-c}\le e^{-\E_x[A_{\infty}]}\le \E_x\left[e^{-A_{\infty}}\right]=\E_x[M_{\infty}]=u(x).
    $$
    Conversely, assume that $\inf_{x\in \R^d} u(x)=c>0$. It follows from \eqref{e:u-3b-d} that
    $$
        1-c
        \ge \E_x\left[\int_0^{\infty}u(X_s)\, d\newAF_s\right]
        \ge c \, \E_x[\newAF_{\infty}],
    $$
    implying that  $\sup_{x\in \R^d} \E_x[\newAF_{\infty}]\le c^{-1}-1$.

    Assume now that $0\le F(x,y)\le C$ for all $x,y\in \R^d$. There exists some constant $\kappa=\kappa(C)\ge 1$ such that $\kappa\lambda \le 1-e^{-\lambda}\leq\lambda$ for all $\lambda\in [0,C]$. This implies that $\kappa F(x,y)\le \widetilde{F}(x,y) \leq F(x,y)$ for all $x,y\in \R^d$, hence $\kappa A_t\le \newAF_t \leq A_t$ for all $t\in [0,\infty]$. Therefore, $\sup_{x\in \R^d} \E_x[A_{\infty}]\le \kappa^{-1} \sup_{x\in \R^d} \E_x[\newAF_{\infty}] <\infty$.
\end{proof}

\begin{prop}\label{p:u-7}
    Assume that $X$ is a strong Feller process and  $\lim_{t\to 0} \sup_{x\in \R^d} \E_x [A_t] =0$.
    \begin{enumerate}
    \item\label{p:u-7-a}
        Let $A_t=\int_0^{\infty}f(X_s)\, ds$. Then $u(x) = \E_x[M_\infty]$ is continuous.
    \item\label{p:u-7-b}
        Let $A_t=\sum_{s\le t}F(X_{s-},X_s)$ with $F \geq 0$ bounded. Then $u(x) = \E_x[M_\infty]$ is continuous.
    \end{enumerate}
\end{prop}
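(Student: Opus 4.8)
The plan is to combine the martingale identity $u(x)=\E_x[u(X_t)M_t]$ from Lemma~\ref{l:u-1} with the strong Feller property, realizing $u$ as a uniform limit of the manifestly continuous functions $x\mapsto \E_x[u(X_t)]$ as $t\downarrow 0$. Parts \ref{p:u-7-a} and \ref{p:u-7-b} are handled by one and the same argument; the only place where the two settings differ is an elementary inequality used to bound $1-e^{-A_t}$.

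First I would note that $u$ takes values in $[0,1]$ and is Borel measurable (as recalled before Lemma~\ref{l:u-1}), so that $T_tu(x):=\E_x[u(X_t)]$ is a well-defined bounded function, and by the strong Feller assumption $T_tu\in C_b(\R^d)$ for every $t>0$. Next, starting from \eqref{e:u-1} I would split
$$
    u(x)=\E_x[u(X_t)M_t]=\E_x[u(X_t)]-\E_x\!\big[u(X_t)(1-M_t)\big]=T_tu(x)-R_t(x).
$$
Since $0\le u\le 1$ and, because $A_t\ge 0$ in both settings, $0\le 1-M_t=1-e^{-A_t}\le A_t$, one gets $0\le R_t(x)\le \E_x[1-M_t]\le \E_x[A_t]$, hence $\|R_t\|_\infty\le \sup_{x\in\R^d}\E_x[A_t]$.

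Finally, the hypothesis $\lim_{t\to 0}\sup_{x}\E_x[A_t]=0$ yields $\|T_tu-u\|_\infty=\|R_t\|_\infty\to 0$, so $u$ is the uniform limit of the continuous functions $T_{1/n}u$ and is therefore continuous; this settles both \ref{p:u-7-a} and \ref{p:u-7-b}. I do not expect a serious obstacle: the argument rests entirely on the strong Feller property (needed precisely because $u$ is a priori only known to be measurable, not continuous) together with the uniform smallness of $\E_x[A_t]$ near $t=0$; the bound $1-e^{-A_t}\le A_t$ is what makes the remainder term uniformly small, using only $f\ge 0$ in part \ref{p:u-7-a} and $F\ge 0$ in part \ref{p:u-7-b}.
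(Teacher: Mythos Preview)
Your argument is correct and follows the same overall strategy as the paper: show that $u$ is the uniform limit, as $t\downarrow 0$, of the continuous functions $x\mapsto \E_x[u(X_t)]$ by bounding the remainder uniformly via $\sup_x\E_x[A_t]$. The difference is in how the remainder is produced. The paper invokes Lemma~\ref{l:u-3} to write
\[
    u(x)-\E_x[u(X_t)]=-\E_x\!\left[\int_0^t u(X_s)\,d\newAF_s\right]
\]
(and the analogous identity with $dA_s$ in part~\ref{p:u-7-a}), and then bounds the right-hand side by $\E_x[\newAF_t]\le\kappa^{-1}\E_x[A_t]$, using the boundedness of $F$ for the last step. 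You instead work directly from Lemma~\ref{l:u-1}, writing $u(x)-\E_x[u(X_t)]=-\E_x[u(X_t)(1-M_t)]$ and bounding $1-M_t=1-e^{-A_t}\le A_t$. Your route is slightly more economical: it bypasses Lemma~\ref{l:u-3} entirely, does not need the constant $\kappa$ (hence does not use the boundedness of $F$ in part~\ref{p:u-7-b}, only $F\ge 0$), and treats both cases at once. The paper's route costs nothing extra in context, since Lemma~\ref{l:u-3} is proved and used elsewhere anyway.
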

\begin{proof} We prove \ref{p:u-7-b},  as part \ref{p:u-7-a} is similar.
    From \eqref{e:u-3a-d} we know that
    $$
        u(x)-\E_x[u(X_t)]=\E_x \left[\int_0^t u(X_s)\, d\newAF_s\right].
    $$
    By the strong Feller property, $x\mapsto \E_x[u(X_t)]$ is continuous. On the other hand, by using notation from the proof of Proposition \ref{p:u-6}\ref{p:u-6-b},
    $$
        \E_x\left[\int_0^t u(X_s)\, d\newAF_s\right]
        \le \E_x \left[\int_0^t  d\newAF_s\right]
        =\E_x[\newAF_t]
        \le  \frac 1\kappa\, \E_x[A_t],
    $$
    which converges, by assumption, uniformly to zero as $t\to 0$. Thus $u$ is the uniform limit of the continuous functions $x\mapsto \E_x[u(X_t)]$ and, therefore, itself continuous.
\end{proof}

\begin{example}\label{ex:BM-radial}
    Let $X=(X_t,\Pp_x)$ be a Brownian motion in $\R^d$, $d\ge 3$. Assume that $f:\R^d \to [0,\infty)$ is bounded and radial, i.e.\ $f(x)=f(y)$ if $|x|=|y|$. Let $A_t:=\int_0^t f(X_s)\, ds$, set $M_t:=e^{-A_t}$, assume that $A_{\infty}<\infty$ $\Pp_x$~a.s., and let $u(x):=\E_x[M_{\infty}]$. Then:

    \medskip
    $u$ is radial: Let $|x|=|y|$ and assume that $U$ is a rotation around the origin such that $Ux=y$.  Since $f$ is radial, we have $\int_0^{\infty}f(U X_s)\, ds=\int_0^{\infty} f(X_s)\, ds$. This implies that the distributions of $A_{\infty}$ under $\Pp_y$ and $\Pp_x$ coincide.

    \medskip
    $u$ is continuous: This follows from Proposition \ref{p:u-7}\ref{p:u-7-a}.

    \medskip
    $u$ is bounded from below: It is enough to prove that $\liminf_{|x|\to \infty} u(x)>0$. Otherwise there would exist a sequence $(r_n)_{n\ge 1}$ such that $\lim_{n\to \infty}r_n=\infty$ and $u(x)\le 2^{-n}$ for $|x|=r_n$. Since $X$ has a.s.~continuous paths and $\limsup_{t\to \infty}|X_t|=+\infty$ a.s.,
    there exists for every $n\ge 1$ some $t_n=t_n(\omega)$ such that $|X_{t_n}(\omega)|=r_n$, hence $u(X_{t_n}(\omega))\le 2^{-n}$. This, however, contradicts the fact that $\lim_{t\to \infty}u(X_t)=1$ a.s.

    \medskip\noindent
    It follows from Proposition \ref{p:u-6}\ref{p:u-6-a} that $\sup_{x\in \R^d} \E_x A_{\infty} <\infty$.
\end{example}

\begin{example}\label{ex:regular-diffusion}
    Suppose that $X=(X_t,\Pp_x)$ is a strong Markov process on $\R$ such that $\lim_{t\to \infty}X_t=+\infty$. Assume further that $\Pp_y(X_{T_x}=x)=1$ for all $y<x$ where $T_x:=\inf\{t>0\,:\, X_t\ge x\}$ is the first entry time into $[x,\infty)$. This condition is, for example, satisfied if $X$ is a regular diffusion or if $X$ is a spectrally negative (i.e.\ without positive jumps) L\'evy process.
    For any measurable and locally bounded $f:\R\to [0,\infty)$ define $A_t:=\int_0^t f(X_s)\, ds$ and $M_t:=e^{-A_t}$. Assume that $A_{\infty}<\infty$ $\Pp_x$ a.s.~for all $x\in \R$, and let $u(x):=\E_x[M_{\infty}]$. Then we have for $y<x$
    \begin{equation}\label{e:regular-diffusion}
        u(y)
        =\E_y[M_{\infty}]
        =\E_y[M_{T_x}(M_{\infty}\circ \theta_{T_x})]
        \le \E_y[M_{\infty}\circ \theta_{T_x}]
        =\E_y\left[\E_{X_{T_x}} [M_{\infty}]\right]
        = u(x),
    \end{equation}
    showing that $u$ is non-decreasing.

    For $x\in \R$ we define $f_x(y):=f(y) \I_{[x,\infty)}(y)$, $A^x_t:=\int_0^t f_x(X_s)\, ds$, $M^x_t:=e^{-A^x_t}$ and $L_x:=\sup\{t>0\,:\, X_t=x\}$. Since $\lim_{t\to \infty}X_t=+\infty$, we have that $\Pp_y(L_x<\infty)=1$ for all $y$. Furthermore,
    $$
        A_{\infty}
        = \int_0^{\infty} f(X_s)\, ds
        = \int_0^{L_x}f(X_s)\, ds + \int_{L_x}^{\infty}f(X_s)\, ds
        = \int_0^{L_x}f(X_s)\, ds +  \int_{L_x}^{\infty}f_x(X_s)\, ds,
    $$
    implies that $A_{\infty}<\infty$ $\Pp_y$ a.s.~if, and only, if $A^x_{\infty}<\infty$ $\Pp_y$ a.s.
    If $u^x(y):=\E_y[M^x_{\infty}]$, then the same calculation as in \eqref{e:regular-diffusion} together with $\Pp_y(M^x_{T_x}=1)=1$ for $y<x$, shows that $u^x(y)=u^x(x)$ for all $y<x$. Since $u^x$ is non-decreasing, we conclude that $\inf_{y\in \R}u^x(y)=u^x(x)>0$. Now it follows from Proposition \ref{p:u-6}\ref{p:u-6-a} that
    $$
        \sup_{y\in \R} \E_y A^x_{\infty} \le u^x(x)^{-1} -1 <\infty.
    $$
    Since $\E_y A^x_{\infty}<\infty$ implies $\Pp_y\big(A^x_{\infty}<\infty\big)=1$, we have shown that the following four statements are equivalent:
    \begin{enumerate}
    \item [(i)] $\Pp_x\big(A_{\infty}<\infty\big)=1$ for all $x\in \R$;
    \item[(ii)] $\Pp_x\big(A^x_{\infty}<\infty\big)=1$ for all $x\in \R$;
    \item[(iii)] $\E_x A^x_{\infty} <\infty$ for all $x\in \R$;
    \item[(iv)] $\sup_{y\in \R} \E_y A^x_{\infty} <\infty$ for all $x\in \R$.
    \end{enumerate}
    This gives an alternative proof of the equivalences (i)--(iv) from \cite[Theorem 3]{KSY} (their setting is slightly more general since the state space is an arbitrary interval $(l,r)\subset \R$ and the lifetime $\zeta$ can be finite; but our proofs are easily adapted to that setting). Note that even for a bounded $f$ we cannot conclude that $\E_x A_{\infty}<\infty$, because of the lack of control of $u(x)$ as $x\to -\infty$.
\end{example}

\section{Isotropic stable L\'evy processes}\label{sec-4}
In this section we will assume that $X$ is an isotropic $\alpha$-stable L\'evy process in $\R^d$, $0<\alpha<2 \wedge d$. Recall that a L\'evy process is a stochastic process with independent and stationary increments and c\`adl\`ag paths. The transition function is uniquely determined by the characteristic function of $X_t$ which is, in the case of an isotropic stable L\'evy process, $\exp(-t|\xi|^\alpha)$.
In particular, $X_t$ has a continuous transition density $p(t,x,y) = p(t,x-y)$, $t > 0$, $x,y\in \R^d$. From the L\'evy--Khintchine representation
$$
    |\xi|^\alpha = \frac{\alpha 2^{\alpha-1}\Gamma\big(\frac{\alpha+d}2\big)}{\pi^{d/2}\Gamma\big(1-\frac\alpha 2\big)} \int_{\R^d\setminus\{0\}} (1-\cos y\cdot\xi)\,\frac{dy}{|y|^{d+\alpha}}
$$
we see that the L\'evy measure is $\tilde c(d,\alpha) |y|^{-d-\alpha}\,dy$, and because of the stationarity of the increments, we see that the L\'evy system $(N,H)$ is of the form $N(x,dy) = j(x,y)\,dy$ and $H_s(\omega)\equiv s$ with $j(x,y)=\tilde{c}(d,\alpha)|x-y|^{-d-\alpha}$. The corresponding  Dirichlet form $(\EE, \FF)$ of $X$ is
$$
    \FF=\left\{f\in L^2(\R^d,dx)\,:\, \int_{\R^d}\int_{\R^d} (f(x)-f(y))^2 j(x,y)\, dx \, dy <\infty\right\},
$$
and
$$
    \EE(f,f)=\int_{\R^d}\int_{\R^d} (f(x)-f(y))^2 j(x,y)\, dx \, dy,\quad f\in \FF.
$$

Let $F\in I_2(X)$ such that $\inf_{x,y\in \R^d} F(x,y)>-1$, and let $\widetilde{X}=(\widetilde{X}_t, \MM, \MM_t, \widetilde{\Pp}_x)$ be the corresponding  purely discontinuous Girsanov transform of $X$. It follows from \cite[Lemma 2.1 and Theorem 2.5]{S} that the semigroup of $\widetilde{X}$ is symmetric (with respect to Lebesgue measure), and that the Dirichlet form $(\widetilde{\EE}, \widetilde{\FF})$ of $\widetilde{X}$ in $L^2(\R^d, dx)$ is given by $\widetilde{\FF}=\FF$, and
$$
    \widetilde{\EE}(f,f)=\int_{\R^d}\int_{\R^d} (f(x)-f(y))^2 (1+F(x,y))j(x,y)\, dx\, dy, \quad f\in \widetilde{\FF}.
$$
Since the killing measure of $\widetilde{X}$ is zero and $1+F(x,y)$ is bounded from below and above by positive constants, we conclude that $\widetilde{X}$ is conservative. By \cite[Theorem 2.7]{S}, $\widetilde{X}$ has continuous transition densities $\tilde{p}(t,x,y)$, $t> 0$, $x,y\in \R^d$.

Since $\alpha<d$, $X$ is transient and has  the Green function $G(x,y)=c(d,\alpha)|x-y|^{\alpha-d}$, where $c(d,\alpha)=2^{-\alpha}\pi^{-d/2}\Gamma\big(\frac{d-\alpha}{2}\big)/\Gamma\big(\frac\alpha 2\big)$. In this case, $\widetilde{X}$ also admits a Green function $\widetilde{G}(x,y)$ which satisfies the following two-sided estimates, cf.~\cite[Corollary 2.8]{S}:
\begin{equation}\label{e:green-fe}
    c^{-1} G(x,y)\le \widetilde{G}(x,y) \le c\, G(x,y), \quad x,y\in \R^d,
\end{equation}
for some constant $c\ge 1$.

\begin{defn}
    A measurable function $f:\R^d \to \R$ is said to be in the \emph{Kato class} $K(X)$ if
    $$
        \lim_{t\to 0} \sup_{x\in \R} \E_x\left(\int_0^t |f(X_t)|\, dt\right) =0.
    $$
\end{defn}
If $F:\R^d\times \R^d\to\real$ is a symmetric, bounded measurable function vanishing on the diagonal, then $F\in J(X)$, cf.\ Definition \ref{d:i2x}, if, and only, if $h\in K(X)$ where
\begin{equation}\label{e:def-of-h}
    h(y):=\int_{\R^d} F(y,z) j(y,z)\, dz.
\end{equation}

\begin{defn}\label{d:iCb}
    Let $C>0$ and $\beta>0$. A bounded symmetric function $F:\R^d\times \R^d\to\R$ vanishing on the diagonal is in the class $I(C,\beta)$ if
    $$
        |F(x,y)|\le C |x-y|^{\beta} \quad\text{for all\ } x,y\in \R^d.
    $$
\end{defn}
We note that $F\in I(C,\beta)$ if, and only, if $F$ is symmetric and $|F(x,y)|\le \widehat{C}(|x-y|^{\beta}\wedge 1)$ for some constant $\widehat{C}>0$.
It is proved in \cite[Example p.~492]{S} that $I(C,\beta)\subset I_2(X)$ if $\beta >\alpha/2$. Similarly, we have the following simple result.

\begin{lemma}\label{l:kato-class}
    If $F\in I(C,\beta)$ for $\beta>\alpha$, then $F\in J(X)$.

    Consequently, if $A_t:=\sum_{s\le t}F(X_{s-},X_s)$, then $\lim_{t\to 0}\sup_{x\in \R^d} \E_x A_t=0$.
\end{lemma}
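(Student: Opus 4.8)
\emph{Plan of proof.} The idea is to reduce the first assertion to the boundedness of a single function on $\R^d$, and then to obtain that boundedness from a direct estimate of a radial integral; the hypothesis $\beta>\alpha$ enters exactly at the point where integrability near the diagonal is required.

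First I would unwind the definition of $J(X)$ in the present situation. Since the L\'evy system of $X$ is $(N(x,dy),H_s)=(j(x,y)\,dy,\,s)$ with $j(x,y)=\tilde c(d,\alpha)|x-y|^{-d-\alpha}$, Definition~\ref{d:i2x} says that $F\in J(X)$ is equivalent to
$$
    \lim_{t\to 0}\ \sup_{x\in\R^d}\ \E_x\!\left[\int_0^t h(X_s)\,ds\right]=0,
    \qquad
    h(y):=\int_{\R^d}|F(y,z)|\,j(y,z)\,dz .
$$
Now, for any right Markov process and any bounded measurable $h\ge 0$ one trivially has $\sup_{x}\E_x\!\int_0^t h(X_s)\,ds\le t\,\|h\|_\infty\to 0$ as $t\to 0$ (so in particular such an $h$ lies in $K(X)$). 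Hence it suffices to prove that $h$ is bounded on $\R^d$.

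To bound $h$, I would invoke the reformulation recorded just after Definition~\ref{d:iCb}: $F\in I(C,\beta)$ means $F$ is symmetric and $|F(y,z)|\le\widehat C\,(|y-z|^{\beta}\wedge 1)$ for some $\widehat C>0$. Substituting $w=z-y$ and using translation invariance of Lebesgue measure,
$$
    h(y)\ \le\ \widehat C\,\tilde c(d,\alpha)\int_{\R^d}\frac{|w|^{\beta}\wedge 1}{|w|^{d+\alpha}}\,dw\ =:\ K ,
$$
a constant independent of $y$. Passing to polar coordinates, $K<\infty$ because $\int_{\{|w|\le 1\}}|w|^{\beta-d-\alpha}\,dw\asymp\int_0^1 r^{\beta-\alpha-1}\,dr<\infty$ — this is exactly where $\beta>\alpha$ is used — while $\int_{\{|w|>1\}}|w|^{-d-\alpha}\,dw<\infty$ since $\alpha>0$. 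Thus $\|h\|_\infty\le K<\infty$, so $F\in J(X)$. For the final assertion, writing $A_t=\sum_{s\le t}F(X_{s-},X_s)$, the integrability built into $F\in J(X)$ together with the L\'evy-system formula gives $\E_x[A_t]\le\E_x\big[\sum_{s\le t}|F(X_{s-},X_s)|\big]=\E_x\!\int_0^t h(X_s)\,ds\le t\,\|h\|_\infty$, hence $\sup_{x\in\R^d}\E_x[A_t]\le t\,\|h\|_\infty\to 0$.

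I do not expect a genuine obstacle here: the only substantive step is the convergence of the radial integral near the origin, and $\beta>\alpha$ is precisely the exponent condition needed for it. It is worth observing that this is strictly stronger than the condition $\beta>\alpha/2$ which guarantees $F\in I_2(X)$; this is consistent with the fact that $J(X)$ is an $L^1$-type condition on the jump intensity, while $I_2(X)$ is only an $L^2$-type condition.
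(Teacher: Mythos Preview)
Your proof is correct and follows essentially the same approach as the paper: both reduce the $J(X)$ condition to showing that $h(y)=\int_{\R^d}|F(y,z)|\,j(y,z)\,dz$ is bounded, split the resulting radial integral at $|y-z|=1$, and use $\beta>\alpha$ for the inner piece to conclude $\E_x[A_t]\le t\|h\|_\infty$. Your version is slightly more careful in carrying the absolute value on $F$ throughout.
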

\begin{proof}
    By the properties of the L\'evy system we have
    \begin{align*}
        \E_x [A_t]
        = \E_x \left[\sum_{s\le t} F(X_{s-}, X_s) \right]
        &= \E_x \left[\int_0^t\!\!\! \int_{\R^d} F(X_{s-},y) j(X_{s-},y)\, dy \, ds \right]\\
        &= \E_x \left[\int_0^t h(X_s)\, ds\right]
    \end{align*}
    where
    \begin{align*}
        h(y)
        &=\int_{\R^d} F(y,z)j(y,z)\, dz \\
        &\le c_1 \int_{\R^d}\big(|y-z|^{\beta}\wedge 1 \big) |y-z|^{-d-\alpha}\, dz\\
        &= c_1\left(\int_{|y-z|\le 1} |y-z|^{\beta-d-\alpha}\, dz +\int_{|y-z|>1} |y-z|^{-d-\alpha}\, dz\right) = c_2 <\infty.
    \end{align*}
    Therefore, $\E_x [A_t] =\E_x\left[\int_0^t h(X_s)\, ds\right] \le c_2 t $ which implies the statement.
\end{proof}

Denote by $B(x_0,r)$ the open ball centred at $x_0$ with radius $r>0$, and by $G_{B(x_0,r)}$ the Green function of the process $X$ killed upon exiting $B(x_0,r)$. To simplify notation we write $B=B(0,1)$.
\begin{lemma}\label{l:3G-estimate}
    Let $\beta >\alpha$. There exists a constant $C_1(d,\alpha,\beta)<\infty$ such that
    $$
        \sup_{x,w\in B} \int_B\int_B \frac{G_B(x,y)G_B(z,w)}{G_B(x,w)}\, |y-z|^{\beta-\alpha-d}\, dz\, dy =C_1(d,\alpha,\beta).
    $$
\end{lemma}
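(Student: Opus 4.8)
The plan is to reduce the claimed uniform bound to the classical $3G$-inequality for the killed stable process together with an elementary integrability computation. Recall that for the isotropic $\alpha$-stable process, $G_B(x,y)\le G(x,y)=c(d,\alpha)|x-y|^{\alpha-d}$, and that there is a well-known $3G$-theorem (see, e.g., Chen--Song or Bogdan) asserting
\[
    \frac{G_B(x,y)\,G_B(z,w)}{G_B(x,w)}
    \le C_0\Big(\frac{G_B(x,y)\,G_B(z,w)}{G_B(x,z)}\vee\frac{G_B(x,y)\,G_B(z,w)}{G_B(z,w)}\Big)
    \le C_0\big(G_B(x,z)\wedge\text{(symmetric terms)}\big),
\]
so that in any case $G_B(x,y)G_B(z,w)/G_B(x,w)\le C_0\big(G_B(x,z)+G_B(y,w)+G_B(y,z)\big)$ for all $x,y,z,w\in B$, with $C_0=C_0(d,\alpha)$. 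First I would invoke exactly this estimate. Because $G_B\le G$, it then suffices to bound
\[
    \sup_{x,w\in B}\int_B\int_B \big(|x-z|^{\alpha-d}+|y-w|^{\alpha-d}+|y-z|^{\alpha-d}\big)\,|y-z|^{\beta-\alpha-d}\,dz\,dy
\]
by a finite constant depending only on $d,\alpha,\beta$.

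The second step is the integrability check, which splits into three pieces. For the term with $|y-z|^{\alpha-d}|y-z|^{\beta-\alpha-d}=|y-z|^{\beta-2d}$: integrating first in $z$ over $B$ against $|y-z|^{\beta-2d}$ is finite near the singularity provided $\beta-2d>-d$, i.e. $\beta>d$, which is \emph{not} assumed—so instead one uses $\beta-2d = (\beta-\alpha-d)+(\alpha-d)$ and keeps the two kernels separate: integrate $|y-z|^{\alpha-d}\,dz$ over a fixed ball to get a bounded function of $y$ (since $\alpha-d>-d$), and then the remaining factor... actually the cleanest route is: over the bounded set $B$, $|y-z|^{\alpha-d}\le |y-z|^{\alpha-d}$ and the product $|y-z|^{\beta-2d}$ is integrable in $z\in B$ iff $\beta>d$. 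To avoid this I would instead bound $|y-z|^{\alpha-d}\le \operatorname{diam}(B)^{?}\cdots$—no. The correct and honest handling: since $\beta>\alpha$ we have $\beta-\alpha-d>-d$, so $\int_B |y-z|^{\beta-\alpha-d}\,dz \le c_3<\infty$ uniformly in $y\in B$; and since $\alpha-d>-d$, $\int_B|x-z|^{\alpha-d}\,dz\le c_4$ uniformly in $x$. For the mixed term $\int_B\int_B|x-z|^{\alpha-d}|y-z|^{\beta-\alpha-d}\,dz\,dy$ one integrates in $y$ first (giving $\le c_3$ uniformly) and then in $z$ (giving $\le c_4$); similarly for $\int_B\int_B|y-w|^{\alpha-d}|y-z|^{\beta-\alpha-d}\,dz\,dy$ integrate in $z$ first then $y$. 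The genuinely delicate term is $\int_B\int_B|y-z|^{\beta-2d}\,dz\,dy$: here I substitute $u=y-z$ and note that for fixed $y$, $\int_{B-y}|u|^{\beta-2d}\,du$; writing $\beta-2d=(\beta-\alpha-d)+(\alpha-d)$ does not immediately split an integral, so one must use the convolution-type estimate: on a bounded domain, $\iint_{B\times B}|y-z|^{-\lambda}\,dy\,dz<\infty$ iff $\lambda<2d$, and indeed $2d-\beta<2d$ always. Hence this term is finite. Thus all three contributions are bounded by a constant $C_1(d,\alpha,\beta)$, uniformly in $x,w\in B$.

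The main obstacle is making sure the exponent bookkeeping in the third (diagonal) term is airtight: one needs $\beta-2d>-2d$ for two-dimensional integrability of $|y-z|^{\beta-2d}$ over $B\times B$, which holds because $\beta>0$; the hypothesis $\beta>\alpha$ is what guarantees that the \emph{single} integrals $\int_B|y-z|^{\beta-\alpha-d}\,dz$ appearing after applying the $3G$-inequality converge. I would state the needed $3G$-inequality as a cited fact for the stable process killed in a ball, carry out the three elementary integrations with the substitution and polar coordinates, and collect the resulting finite constants into $C_1(d,\alpha,\beta)$; the supremum over $x,w\in B$ is harmless because all the bounds obtained are independent of $x$ and $w$.
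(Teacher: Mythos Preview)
Your proof has a genuine gap: the four-point ``$3G$'' inequality you invoke,
\[
    \frac{G_B(x,y)\,G_B(z,w)}{G_B(x,w)}\le C_0\big(G_B(x,z)+G_B(y,w)+G_B(y,z)\big),
\]
is false. Take all four points well inside $B$, with $x$ very close to $y$ and $z$ very close to $w$, but with $\{x,y\}$ at a fixed positive distance from $\{z,w\}$ (say $x=0$, $y=\epsilon e_1$, $z=\tfrac12 e_1$, $w=(\tfrac12+\epsilon)e_1$). Then the left-hand side behaves like $\epsilon^{2(\alpha-d)}\to\infty$ as $\epsilon\to 0$, while each of $|x-z|$, $|y-w|$, $|y-z|$ stays bounded away from zero, so the right-hand side remains bounded. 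The classical $3G$ theorem is a three-point statement $G_B(x,y)G_B(y,z)/G_B(x,z)\le C(G(x,y)+G(y,z))$ and does not upgrade to the four-point form you need.

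There is a second, independent error in your integrability check: the diagonal term $\iint_{B\times B}|y-z|^{\beta-2d}\,dz\,dy$ is finite iff $2d-\beta<d$, i.e.\ $\beta>d$, not $\beta>0$ as you claim (the condition $\iint_{B\times B}|y-z|^{-\lambda}\,dy\,dz<\infty\Leftrightarrow\lambda<2d$ is wrong; the correct threshold is $\lambda<d$). Since only $\beta>\alpha$ is assumed and $\alpha<d$, this term need not converge. This is in fact consistent with the failure of your pointwise bound: a correct majorant cannot contain $G_B(y,z)$ alone.

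The paper proceeds differently. It uses the sharp two-sided estimates for $G_B$ to bound the four-point ratio by
\[
    c_1\,\frac{|x-w|^{d-\alpha}}{|x-y|^{d-\alpha}|z-w|^{d-\alpha}}
    \quad\text{resp.}\quad
    c_2\,\frac{|x-w|^{d-\alpha/2}}{|x-y|^{d-\alpha/2}|z-w|^{d-\alpha/2}},
\]
then expands $|x-w|^{d-\alpha}\lesssim |x-y|^{d-\alpha}+|y-z|^{d-\alpha}+|z-w|^{d-\alpha}$. The resulting ``middle'' term keeps the \emph{separate} singularities $|x-y|^{-(d-\alpha)}$ and $|z-w|^{-(d-\alpha)}$, which are each integrable in their own variable, and the coupling with $|y-z|^{\beta-\alpha-d}$ is controlled by H\"older's inequality. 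That product structure is exactly what your additive bound loses, and it is essential.
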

\begin{proof}
    We follow the arguments from \cite[Example 2]{CS03}. From the sharp estimates of the Green function $G_B$ we get: If $\delta_B(x)=\mathrm{dist}(x, B^c)$ is the distance to the boundary, there exists a constant $c_1=c_1(d,\alpha)$ such that
    \begin{gather*}
        \frac{G_B(x,y)G_B(z,w)}{G_B(x,w)}
        \leq
        \begin{cases}
            \dfrac{c_1\,|x-w|^{d-\alpha}}{|x-y|^{d-\alpha}|z-w|^{d-\alpha}},         & |x-w|\leq\frac12 \max\{\delta_B(x), \delta_B(w)\},\\[\bigskipamount]
            \dfrac{c_2 \, |x-w|^{d-\alpha/2}}{|x-y|^{d-\alpha/2}|z-w|^{d-\alpha/2}},   & |x-w|>\frac12 \max\{\delta_B(x), \delta_B(w)\}.
        \end{cases}
    \end{gather*}
    These estimates imply that
    \begin{align*}
        &\frac{G_B(x,y)G_B(z,w)}{G_B(x,w)}\, |y-z|^{\beta-\alpha-d}\\
        &\le  c_1\left[\frac{|x-w|^{d-\alpha}}{|x-y|^{d-\alpha}|z-w|^{d-\alpha}|y-z|^{d+\alpha-\beta}} +\frac{|x-w|^{d-\alpha/2}}{|x-y|^{d-\alpha/2}|z-w|^{d-\alpha/2}|y-z|^{d+\alpha-\beta}}\right]\\
        &\le c_2\left[\frac{|x-y|^{d-\alpha}+|y-z|^{d-\alpha}+|z-w|^{d-\alpha}}{|x-y|^{d-\alpha}|z-w|^{d-\alpha}|y-z|^{d+\alpha-\beta}} +\frac{|x-y|^{d-\alpha/2}+|y-z|^{d-\alpha/2}+|z-w|^{d-\alpha/2}}{|x-y|^{d-\alpha/2}|z-w|^{d-\alpha/2}|y-z|^{d+\alpha-\beta}}\right]\\
        &= c_2 \Bigg[\frac{1}{|z-w|^{d-\alpha}|y-z|^{d+\alpha-\beta}} + \frac{1}{|x-y|^{d-\alpha}|y-z|^{d+\alpha-\beta}}\\
        &\quad\qquad\mbox{} +\frac{1}{|z-w|^{d-\alpha/2}|y-z|^{d+\alpha-\beta}} + \frac{1}{|x-y|^{d-\alpha/2}|y-z|^{d+\alpha-\beta}}\\
        &\quad\qquad\mbox{} + \frac{1}{|x-y|^{d-\alpha}|z-w|^{d-\alpha}|y-z|^{2\alpha-\beta}} +\frac{1}{|x-y|^{d-\alpha/2}|z-w|^{d-\alpha/2}|y-z|^{3\alpha/2-\beta}}\Bigg]
    \end{align*}
    where $c_2=c_2(d,\alpha)$. The integrals of the first four terms are estimated in the same way, so we only do the first one. Since $\beta>\alpha>0$, we have that
    \begin{align*}
        \int_B\int_B \frac{1}{|z-w|^{d-\alpha}|y-z|^{d+\alpha-\beta}}\, dz\, dy
        &\le  \int_{B(w,2)}\frac{1}{|w-z|^{d-\alpha}}\left( \int_{B(z,2)}\frac{1}{|y-z|^{d+\alpha-\beta}}\, dy\right)\, dz \\ &=c_3(d,\alpha,\beta)<\infty.
    \end{align*}
    In order to estimate the fifth term we use H\"older's inequality with $\frac{2\alpha-\beta}{d}<\frac{1}{q} < \frac{\alpha}{d}$ (note that $2\alpha-\beta<\alpha$) and $\frac{1}{p}=1-\frac{1}{q}>1-\frac{\alpha}{d}$ to get
    \begin{align*}
        \int_B \frac{dz}{|z-w|^{d-\alpha}|y-z|^{2\alpha-\beta}}
        &\le  \left(\int_B |z-w|^{(\alpha-d)p}\, dz\right)^{1/p}\, \left(\int_B |y-z|^{(\beta-2\alpha)q}\, dz\right)^{1/q}\\
        &\le  \left(\int_{B(w,2)} |z-w|^{(\alpha-d)p}\, dz\right)^{1/p}\, \left(\int_{B(y,2)} |y-z|^{(\beta-2\alpha)q}\, dz\right)^{1/q}\\
        &= c_4(d,\alpha,\beta)<\infty.
    \end{align*}
    Therefore
    $$
        \int_B\int_B \frac{dz\,dy}{|x-y|^{d-\alpha}|z-w|^{d-\alpha}|y-z|^{2\alpha-\beta}}
        \le c_4(d,\alpha,\beta)\int_{B(x,2)}\frac{dy}{|x-y|^{d-\alpha}}
        \le c_5(d,\alpha,\beta).
    $$
    The integral of the sixth term is estimated in the same way.
\end{proof}

\begin{lemma}\label{l:key-lemma}
    For every $\epsilon >0$ there exists some $r_0=r_0(C,d,\alpha,\beta,\epsilon)>0$ such that for every $r\le r_0$, $x,w\in B(x_0,r)$ and $F\in I(C,\beta)$ it holds that
    \begin{equation}\label{e:key-lemma-1}
        \int_{B(x_0,r)}\int_{B(x_0,r)}\frac{G_{B(x_0,r)}(x,y)G_{B(x_0,r)}(z,w)}{G_{B(x_0,r)}(x,w)}\, |F(y,z)| \, |y-z|^{-\alpha-d}\, dz\, dy <\epsilon,
    \end{equation}
\end{lemma}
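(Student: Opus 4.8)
The plan is to reduce \eqref{e:key-lemma-1} to the scale-invariant estimate of Lemma \ref{l:3G-estimate} by exploiting the self-similarity of the isotropic $\alpha$-stable process. Since $F\in I(C,\beta)$, we have $|F(y,z)|\le C|y-z|^{\beta}$, so the left-hand side of \eqref{e:key-lemma-1} is dominated by
$$
    C\int_{B(x_0,r)}\int_{B(x_0,r)}\frac{G_{B(x_0,r)}(x,y)\,G_{B(x_0,r)}(z,w)}{G_{B(x_0,r)}(x,w)}\,|y-z|^{\beta-\alpha-d}\,dz\,dy;
$$
as in Lemma \ref{l:3G-estimate} we use $\beta>\alpha$ here, which is also what makes the integrand integrable across the diagonal $y=z$. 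It then suffices to bound this integral by a constant multiple of $r^{\beta}$, uniformly in $x,w\in B(x_0,r)$, and then to pick $r_0$ so small that the resulting bound is $<\epsilon$.

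First I would record the scaling identity for the Green function of a ball. Translation invariance of $X$ gives $G_{B(x_0,r)}(x,y)=G_{B(0,r)}(x-x_0,y-x_0)$, while the $1/\alpha$-self-similarity — the process $(r^{-1}X_{r^{\alpha}t})_{t\ge0}$ is again isotropic $\alpha$-stable, and the exit time of $B(0,r)$ rescales consistently — gives, for the killed transition densities, $p_{B(0,r)}(t,x,y)=r^{-d}p_{B(0,1)}(r^{-\alpha}t,r^{-1}x,r^{-1}y)$. Integrating in $t$ yields
$$
    G_{B(x_0,r)}(x,y)=r^{\alpha-d}\,G_B\!\left(\frac{x-x_0}{r},\frac{y-x_0}{r}\right),\qquad x,y\in B(x_0,r),
$$
where $B=B(0,1)$; this is the localized counterpart of the full-space identity $G(x,y)=c(d,\alpha)|x-y|^{\alpha-d}$.

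Next I would substitute $x=x_0+r\hat x$, $w=x_0+r\hat w$ and change variables $y=x_0+r\hat y$, $z=x_0+r\hat z$. The ratio of Green functions contributes a factor $r^{\alpha-d}$, the factor $|y-z|^{\beta-\alpha-d}$ contributes $r^{\beta-\alpha-d}$, and $dz\,dy$ contributes $r^{2d}$; the exponents add up to $(\alpha-d)+(\beta-\alpha-d)+2d=\beta$. Hence the displayed integral equals
$$
    C\,r^{\beta}\int_B\int_B\frac{G_B(\hat x,\hat y)\,G_B(\hat z,\hat w)}{G_B(\hat x,\hat w)}\,|\hat y-\hat z|^{\beta-\alpha-d}\,d\hat z\,d\hat y\;\le\;C\,C_1(d,\alpha,\beta)\,r^{\beta}
$$
by Lemma \ref{l:3G-estimate}, uniformly over $\hat x,\hat w\in B$, hence over $x,w\in B(x_0,r)$ and (by translation invariance) independently of $x_0$. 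Choosing $r_0=r_0(C,d,\alpha,\beta,\epsilon)>0$ with $C\,C_1(d,\alpha,\beta)\,r_0^{\beta}<\epsilon$ then gives \eqref{e:key-lemma-1} for all $r\le r_0$.

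I do not expect a real obstacle. The one step that needs to be carried out carefully is the Green-function scaling identity: it should be derived cleanly from translation invariance (which removes $x_0$) and from the joint scaling of the stable process and of the exit time of $B(0,r)$ (which produces the exponent $\alpha-d$). After that, everything is the bookkeeping of the powers of $r$ under the change of variables together with a single invocation of Lemma \ref{l:3G-estimate}.
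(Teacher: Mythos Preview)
Your proposal is correct and follows essentially the same route as the paper: bound $|F(y,z)|\le C|y-z|^{\beta}$, apply the scaling identity $G_{B(x_0,r)}(x,y)=r^{\alpha-d}G_B\big(r^{-1}(x-x_0),r^{-1}(y-x_0)\big)$, change variables to reduce to the unit ball, collect the factor $r^{\beta}$, and invoke Lemma~\ref{l:3G-estimate} to obtain $C\,C_1(d,\alpha,\beta)\,r^{\beta}<\epsilon$ for $r\le r_0$. Your additional justification of the scaling identity via the killed transition density is a nice touch but not strictly necessary---the paper simply states it.
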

\begin{proof}
    By the scaling property and the spatial homogeneity of a stable L\'evy process we see
    $$
        G_{B(x_0,r)}(x,y)=r^{\alpha-d} G_B(r^{-1}(x-x_0), r^{-1}(y-x_0)),
    $$
    where $B=B(0,1)$. Therefore,
    \begin{align*}
        &\int\limits_{B(x_0,r)}\int\limits_{B(x_0,r)}\frac{G_{B(x_0,r)}(x,y)G_{B(x_0,r)}(z,w)}{G_{B(x_0,r)}(x,w)}\, F(y,z)\,\frac{dz\,dy}{|y-z|^{\alpha+d}}\\
        &\le C r^{\alpha-d} \int\limits_{B(x_0,r)}\int\limits_{B(x_0,r)} \frac{G_B\big(\tfrac 1r(x-x_0), \tfrac 1r(y-x_0)\big) G_B\big( \tfrac 1r(z-x_0), \tfrac 1r(w-x_0)\big)}{G_B\big(\tfrac 1r(x-x_0), \tfrac 1r(w-x_0)\big)} \, \frac{|y-z|^{\beta}\,dz\,dy}{|y-z|^{d+\alpha}}.
    \end{align*}
    Using the change of variables $y'=r^{-1}(y-x_0)$, $z'=r^{-1}(z-x_0)$, where $x'=r^{-1}(x-x_0)$, $w'=r^{-1}(w-x_0)$ and by Lemma \ref{l:3G-estimate}, the last expression is equal to
    \begin{align*}
        &Cr^{\alpha-d} \int_B\int_B \frac{G_B(x',y')G_B(z',w')}{G_B(x',w')} r^{\beta}|y'-z'|^{\beta} r^{-d-\alpha} |y'-z'|^{-d-\alpha} r^{2d}\, dz'\, dy'\\
        &= Cr^\beta \int_B\int_B \frac{G_B(x',y')G_B(z',w')}{G_B(x',w')} |y'-z'|^{\beta-\alpha-d}\, dz'\, dy'\\
        &= C C_1(d,\alpha,\beta) r^\beta.
    \end{align*}
    Now we choose $r_0=r_0(C,d,\alpha, \beta, \epsilon) >0$ such that $C C_1(d,\alpha,\beta) r_0^\beta<\epsilon$.
\end{proof}

Let $F:\R^d\times \R^d\to\R$ be bounded and symmetric, set $A_t:=\sum_{s\le t}F(X_{s-}, X_s)$ and denote by $\tau_{B(x_0,r)}=\inf\{t>0:\, X_t\notin B(x_0,r)\}$ the first exit time of $X$ from the ball $B(x_0,r)$. For $x,w\in B(x_0,r)$, let $\Pp_x^w$ denote the law of the $h$-transformed killed process $X^{B(x_0,r)}$ with respect to the excessive function $G_{B(x_0,r)}(\cdot, w)$---this is the process $X^{B(x_0,r)}$ conditioned to die at $\{w\}$. By \cite[Proposition 3.3]{CS03} it holds that
\begin{align*}
    \E_x^w &\left[\sum_{s\le t}F(X_{s-}^{B(x_0,r)}, X_s^{B(x_0,r)})\right]\\
    &=\E_x\left[\int_0^t \int_{B(x_0,r)} \frac{F(X_s^{B(x_0,r)},z)G_{B(x_0,r)}(z,w)}{G_{B(x_0,r)}(x,w)}\, |X_s^{B(x_0,r)}-z|^{-d-\alpha}\, dz\, ds\right].
\end{align*}
This formula remains valid if we replace $t$ with $\tau_{B(x_0,r)}$. On $[0,\tau_{B(x_0,r)})$ the killed process coincides with $X$, and so
\begin{align}
    \E_x^w &\left[\sum_{s\le \tau_{B(x_0,r)}}F(X_{s-}, X_s)\right] \nonumber \\
    &= \E_x\left[\int_0^{\tau_{B(x_0,r)}} \int_{B(x_0,r)} \frac{F(X_s,z)G_{B(x_0,r)}(z,w)}{G_{B(x_0,r)}(x,w)}\, |X_s-z|^{-d-\alpha}\, dz\, ds\right] \nonumber\\
    &=  \int_{B(x_0,r)} \int_{B(x_0,r)} \frac{G_{B(x_0,r)}(x,y)G_{B(x_0,r)}(z,w)}{G_{B(x_0,r)}(x,w)}\, F(y,z) |y-z|^{-\alpha-d}\, dz\, dy.\label{e:levy-system-conditioned}
\end{align}

\begin{lemma}\label{l:estimate-for-killing}
    Assume that $F$ is a non-negative function in $I(C,\beta)$, let $\epsilon >0$ and denote by $r_0=r_0(C,d,\alpha,\beta,\epsilon)>0$ the constant from Lemma \ref{l:key-lemma}. Then for $r\le r_0$ it holds that
    $$
        e^{-\epsilon}\le \E_x^w \left[e^{-A_{\tau_{B(x_0,r)}}}\right] \le 1.
    $$
\end{lemma}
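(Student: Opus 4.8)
The plan is to prove the two inequalities separately, with the lower bound being the substantive one. The upper bound $\E_x^w\!\left[e^{-A_{\tau_{B(x_0,r)}}}\right]\le 1$ is immediate: since $F\ge 0$ we have $A_t=\sum_{s\le t}F(X_{s-},X_s)\ge 0$ for every $t$, so $e^{-A_{\tau_{B(x_0,r)}}}\le 1$ pointwise, and taking expectations under $\Pp_x^w$ gives the claim.

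For the lower bound I would apply Jensen's inequality for the convex function $t\mapsto e^{-t}$ under the probability measure $\Pp_x^w$, obtaining
$$
    \E_x^w\!\left[e^{-A_{\tau_{B(x_0,r)}}}\right]\ge \exp\!\left(-\E_x^w\!\left[A_{\tau_{B(x_0,r)}}\right]\right).
$$
It then suffices to show that $\E_x^w\!\left[A_{\tau_{B(x_0,r)}}\right]<\epsilon$ whenever $r\le r_0$ and $x,w\in B(x_0,r)$, since in that case the right-hand side is bounded below by $e^{-\epsilon}$. To bound this expectation I would use the exact identity \eqref{e:levy-system-conditioned}, which expresses $\E_x^w\!\left[\sum_{s\le \tau_{B(x_0,r)}}F(X_{s-},X_s)\right]$ as
\[
    \int_{B(x_0,r)}\int_{B(x_0,r)}\frac{G_{B(x_0,r)}(x,y)\,G_{B(x_0,r)}(z,w)}{G_{B(x_0,r)}(x,w)}\, F(y,z)\,|y-z|^{-\alpha-d}\,dz\,dy .
\]
Since $F$ is non-negative and lies in $I(C,\beta)$ with $\beta>\alpha$, Lemma \ref{l:key-lemma} applies (with $|F|=F$) and shows that this double integral is strictly smaller than $\epsilon$ for every $r\le r_0=r_0(C,d,\alpha,\beta,\epsilon)$, uniformly in $x,w\in B(x_0,r)$. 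Combining this with the Jensen estimate yields $\E_x^w\!\left[e^{-A_{\tau_{B(x_0,r)}}}\right]\ge e^{-\epsilon}$.

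The only point requiring a moment's care is the legitimacy of Jensen's inequality, which needs $\E_x^w\!\left[A_{\tau_{B(x_0,r)}}\right]<\infty$; but this finiteness is itself a consequence of Lemma \ref{l:key-lemma} via \eqref{e:levy-system-conditioned}. I do not expect any genuine obstacle: all the analytic work---the $3G$-type inequality, the scaling argument and the choice of $r_0$---has already been carried out in Lemmas \ref{l:3G-estimate} and \ref{l:key-lemma}, so the present statement reduces to a short application of Jensen's inequality together with the conditioned L\'evy-system formula.
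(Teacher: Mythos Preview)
Your proposal is correct and follows exactly the same route as the paper: use \eqref{e:levy-system-conditioned} together with Lemma~\ref{l:key-lemma} to get $\E_x^w[A_{\tau_{B(x_0,r)}}]<\epsilon$, then apply Jensen's inequality for $t\mapsto e^{-t}$ to obtain the lower bound, while the upper bound is trivial from $F\ge 0$. There is nothing to add.
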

\begin{proof}
    Let $r\le r_0$ and set $\tau=\tau_{B(x_0,r)}$. By \eqref{e:levy-system-conditioned} and Lemma \ref{l:key-lemma} we see that $\E_x^w[A_{\tau}]<\epsilon$. By Jensen's inequality, it follows that
    \begin{gather*}
        e^{-\epsilon}\le e^{-\E_x^w[A_{\tau}]}\le \E_x^w[e^{-A_{\tau}}] \leq 1.
    \qedhere
    \end{gather*}
\end{proof}

\begin{remark}
    If we do not assume in Lemma \ref{l:estimate-for-killing} that $F$ is non-negative, we could use Khas'minskii's lemma, see~e.g.~\cite[Lemma 3.7]{chung-zhao}, to get $\E_x^w \left[e^{A_{\tau_{B(x_0,r)}}}\right]\le (1-\epsilon)^{-1}$.
\end{remark}

Let $F$ be a non-negative, symmetric function on $\R^d \times\R^d$ and $D\subset \R^d$ a bounded open set. We say that a non-negative function $u:\R^d\to [0,\infty)$ is \emph{$F$-harmonic in $D$} if for every open set $V\subset \overline{V}\subset D$ the following mean-value property holds:
$$
    u(x)=\E_x\left[e^{-A_{\tau_V}} u(X_{\tau_V})\right],\quad \text{for all }x\in V.
$$
The function $u$ is \emph{regular $F$-harmonic in $D$}  if $u(x)=\E_x\left[e^{-A_{\tau_D}} u(X_{\tau_D})\right]$ for all $x\in D$. A standard argument using the strong Markov property shows that any regular $F$-harmonic function $u$ is also $F$-harmonic in $D$.

We will now prove the Harnack inequality for non-negative $F$-harmonic functions. For a continuous additive functional $A_t=\int_0^t f(X_s)\, ds$ with $f\in K(X)$ an analogous result has been proved in \cite[Theorem 4.1]{BB00}. Our argument is a modification of that proof. Recall that the Poisson kernel of the ball $B(0,r)$ is given by
$$
    P_r(x,z)
    = \widehat{c}(\alpha,d)  \left(\frac{r^2-|x|^2}{|z|^2-r^2}\right)^{\alpha/2}\, |x-z|^{-d},\quad |x|<r, |z|>r,
$$
where $\widehat{c}(\alpha,d)=\pi^{1+d/2}\Gamma(d/2) \sin(\pi \alpha/2)$.

\begin{thm}\label{t:hi}
    Let $D\subset \R^d$ be a bounded open set and $K\subset D$ a compact subset of $D$. There exists a constant $C_2=C_2(C,\beta, d,\alpha, D, K)>0$ such that for every $F\in I(C,\beta)$ and every $u:\R^d\to [0,\infty)$ which is $F$-harmonic in $D$, it holds that
    $$
        C_2^{-1}u(x) \le u(y) \le C_2 u(x),\quad x,y \in K.
    $$
\end{thm}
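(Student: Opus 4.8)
\section*{Proof plan for Theorem \ref{t:hi}}

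The plan is to adapt the proof of the Harnack inequality for Schr\"odinger semigroups in \cite[Theorem~4.1]{BB00}, replacing the continuous Kato-class perturbation there by the jump functional $A_t=\sum_{s\le t}F(X_{s-},X_s)$; throughout, $F\ge 0$ lies in $I(C,\beta)$ with $\beta>\alpha$, so that Lemmas~\ref{l:3G-estimate}--\ref{l:estimate-for-killing} apply. The first step is a reduction to a \emph{local} Harnack inequality by a chaining argument: it suffices to produce $r_0>0$ and $c_0\ge 1$, depending only on $C,\beta,d,\alpha$, so that for every $r\le r_0$, every $x_0\in\R^d$ and every non-negative $u$ which is $F$-harmonic in $B(x_0,2r)$ one has $u(x)\le c_0\,u(y)$ whenever $x,y\in B(x_0,r/2)$ (the reverse inequality then follows by symmetry). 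Covering $K$ by a finite chain of such half-balls that stay inside $D$ and iterating yields $C_2$, the dependence on $D$ and $K$ entering only through the length of the chain. I would take $r_0$ to be the radius produced by Lemma~\ref{l:key-lemma} for $\epsilon=1$, so that Lemma~\ref{l:estimate-for-killing} is valid on all balls of radius $\le r_0$.

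For the local estimate put $B_1=B(x_0,r)$ and $\tau=\tau_{B_1}$. Since $\overline{B_1}\subset B(x_0,2r)$, $F$-harmonicity gives $u(x)=\E_x\big[e^{-A_{\tau}}u(X_{\tau})\big]$ for $x\in B_1$, and splitting off the exit jump of $X$ by an Ikeda--Watanabe-type identity (of the same nature as the one behind \eqref{e:levy-system-conditioned}) rewrites this as
$$
    u(x)=\int_{B_1^{c}}P^{F}_{B_1}(x,z)\,u(z)\,dz,\qquad
    P^{F}_{B_1}(x,z)=\tilde c(d,\alpha)\int_{B_1}G^{F}_{B_1}(x,v)\,e^{-F(v,z)}\,|v-z|^{-d-\alpha}\,dv,
$$
where $G^{F}_{B_1}$ is the Green function of the subprocess of $X$ obtained by killing on $B_1^{c}$ and discounting by $e^{-A}$. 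Hence it is enough to prove $P^{F}_{B_1}(x,z)\asymp P^{F}_{B_1}(y,z)$, uniformly over $x,y\in B(x_0,r/2)$ and $z\in B_1^{c}$. I would get this from two ingredients. First, Lemma~\ref{l:estimate-for-killing} is the conditional gauge estimate: since $\E_x^w$ is the $G_{B_1}(\cdot,w)$-conditioned process, it amounts to $e^{-1}G_{B_1}(x,w)\le G^{F}_{B_1}(x,w)\le G_{B_1}(x,w)$ for all $x,w\in B_1$ once $r\le r_0$, so $G^{F}_{B_1}$ may be replaced by $G_{B_1}$ at the cost of a universal factor. Second, the stable Poisson kernel is regular: from the explicit formula for $P_r(x,z)$ one checks by elementary estimates on $(r^2-|x-x_0|^2)/(|z-x_0|^2-r^2)$ and on $|x-z|,|y-z|$ that $P_{B_1}(x,z)\asymp P_{B_1}(y,z)$ with a constant depending only on $d,\alpha$ whenever $x,y\in B(x_0,r/2)$, $z\in B_1^{c}$. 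Combining these, one is left to compare $\int_{B_1}G_{B_1}(x,v)\,e^{-F(v,z)}|v-z|^{-d-\alpha}\,dv$ for $x$ and $y$: for $z$ in the bounded annulus $\{r\le|z-x_0|<2r\}$ the weight $e^{-F(v,z)}$ is pinched between $e^{-C(3r)^{\beta}}$ and $1$, so the comparison follows from that of $P_{B_1}$; for $z$ farther out, $|v-z|^{-d-\alpha}\asymp|z-x_0|^{-d-\alpha}$ uniformly in $v\in B_1$, and, using the bound on $F$ coming from $F\in I(C,\beta)$ together with the explicit formula $\E_x[\tau_{B(x_0,r)}]=\kappa_{d,\alpha}(r^{2}-|x-x_0|^{2})^{\alpha/2}$, the two integrals are comparable as well.

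The main obstacle is the far-field range of the exit point $z$: there the non-locality of $X$ gives no control on $u$ beyond $u\ge0$, and one must still handle the jump weight $e^{-F}$ collected at the very exit. This is exactly what the conditional gauge estimate of Lemma~\ref{l:estimate-for-killing} (whose proof rested on the $3G$-type bound of Lemma~\ref{l:3G-estimate}) takes care of, by letting us pass from the discounted kernel $P^{F}_{B_1}$ to the ordinary Poisson kernel, whose regularity then closes the comparison. Everything in the second step is uniform in $r\le r_0$ and in $F\in I(C,\beta)$ — the scale-invariant form of the Poisson-kernel comparison and the uniform $r_0$ of Lemma~\ref{l:estimate-for-killing} playing the role of a scaling argument — so the local constant $c_0$ depends only on $C,\beta,d,\alpha$, and the chaining of the first paragraph then produces $C_2=C_2(C,\beta,d,\alpha,D,K)$.
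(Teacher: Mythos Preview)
Your plan is correct and rests on the same core idea as the paper: use the conditional gauge bound of Lemma~\ref{l:estimate-for-killing} to peel off the Feynman--Kac discount and reduce to the explicit Poisson-kernel comparison for the stable process. The packaging differs in two places. First, rather than your Ikeda--Watanabe expansion with the residual exit weight $e^{-F(v,z)}$ and the ensuing near/far-field case split, the paper invokes the conditional gauge factorization from \cite[(2.15)]{BB00} and \cite[Theorem~2.4]{CS03b} to get in one stroke
\[
\tfrac12\,\E_y[u(X_{\tau_B})]\le u(y)\le \E_y[u(X_{\tau_B})],\qquad B=B(x,r),\ r\le r_0,
\]
after which the elementary ratio bound $\sup_{|z-x|>r}P_r(0,z-x)/P_r(y-x,z-x)\le 3^{d+1}$ for $|y-x|<r/2$ finishes the local Harnack estimate immediately---your case analysis on $z$ is not needed. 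Second, for the passage from local to global the paper does not chain: it exploits non-locality directly by estimating, for $z\in K$ far from $x$, the probability that $X$ started in $\hat B=B(z,\rho_0/4)$ exits by jumping into $B(x,\rho_0/4)$, where the local estimate already controls $u$ by $u(x)$. Your chaining is of course also valid and yields the same dependence of $C_2$ on $D$ and $K$; the paper's one-jump argument is just shorter and makes the role of the stable Poisson kernel more transparent.
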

\begin{proof}
    Set $\delta_K=\mathrm{dist}(K, D^c)$ and $\rho_0=r_0\wedge \delta_K/2$ where $r_0=r_0(C,d,\alpha,\beta, 1/2)$ is the constant from Lemma \ref{l:key-lemma} with $\epsilon=1/2$. Let $x\in K$, $0<r\le \rho_0$ and $B=B(x,r)$. By \cite[(2.15)]{BB00},
    see also \cite[Theorem 2.4]{CS03b},
    $$
        u(y)
        =\E_y\left[u(X_{\tau_B})e^{-A_{\tau_B}}\right]
        =\E_y\left[u(X_{\tau_B})\E_y^{X_{\tau_B-}} \left[e^{-A_{\zeta}}\right]\right],
    $$
    where $\zeta=\tau_{B\setminus \{v\}}$ and $v=X_{\tau_B-}$. By Lemma \ref{l:estimate-for-killing}, $\frac12\le \E_x^{X_{\tau_B-}} \left[e^{-A_{\zeta}}\right] \le 1$, implying that
    \begin{equation}\label{e:hi-1}
        \frac12 \E_y[u(X_{\tau_B})]\le u(y)\le \E_y[u(X_{\tau_B})].
    \end{equation}
    If $|y-x|< \frac 12r$, then
    \begin{align*}
        \E_x [u(X_{\tau_B})] &= \int_{B^c}P_r(0,z-x) u(z)\, dz \\
        &\le  \sup_{|z-x|>r} \frac{P_r(0,z-x)}{P_r(y-x,z-x)}\, \int_{B^c}P_r(y-x,z-x) u(z)\, dz\\
        &\le 3^{d+1} \E_y [u(X_{\tau_B})].
    \end{align*}

    Here $P_r(\cdot, \cdot)$ denotes the Poisson kernel of the ball, and the last estimate follows from the explicit formula for $P_r$. Similarly,
    $$
        \E_x [u(X_{\tau_B})]\ge 3^{-d-1}\E_y [u(X_{\tau_B})].
    $$
    Combining the last two estimates with \eqref{e:hi-1} yields
    \begin{equation}\label{e:hi-2}
        3^{-d-3} u(y) \le u(x)\le 3^{d+3} u(y),\quad y\in B(x,r/2).
    \end{equation}
    In particular, \eqref{e:hi-2} holds for $r=\rho_0$.

    Now pick $z\in K$ such that $|z-x|\ge \frac 12\rho_0$ and set $\hat B:=B(z,\rho_0/4)$. With $B=B(x,\rho_0/4)$ we have that $B\cap \hat B=\emptyset$. By using \eqref{e:hi-1} in the first line, \eqref{e:hi-2} in the fourth line (with $r=\rho_0/2$), and the estimate of the Poisson kernel in the fifth line of the calculation below, we get
    \begin{align*}
        u(z)
        &\ge \frac12 \E_z[u(X_{\tau_{\hat B}})]\\
        &=   \frac12 \int_{\hat B^c} P_{\rho_0/4}(0,y-z)u(y)\, dy\\
        &\ge \frac12 \int_B P_{\rho_0/4}(0,y-z)u(y)\, dy \\
        &\ge  \frac12 3^{-d-3}u(x)\int_B P_{\rho_0/4}(0,y-z)\, dy \\
        &\ge  \frac12 3^{-d-3}u(x) \widehat{c}(\alpha,d)(\rho_0/4)^{\alpha} \int_{B}|y-z|^{-d-\alpha}\, dy\\
        &\ge  3^{-d-3}2^{-1-2\alpha}\widehat{c}(\alpha,d)\rho_0^{\alpha}\big|B\big(0,\rho_0/4\big)\big| \left(\frac{3|z-x|}{2}\right)^{-d-\alpha} u(x)\\
        &\ge  3^{-2d-3-\alpha}2^{-d-\alpha-1}\widehat{c}(\alpha,d)\big|B(0,1)\big| \rho_0^{d+\alpha} (\diam K)^{-d-\alpha} u(x)\\
        &= C(\alpha,d) \left(\frac{\rho_0}{\diam K}\right)^{d+\alpha}u(x) =c u(x).
    \end{align*}
    Similarly, $u(x)\ge c u(z)$. Together with \eqref{e:hi-2} this proves the theorem.
\end{proof}

\begin{remark}
    The constant $C_2$ depends on $d$, $\alpha$ and the ratio $\rho_0/\diam K$.
\end{remark}

\begin{lemma}\label{l:harmonic-semilocal}
    Let $D\subset \R^d$ be a bounded open set and assume that $F, \Phi$
    are two non-negative symmetric functions on $\R^d\times\R^d$ vanishing on the diagonal such that
    $$
        F(x,y)=\Phi(x,y)
        \quad\text{for all}\quad
        (x,y)\in (D\times\R^d)\cup (\R^d\times D).
    $$
    Then $u$ is (regular) $F$-harmonic in $D$ if, and only, if $u$ is (regular) $\Phi$-harmonic in $D$.
\end{lemma}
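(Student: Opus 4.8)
The plan is to show that the two additive functionals $A^F_t:=\sum_{s\le t}F(X_{s-},X_s)$ and $A^\Phi_t:=\sum_{s\le t}\Phi(X_{s-},X_s)$ agree up to and including the relevant exit time; once this is established, the mean-value identities defining $F$-harmonicity and $\Phi$-harmonicity are literally the same identity and the equivalence follows. The key elementary observation is that the hypothesis $F=\Phi$ on $(D\times\R^d)\cup(\R^d\times D)$ is equivalent to saying that $(F-\Phi)(x,y)\ne 0$ forces $x\notin D$ \emph{and} $y\notin D$, i.e.\ the function $F-\Phi$ is supported in $D^c\times D^c$.

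I would first do the (non-regular) case. Fix an open set $V$ with $\overline V\subset D$ and look at $\tau_V$. Every jump time $0<s<\tau_V$ has $X_s\in V\subset D$, so $(X_{s-},X_s)\in\R^d\times D$ and the corresponding summand of $A^F_{\tau_V}-A^\Phi_{\tau_V}$ vanishes; for $s=\tau_V$ the left limit satisfies $X_{\tau_V-}\in\overline V\subset D$ (and if $\tau_V$ is not a jump time, then $X_{\tau_V}=X_{\tau_V-}$ and the term is $0$ since $F,\Phi$ vanish on the diagonal), so this summand vanishes as well. Hence $A^F_{\tau_V}=A^\Phi_{\tau_V}$ $\Pp_x$-a.s.\ for every $x\in V$, and therefore $\E_x\big[e^{-A^F_{\tau_V}}u(X_{\tau_V})\big]=\E_x\big[e^{-A^\Phi_{\tau_V}}u(X_{\tau_V})\big]$ for all $x\in V$. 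Since $V$ was an arbitrary open set with $\overline V\subset D$, $u$ satisfies the $F$-mean-value property in $D$ precisely when it satisfies the $\Phi$-mean-value property in $D$.

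For the regular case one runs the same argument with $\tau_D$ in place of $\tau_V$. On $\{\tau_D=\infty\}$ every jump time $s$ has $X_s\in D$, so $A^F=A^\Phi$ identically; and for $s<\tau_D$ (with $\tau_D<\infty$) one still has $X_s\in D$, which gives $A^F_t=A^\Phi_t$ for all $t<\tau_D$ and hence $A^F_{\tau_D-}=A^\Phi_{\tau_D-}$. The only summand that is not automatically harmless is the jump that carries the process out of $D$ at time $\tau_D$: there $X_{\tau_D}\notin D$, so $(F-\Phi)(X_{\tau_D-},X_{\tau_D})$ can be nonzero only if the left endpoint $X_{\tau_D-}$ happens to lie on $\partial D$. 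Here I would invoke the standard fact that the isotropic $\alpha$-stable process leaves an open set by a jump from a point strictly inside it, so that $X_{\tau_D-}\in D$ holds $\Pp_x$-a.s.\ on $\{\tau_D<\infty\}$; then the remaining summand also agrees, $A^F_{\tau_D}=A^\Phi_{\tau_D}$ $\Pp_x$-a.s., and the regular $F$- and $\Phi$-mean-value identities coincide.

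The main obstacle is exactly this last point: controlling the position of the process immediately \emph{before} it leaves $D$. Some input about the path behaviour near $\partial D$ seems unavoidable — for instance, the alternative of exhausting $D$ by open sets $D_n\uparrow D$ with $\overline{D_n}\subset D$, applying the established equivalence to each $\tau_{D_n}$, and letting $n\to\infty$ again requires $X_{\tau_{D_n}}\to X_{\tau_D}$, which rests on the same non-continuous-exit property of the stable process. Apart from this, the proof is a straightforward bookkeeping of jump times and uses nothing beyond the definitions of $F$-harmonic and regular $F$-harmonic.
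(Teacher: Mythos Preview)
Your approach is exactly the paper's: show that the two additive functionals agree up to the exit time by checking each summand, then observe that the mean-value identities literally coincide. The paper compresses this into two lines --- ``for any open $V\subseteq D$ we have $X_{s-}\in V\subset D$, so $F(X_{s-},X_s)=\Phi(X_{s-},X_s)$ for all $s\le\tau_V$'' --- treating both the regular and non-regular cases at once by allowing $V=D$.

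You are in fact more careful than the paper. The paper's assertion $X_{s-}\in V$ should really read $X_{s-}\in\overline V$; this is harmless in the non-regular case (where $\overline V\subset D$) but is precisely the boundary issue you isolate when $V=D$. Your splitting into $s<\tau_V$ (use $X_s\in V\subset D$) and $s=\tau_V$ (use $X_{\tau_V-}\in\overline V$) is the right bookkeeping, and your observation that the regular case needs $X_{\tau_D-}\in D$ a.s.\ --- hence some input about how the stable process exits open sets --- is a point the paper's proof simply does not address. So: same method, and your writeup closes a gap the paper leaves implicit.
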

\begin{proof}
    Let $A_t=\sum_{s\le t}F(X_{s-},X_s)$ and $B_t=\sum_{s\le t}\Phi(X_{s-},X_s)$. For any open $V\subseteq D$ we have  $X_{s-}\in V\subset D$, and so $F(X_{s-},X_s)= \Phi(X_{s-},X_s)$ for all $s\leq\tau_V$. Thus, $A_{\tau_V}
        =  \sum_{s\le \tau_V} F(X_{s-},X_s)
        =  \sum_{s\le \tau_V} \Phi (X_{s-},X_s)
        =B_{\tau_V}$.
    Therefore
    $$
        \E_x\left[e^{-A_{\tau_V}}u(X_{\tau_V})\right]=\E_x\left[e^{-B_{\tau_V}}u(X_{\tau_V})\right],
    $$
    proving the claim.
\end{proof}

    For $R>0$ set $u_R(x):=u(Rx)$, $F_R(x,y):=F(Rx,Ry)$, $D_R:= R\cdot D := \{Rx:\, x\in D\}$, and define the additive functional $A^R_t:=\sum_{s\le t}F_R(X_{s-}, X_s)$.
\begin{lemma}\label{l:harmonic-scaling-b}
    Let $R>0$ and assume that $u$ is regular $F$-harmonic in $D_R$, i.e.
    \begin{equation}\label{e:harmonic-scaling-1}
        u(x)=\E_x\left[e^{-A_{\tau_{D_R}}}u(X_{\tau_{D_R}})\right]\quad \text{for all\ \ } x\in D_R.
    \end{equation}
    Then $u_R$ is regular $F_R$-harmonic in $D$, i.e.
    \begin{equation}\label{e:harmonic-scaling-2}
        u_R(x)=\E_x\left[e^{-A^R_{\tau_D}}u_R(X_{\tau_D})\right]\quad \text{for all\ \ } x\in D.
    \end{equation}
\end{lemma}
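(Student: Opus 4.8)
The plan is to reduce \eqref{e:harmonic-scaling-2} to \eqref{e:harmonic-scaling-1} by the scaling invariance of the isotropic $\alpha$-stable Lévy process. Fix $R>0$ and introduce the rescaled process $Y_t:=R^{-1}X_{R^{\alpha}t}$, $t\ge 0$. By the stable scaling property, under $\Pp_{Rx}$ the process $(Y_t)_{t\ge 0}$ is again an isotropic $\alpha$-stable Lévy process started at $x$; that is, the law of $(Y_t)_{t\ge 0}$ under $\Pp_{Rx}$ equals the law of $(X_t)_{t\ge 0}$ under $\Pp_x$. This identity in law is the only probabilistic input; everything else is a change of variables along the path.

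Next I would rewrite the three ingredients of the right-hand side of \eqref{e:harmonic-scaling-1} (evaluated at the point $Rx\in D_R$) in terms of $Y$. From $X_u=R\,Y_{u/R^{\alpha}}$ and $D_R=R\cdot D$ we get $X_u\notin D_R\iff Y_{u/R^{\alpha}}\notin D$, hence the first exit times are related by $\tau_{D_R}=R^{\alpha}\tau_D^{Y}$ (where $\tau_D^Y$ denotes the exit time of $Y$ from $D$), and $X_{\tau_{D_R}}=R\,Y_{\tau_D^{Y}}$; on $\{\tau_D^Y=\infty\}$ both sides are handled by the usual convention. For the additive functional, a jump of $X$ at time $s$ is exactly a jump of $Y$ at time $s/R^{\alpha}$, so reindexing the countable jump sum by $r=s/R^{\alpha}$ gives
$$
  A_{\tau_{D_R}}=\sum_{s\le \tau_{D_R}}F(X_{s-},X_s)=\sum_{r\le \tau_D^{Y}}F\big(R\,Y_{r-},\,R\,Y_r\big)=\sum_{r\le \tau_D^{Y}}F_R(Y_{r-},Y_r);
$$
note that no Jacobian factor appears because $A$ is a pure-jump functional and the time change $s\mapsto s/R^{\alpha}$ merely relabels jump times. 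Substituting these identities into \eqref{e:harmonic-scaling-1} at the point $Rx$ yields
$$
  u(Rx)=\E_{Rx}\Big[\exp\Big(-\textstyle\sum_{r\le \tau_D^{Y}}F_R(Y_{r-},Y_r)\Big)\,u\big(R\,Y_{\tau_D^{Y}}\big)\Big].
$$

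Finally I would apply the identity in law from the first step: since the law of $(Y_t)$ under $\Pp_{Rx}$ is that of $(X_t)$ under $\Pp_x$, the functional $Y\mapsto \exp(-\sum_{r\le\tau_D^Y}F_R(Y_{r-},Y_r))\,u(R\,Y_{\tau_D^Y})$ (a bounded measurable function of the path) has the same expectation whether computed for $Y$ under $\Pp_{Rx}$ or for $X$ under $\Pp_x$. Hence the last display equals $\E_x\big[e^{-A^R_{\tau_D}}u(R\,X_{\tau_D})\big]=\E_x\big[e^{-A^R_{\tau_D}}u_R(X_{\tau_D})\big]$, while the left-hand side is $u(Rx)=u_R(x)$. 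Since $x\in D$ was arbitrary and $Rx\in D_R$ so that the hypothesis \eqref{e:harmonic-scaling-1} applies, this is precisely \eqref{e:harmonic-scaling-2}. The only point requiring genuine care is the bookkeeping of the jump sum under the time change together with the (routine) measurability check needed to pass the scaling identity through the expectation; the conceptual content is entirely the stable scaling property.
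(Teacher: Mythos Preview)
Your proof is correct and follows essentially the same approach as the paper: both arguments rest on the stable scaling identity (the $\Pp_x$-law of $(RX_t)_{t\ge 0}$ equals the $\Pp_{Rx}$-law of $(X_{R^\alpha t})_{t\ge 0}$) together with the reindexing of the jump sum under the deterministic time change. The only cosmetic difference is that the paper computes from the right-hand side of \eqref{e:harmonic-scaling-2} towards $u_R(x)$, while you start from \eqref{e:harmonic-scaling-1} at $Rx$ and introduce the auxiliary process $Y_t=R^{-1}X_{R^\alpha t}$ explicitly; the content is identical.
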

\begin{proof}
    We begin with some scaling identities. Note that the $\Pp_x$-distribution of $(R X_t)_{t\ge 0}$ is equal to the $\Pp_{Rx}$-distribution of $(X_{R^{\alpha}t})_{t\ge 0}$. From this identity it follows that the $\Pp_x$-distribution of the pair $(\tau_D, R X_{\tau_D})$ is equal to the $\Pp_{Rx}$-distribution of  $(R^{-\alpha}\tau_{D_R},X_{\tau_{RD}})=(R^{-\alpha}\tau_{D_R},X_{\tau_{D_R}})$. Using these scaling identities in the second line, a change of variables in the third line and \eqref{e:harmonic-scaling-1} in the fourth line below, we get
    \begin{align*}
        \E_x\left[e^{-A^R_{\tau_D}}u_R(X_{\tau_D})\right]
        &=  \E_x\left[e^{-\sum_{s\leq \tau_D}F(R X_{s-}, R X_s)}u_R(X_{\tau_D})\right]\\
        &=  \E_{Rx}\left[e^{-\sum_{s\leq R^{-\alpha}\tau_{D_R}} F(X_{R^\alpha s-},X_{R^\alpha s})} u(X_{\tau_{D_R}})\right]\\
        &=  \E_{Rx} \left[e^{-\sum_{s\leq \tau_{D_R}}F(X_{s-},X_s)} u(X_{\tau_{D_R}})\right]\\
        &=  u(Rx)
        =  u_R(x).
    \qedhere
    \end{align*}
    \end{proof}

\begin{lemma}\label{l:fuchsian}
Assume that $F:\R^d\times\R^d \to [0,\infty)$ is symmetric, bounded and satisfies
\begin{equation}\label{e:fuchsian}
    F(x,y)\le C \frac{|x-y|^{\beta}}{1+|x|^{\beta}+|y|^{\beta}}
    \qquad\text{for all\ \ } x,y\in\R^d.
\end{equation}
\begin{enumerate}
\item\label{l:fuchsian-a}
    $F_R$ is symmetric, bounded and satisfies $F_R(x,y)\le C|x-y|^{\beta}$ for all $(x,y)\in (B(0,1)^c\times \R^d)\cup (\R^d\times B(0,1)^c)$.
\item\label{l:fuchsian-b}
    For a bounded open set $D\subset B(0,1)^c$ let
    $$
        \widehat{F}_R(x,y)
        =
        \begin{cases}
        F_R(x,y)  & \text{if\ \ } (x,y)\in (D\times \R^d)\cup (\R^d\times D)\\
        0         & \text{otherwise.}
        \end{cases}
    $$
        Then $\widehat{F}_R$ is symmetric, bounded and satisfies $\widehat{F}_R(x,y)\le C|x-y|^{\beta}$ for all $x,y\in\R^d$.
\end{enumerate}
\end{lemma}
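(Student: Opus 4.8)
The plan is to deduce both parts from the pointwise inequality in \ref{l:fuchsian-a}, since the symmetry and boundedness assertions are immediate: $F_R(x,y)=F(Rx,Ry)=F(Ry,Rx)=F_R(y,x)$ and $\|F_R\|_\infty=\|F\|_\infty<\infty$, and for $\widehat{F}_R$ one has $\|\widehat{F}_R\|_\infty\le\|F_R\|_\infty$ while the index set $(D\times\R^d)\cup(\R^d\times D)$ is invariant under interchanging the two coordinates, so $\widehat{F}_R$ inherits symmetry from $F_R$.

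For the estimate in \ref{l:fuchsian-a}, I would apply \eqref{e:fuchsian} at the points $Rx$ and $Ry$ and pull the factor $R^\beta$ out of the numerator and out of the two non-constant terms in the denominator:
\[
    F_R(x,y)=F(Rx,Ry)\le C\,\frac{R^\beta|x-y|^\beta}{1+R^\beta|x|^\beta+R^\beta|y|^\beta}.
\]
If $x\in B(0,1)^c$, that is $|x|\ge 1$, then $R^\beta|x|^\beta\ge R^\beta$, so the denominator is $\ge R^\beta$ and the fraction multiplying $C|x-y|^\beta$ is $\le 1$; hence $F_R(x,y)\le C|x-y|^\beta$. Since $F_R$ is symmetric, the same bound holds when $y\in B(0,1)^c$, and together these two cases exhaust $(B(0,1)^c\times\R^d)\cup(\R^d\times B(0,1)^c)$.

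For \ref{l:fuchsian-b} I would argue by cases on whether $(x,y)$ belongs to the index set. If $(x,y)\notin(D\times\R^d)\cup(\R^d\times D)$ then $\widehat{F}_R(x,y)=0\le C|x-y|^\beta$. Otherwise $x\in D$ or $y\in D$; since $D\subset B(0,1)^c$ this forces $x\in B(0,1)^c$ or $y\in B(0,1)^c$, so part \ref{l:fuchsian-a} applies and yields $\widehat{F}_R(x,y)=F_R(x,y)\le C|x-y|^\beta$.

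There is no real difficulty in this lemma; the one conceptual point is that scaling turns the Fuchsian denominator $1+|x|^\beta+|y|^\beta$ into $1+R^\beta|x|^\beta+R^\beta|y|^\beta$, and to recover a clean power bound $C|x-y|^\beta$ one needs this scaled weight to dominate $R^\beta$ uniformly in $R$. Staying in the region $\{|x|\ge 1\}$ (equivalently, working with an open set $D$ bounded away from the origin) guarantees exactly this, since then the single term $R^\beta|x|^\beta$ already beats $R^\beta$; this is why the statements are formulated on $B(0,1)^c$ and on $D\subset B(0,1)^c$ rather than on all of $\R^d$.
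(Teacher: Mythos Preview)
Your argument is correct and is essentially the same as the paper's: both apply \eqref{e:fuchsian} at $(Rx,Ry)$, compare the resulting denominator with $R^\beta$ under the assumption $|x|\ge 1$ or $|y|\ge 1$, and then deduce \ref{l:fuchsian-b} from \ref{l:fuchsian-a} by the obvious case split. The only cosmetic difference is that the paper divides numerator and denominator by $R^\beta$ (writing the bound as $C|x-y|^\beta/(R^{-\beta}+|x|^\beta+|y|^\beta)$) before invoking $|x|^\beta\ge 1$, whereas you keep the $R^\beta$ and bound the denominator below by $R^\beta|x|^\beta\ge R^\beta$; these are the same step.
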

\begin{proof}
\ref{l:fuchsian-b} follows directly from \ref{l:fuchsian-a}.
\ref{l:fuchsian-a} Symmetry and boundedness are clear. For $|x|\ge 1$ or $|y|\ge 1$ we have
\begin{gather*}
    F_R(x,y)
    = F(Rx, R y)
    \le C\frac{|Rx-Ry|^{\beta}}{1+|Rx|^{\beta}+|Ry|^{\beta}}
    =C \frac{|x-y|^{\beta}}{R^{-\beta}+|x|^{\beta}+|y|^{\beta}}
    \le C|x-y|^{\beta}.
\qedhere
\end{gather*}
\end{proof}

\begin{remark}
\begin{enumerate}
\item
    Note that \eqref{e:fuchsian} implies the condition
    \begin{equation}\label{e:fuchsian-2}
        F(x,y)\le C \frac{|x-y|^{\beta}}{1+|x|^{\beta}} \qquad \text{for all }x,y\in\R^d.
    \end{equation}
    Conversely, if $F$ is symmetric and \eqref{e:fuchsian-2} holds, then $F$ satisfies \eqref{e:fuchsian} with $2C$ instead of $C$. Indeed, by symmetry, \eqref{e:fuchsian-2} is valid with $y$ instead of $x$ in the denominator, and thus
    $$
        F(x,y)
        \le C |x-y|^{\beta}\min\left\{\frac{1}{1+|x|^{\beta}},\: \frac{1}{1+|y|^{\beta}}\right\}
        \le C |x-y|^{\beta}\frac{2}{1+|x|^{\beta}+|y|^{\beta}}.
    $$

\item
    Note that the statement of Lemma \ref{l:fuchsian} \ref{l:fuchsian-b} can be rephrased as $\widehat{F}_R\in I(C,\beta)$ for all $R>0$.
    This will be crucial in the proof of Theorem~\ref{t:finite-expectation}.
\end{enumerate}
\end{remark}

For a Borel set $C\subset \R^d$ let $T_C=\inf\{t>0:\, X_t\in C\}$ be its hitting time. If $0<a<b$, let $V(0,a,b):=\{x\in \R^d:\, a<|x|<b\}$ be the open annulus, and denote by $\overline{V}(0,a,b)$ its closure.
\begin{lemma}\label{l:infinite-hitting}
    Let $(R_n)_{n\ge 1}$ be a strictly increasing sequence of positive numbers such that $\lim_{n\to \infty}R_n=\infty$, and let $V_n:= \overline{V}(0,R_n,2R_n)=\{x\in \R^d\, :\, R_n \le |x|\le 2R_n\}$. Then
    $$
        \Pp_x\left(\limsup_{n\to \infty}\, \{T_{V_n}<\infty\}\right) =1
        \quad\text{for all\ \ } x\in\R^d.
    $$
\end{lemma}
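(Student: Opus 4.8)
Since $\alpha<d$ the process $X$ is transient, so $|X_t|\to\infty$ $\Pp_x$-a.s.; in particular $\sup_{s\le t}|X_s|<\infty$ for each $t$ while $\sup_{t\ge 0}|X_t|=\infty$, so the stopping times $\sigma_n:=\inf\{t:|X_t|\ge R_n\}=\tau_{B(0,R_n)}$ are $\Pp_x$-a.s.\ finite and increase to $\infty$. For each value only finitely many $\sigma_n$ coincide (otherwise $\sigma_n$ would be eventually constant, contradicting $\sigma_n\to\infty$), so we may list the distinct values of $\{\sigma_n:n\ge1\}$ as $s_1<s_2<\cdots$; these are finite and $s_j\uparrow\infty$. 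Put $\ell_j:=\max\{n:R_n\le|X_{s_j}|\}<\infty$; then $\sigma_n=s_j$ exactly for the $n$ in a block whose largest element is $\ell_j$, one has $s_{j+1}=\sigma_{\ell_j+1}$ (because $|X_t|<R_{\ell_j+1}$ for $t<s_j$ and $|X_{s_j}|<R_{\ell_j+1}$), and $\ell_{j+1}\ge\ell_j+1$, so $\ell_j\uparrow\infty$. Finally set $H_j:=\{|X_{s_j}|\le 2R_{\ell_j}\}\in\MM_{s_j}$. On $H_j$ we have $R_{\ell_j}\le|X_{s_j}|\le 2R_{\ell_j}$, i.e.\ $X_{s_j}\in V_{\ell_j}$, hence $T_{V_{\ell_j}}\le s_j<\infty$; since $\ell_j\uparrow\infty$, it therefore suffices to prove that $\Pp_x(\limsup_j H_j)=1$.

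The quantitative heart of the argument is a scale-invariant overshoot estimate: there is $c_0=c_0(d,\alpha)>0$ such that for all $r>0$ and all $y$ with $|y|<r$,
\[
    \Pp_y\big(|X_{\tau_{B(0,r)}}|\le 2r\big)\ge c_0 .
\]
By spatial homogeneity and stable scaling it suffices to treat $r=1$. For $|y|\le\tfrac12$ I would use the L\'evy system of $X$ (Ikeda--Watanabe formula): the probability that $X$ leaves $B(0,1)$ by a jump into the shell $\{1<|z|\le2\}$ equals $\int_{B(0,1)}G_{B(0,1)}(y,w)\int_{1<|z|\le2}\tilde c\,|w-z|^{-d-\alpha}\,dz\,dw$; since $|w-z|\le3$ there, the inner integral is bounded below by a positive constant, so the whole expression is $\ge c'(d,\alpha)\,\E_y[\tau_{B(0,1)}]\ge c'(d,\alpha)\,\E_y[\tau_{B(y,1/2)}]=c''(d,\alpha)>0$, using $B(y,\tfrac12)\subset B(0,1)$ together with translation and scaling. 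For $\tfrac12<|y|<1$ I would note that $\phi(y):=\Pp_y(|X_{\tau_{B(0,1)}}|\le2)$ is $\alpha$-harmonic, hence continuous and strictly positive, on $B(0,1)$, and that $\phi(y)\to1$ as $|y|\to1$: indeed $\{|X_{\tau_{B(0,1)}}|>2\}$ forces a jump of size $>1$ before the exit, so by the L\'evy system $1-\phi(y)\le\tilde c\big(\int_{|u|>1}|u|^{-d-\alpha}\,du\big)\E_y[\tau_{B(0,1)}]\le \mathrm{const}\cdot(1-|y|)^{\alpha}\to0$, using $\E_y[\tau_{B(0,1)}]\le\E_y[\tau_{B(y,1-|y|)}]=(1-|y|)^{\alpha}\E_0[\tau_{B(0,1)}]$. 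Thus $\phi$ extends continuously to $\overline{B}(0,1)$ with boundary value $1$ and is therefore bounded below by a positive constant on the compact shell $\{\tfrac12\le|y|\le1\}$; taking $c_0$ to be the minimum of the two constants proves the estimate. Note $\sigma_r=\tau_{B(0,r)}$, so the bound also controls $|X_{\sigma_r}|$.

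Granting the estimate, apply the strong Markov property at $s_j$, conditioning additionally on the $\MM_{s_j}$-measurable value of $\ell_j$ (which fixes the threshold $r:=R_{\ell_j+1}$ and guarantees $|X_{s_j}|<R_{\ell_j+1}=r$): since $s_{j+1}=\sigma_{\ell_j+1}$,
\[
    \Pp_x\big(|X_{s_{j+1}}|\le 2R_{\ell_j+1}\,\big|\,\MM_{s_j}\big)=\Pp_{X_{s_j}}\big(|X_{\sigma_r}|\le2r\big)\ge c_0 .
\]
As $|X_{s_{j+1}}|\ge R_{\ell_j+1}$ we have $\ell_{j+1}\ge\ell_j+1$, hence $2R_{\ell_{j+1}}\ge2R_{\ell_j+1}$, so $\{|X_{s_{j+1}}|\le2R_{\ell_j+1}\}\subseteq H_{j+1}$; thus $\Pp_x(H_{j+1}\mid\MM_{s_j})\ge c_0$ for all $j$, $\Pp_x$-a.s. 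Consequently $\sum_j\Pp_x(H_{j+1}\mid\MM_{s_j})=\infty$ a.s., and the conditional (L\'evy) form of the Borel--Cantelli lemma, applied to the filtration $(\MM_{s_j})_j$ and the events $H_j\in\MM_{s_j}$, gives $\Pp_x(\limsup_j H_j)=1$, hence $\Pp_x(\limsup_n\{T_{V_n}<\infty\})=1$ by the reduction above.

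The step requiring the most care is the passage from crossings of the sphere $\{|x|=R_n\}$ to a usable conditional lower bound: a single large jump of $X$ can carry it from inside $B(0,R_n)$ to far outside $B(0,R_{n+k})$, so the naive events $\{|X_{\sigma_n}|\le 2R_n\}$ are $\MM_{\sigma_n}$-measurable and may have conditional probability $0$ given the past---re-indexing by the distinct crossing times $s_j$ and the reached level $\ell_j$ is exactly what repairs this; the boundary behaviour $\phi(y)\to1$ in the overshoot estimate is a secondary technical point. As an alternative one can bypass the re-indexing via a zero--one law: $A:=\limsup_n\{T_{V_n}<\infty\}$ is, modulo $\Pp_x$-null sets, invariant under the shifts $\theta_t$ (for large $n$, $\{T_{V_n}<\infty\}$ and $\{T_{V_n}\circ\theta_t<\infty\}$ agree because $\sup_{s\le t}|X_s|<R_n$), so $x\mapsto\Pp_x(A)=\E_x[\Pp_{X_t}(A)]$ is a bounded $\alpha$-harmonic function on $\R^d$, hence constant, and (by the Markov property and martingale convergence) $\I_A$ is $\Pp_x$-a.s.\ constant, so $\Pp_x(A)\in\{0,1\}$; since $\Pp_x(T_{V_n}<\infty)\ge c_0$ for all large $n$ by the overshoot estimate, Fatou forces $\Pp_x(A)=1$.
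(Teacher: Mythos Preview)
Your argument is essentially correct and genuinely different from the paper's. The paper's proof is a three-liner: it invokes \cite[Proposition~2.5]{HN13} to get $\Pp_x(T_{C_k}<\infty)=1$ for every $k$, where $C_k=\bigcup_{n\ge k}V_n$, and then observes $\bigcap_k\{T_{C_k}<\infty\}\subset\limsup_n\{T_{V_n}<\infty\}$. Your approach is self-contained: a scale-invariant overshoot bound $\Pp_y(|X_{\tau_{B(0,r)}}|\le 2r)\ge c_0$, a re-indexing by the distinct first-passage times $s_j$ so that the next threshold $R_{\ell_j+1}$ is $\MM_{s_j}$-measurable, and then conditional Borel--Cantelli. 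The alternative you sketch at the end (invariance of the event, Liouville, Fatou) is closer in spirit to the paper's use of the tail structure but still avoids the external reference. What you gain is independence from \cite{HN13}; what you pay is length.

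One slip to fix: in the boundary estimate for $\phi(y)=\Pp_y(|X_{\tau_{B(0,1)}}|\le 2)$ you write $\E_y[\tau_{B(0,1)}]\le\E_y[\tau_{B(y,1-|y|)}]$, but $B(y,1-|y|)\subset B(0,1)$, so the inequality goes the other way. The conclusion you want, $\E_y[\tau_{B(0,1)}]\to 0$ as $|y|\to 1$, is still true---either cite the explicit formula $\E_y[\tau_{B(0,1)}]=c(d,\alpha)(1-|y|^2)^{\alpha/2}$, or estimate directly via the Poisson kernel: for $|z|>2$ one has $P_1(y,z)\le C(1-|y|^2)^{\alpha/2}|z|^{-d-\alpha}$, whence $1-\phi(y)\le C'(1-|y|^2)^{\alpha/2}\to 0$. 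With this repaired, your proof stands.
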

Lemma \ref{l:infinite-hitting} says that $\Pp_x\left(\{T_{V_n}<\infty\}\ \text{infinitely often}\right)=1$, i.e.~with $\Pp_x$ probability $1$, the process $X$ visits infinitely many of the sets $V_n$.
\begin{proof}[Proof of Lemma \ref{l:infinite-hitting}]
    Let $C_k:=\bigcup_{n\geq k} V_n$. By \cite[Proposition 2.5]{HN13}, $\Pp_x(T_{C_k}<\infty)=1$ for every $x\in \R^d$ and $k\geq 1$. Obviously,
    $$
        \big\{T_{C_k} < \infty\big\}
        = \big\{T_{\bigcup_{n\geq k} V_n} < \infty\big\}
        = \bigcup_{n\geq k}\big\{T_{V_n} < \infty\big\}.
    $$
    Since this inclusion holds for all $k\geq 1$, we get
    $$
        \bigcap_{k\geq 1}\big\{T_{C_k} < \infty\big\}
        \subset \bigcap_{k\geq 1}\bigcup_{n\geq k}\big\{T_{V_n} < \infty\big\}
        = \limsup_{n\to\infty}\big\{T_{V_n} < \infty\big\}.
    $$
    Since $\Pp_x(T_{C_k} < \infty)=1$ we see
    \begin{gather*}
        1 =  \Pp_x\bigg(\bigcap_{k\geq 1}\big\{T_{C_k} < \infty\big\}\bigg)
        \leq \Pp_x\left(\limsup_{n\to\infty}\big\{T_{V_n} < \infty\big\}\right)
        = \Pp_x\big(\{T_{V_n}<\infty\}\ \text{i.o.}\big).
    \qedhere
    \end{gather*}
\end{proof}

\begin{thm}\label{t:finite-expectation}
    Assume that $F:\R^d\times\R^d \to [0,\infty)$ is bounded, symmetric and satisfies condition \eqref{e:fuchsian} with  $\beta > \alpha $. Let $A_t:=\sum_{s\le t}F(X_{s-}, X_s)$. If $\Pp_x(A_{\infty}<\infty)=1$ for all $x\in \R^d$, then $\sup_{x\in \R^d} \E_x [A_{\infty}]<\infty$.
\end{thm}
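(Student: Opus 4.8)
The plan is to reduce the statement to the single assertion that $\inf_{x\in\R^d}u(x)>0$, where $u(x):=\E_x[e^{-A_\infty}]=\E_x[M_\infty]$ with $M_t=e^{-A_t}$: since $F$ is bounded, Proposition \ref{p:u-6}\ref{p:u-6-b} then immediately gives $\sup_x\E_x[A_\infty]<\infty$. Before attacking this I would collect the soft facts. Condition \eqref{e:fuchsian} implies $0\le F(x,y)\le C|x-y|^{\beta}$ (the denominator being $\ge 1$), so $F$ vanishes on the diagonal and $F\in I(C,\beta)$; since $\beta>\alpha$, Lemma \ref{l:kato-class} gives $F\in J(X)$ and $\lim_{t\to0}\sup_x\E_x[A_t]=0$. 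As the isotropic stable process is strong Feller, Proposition \ref{p:u-7}\ref{p:u-7-b} then shows that $u$ is continuous. By hypothesis $A_\infty<\infty$ a.s., hence $u>0$ and, by Lemma \ref{l:u-2}, $\lim_{t\to\infty}u(X_t)=1$ $\Pp_x$-a.s.\ for every $x$. Finally, Lemma \ref{l:u-4} (with $M_t=e^{-A_t}$) says exactly that $u$ is regular $F$-harmonic in every bounded open set.

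The core of the proof is a Harnack inequality for $u$ on dyadic annuli that is \emph{uniform across all scales}. Fix once and for all $D:=V(0,3/2,6)$ and the compact set $K:=\overline{V}(0,2,5)\subset D$, and note that $D\subset B(0,1)^c$. For an arbitrary $R>0$, $u$ is regular $F$-harmonic in the bounded open set $R\cdot D$, so by Lemma \ref{l:harmonic-scaling-b} the rescaled function $u_R(x)=u(Rx)$ is regular $F_R$-harmonic in $D$. By Lemma \ref{l:fuchsian}\ref{l:fuchsian-b} the truncation $\widehat F_R$ associated with this $D$ belongs to $I(C,\beta)$ and coincides with $F_R$ on $(D\times\R^d)\cup(\R^d\times D)$, so Lemma \ref{l:harmonic-semilocal} shows that $u_R$ is also $\widehat F_R$-harmonic in $D$. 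Applying Theorem \ref{t:hi} to $\widehat F_R\in I(C,\beta)$ on the pair $K\subset D$ produces a constant $C_2=C_2(C,\beta,d,\alpha)\ge 1$, \emph{independent of $R$}, with $C_2^{-1}u_R(x)\le u_R(y)\le C_2 u_R(x)$ for all $x,y\in K$. Undoing the dilation, this reads $\sup_{\overline{V}(0,2R,5R)}u\le C_2\,\inf_{\overline{V}(0,2R,5R)}u$ for every $R>0$.

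Now suppose, for contradiction, that $\inf_x u(x)=0$. Since $u$ is continuous and strictly positive it is bounded away from $0$ on every compact set, so necessarily $\liminf_{|x|\to\infty}u(x)=0$; choose $x_n$ with $|x_n|\to\infty$ and $u(x_n)\to0$, and set $R_n:=|x_n|/3$, so that $x_n\in\overline{V}(0,2R_n,5R_n)$. The scale-uniform Harnack inequality gives $\sup_{\overline{V}(0,2R_n,4R_n)}u\le\sup_{\overline{V}(0,2R_n,5R_n)}u\le C_2 u(x_n)=:\epsilon_n\to0$. Passing to a subsequence we may assume that $(R_n)$ is strictly increasing with $R_n\to\infty$; applying Lemma \ref{l:infinite-hitting} with the sequence of radii $2R_n$ yields $\Pp_x$-a.s.\ that $X$ hits $V_n:=\overline{V}(0,2R_n,4R_n)$ for infinitely many $n$. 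On that event $u(X_{T_{V_n}})\le\epsilon_n$ for infinitely many $n$, while $T_{V_n}\to\infty$ along that subsequence (the càdlàg paths are locally bounded, yet $|X_{T_{V_n}}|\ge 2R_n\to\infty$); hence $\liminf_{t\to\infty}u(X_t)=0$ $\Pp_x$-a.s., contradicting Lemma \ref{l:u-2}. Therefore $\inf_x u(x)>0$, and Proposition \ref{p:u-6}\ref{p:u-6-b} gives $\sup_{x\in\R^d}\E_x[A_\infty]<\infty$.

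The one genuinely delicate step is the uniformity in $R$ of the Harnack constant: everything hinges on the Fuchsian decay \eqref{e:fuchsian} being precisely the condition which forces the dilated kernels $F_R$ to lie in a \emph{fixed} class $I(C,\beta)$ away from the origin, so that the constant coming out of Theorem \ref{t:hi} does not deteriorate as the annuli march off to infinity. The remaining ingredients --- the martingale limit $u(X_t)\to1$, the recurrence-type statement of Lemma \ref{l:infinite-hitting}, and the reduction via Proposition \ref{p:u-6} --- are, by comparison, routine.
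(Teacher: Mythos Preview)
Your proof is correct and follows essentially the same route as the paper's: reduce to $\inf_x u(x)>0$ via Proposition~\ref{p:u-6}\ref{p:u-6-b}, obtain a scale-uniform Harnack inequality on annuli by combining Lemmas~\ref{l:harmonic-scaling-b}, \ref{l:harmonic-semilocal}, \ref{l:fuchsian} and Theorem~\ref{t:hi}, and derive a contradiction with $\lim_{t\to\infty}u(X_t)=1$ using Lemma~\ref{l:infinite-hitting}. The only differences are cosmetic --- your choice of $D=V(0,3/2,6)$, $K=\overline{V}(0,2,5)$ versus the paper's $D=V(0,1,5)$, $K=\overline{V}(0,2,4)$, and your direct use of $R_n=|x_n|/3$ rather than the paper's passage through dyadic annuli $V_k=\overline{V}(0,2^k,2^{k+1})$ --- and you are slightly more explicit in arguing that $T_{V_n}\to\infty$ along the relevant subsequence.
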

\begin{proof}
    First note that $F(x,y)\le C|x-y|^{\beta}$ for all $x,y\in \R^d$, hence $F\in I(C,\beta)$ and, by Lemma \ref{l:kato-class}, $F\in J(X)$. Let $M_t=e^{-A_t}$ and $u(x):=\E_x [M_{\infty}]$. Since $X$ is a strong Feller process, it follows from  Proposition \ref{p:u-7}\ref{p:u-7-b} and Lemma \ref{l:kato-class} that $u$ is continuous. Moreover, by Lemma \ref{l:u-4}, $u$ is regular $F$-harmonic in every bounded open set $G\subset \R^d$. Finally, by Lemma \ref{l:u-2}, we have that $\lim_{t\to \infty}u(X_t)=1$ $\Pp_x$ a.s. According to Proposition \ref{p:u-6}\ref{p:u-6-b}, in order to prove that $\sup_{x\in \R^d}\E_x [A_{\infty}]<\infty$ it suffices to show that $u$ is bounded from below by a strictly positive constant.

    Let $D=V(0,1,5)=\{x\in \R^d:\, 1<|x|<5\}$. For $R>1$ set $D_R:=R\cdot D$, $u_R(x)=u(Rx)$, $F_R(x,y)=F(Rx,Ry)$ and $\widehat{F}_R(x,y)=F_R(x,y)$ for $(x,y)\in (D\times \R^d)\cup (\R^d\times D)$, and $0$ otherwise. Since $u$ is regular $F$-harmonic
    in $D_R$, we get from Lemma \ref{l:harmonic-scaling-b} that $u_R$ is regular $F_R$-harmonic in $D$. Since $F_R(x,y)=\widehat{F}_R(x,y)$ for all $(x,y)\in (D\times \R^d)\cup (\R^d \times D)$, $u_R$ is by Lemma \ref{l:harmonic-semilocal} also regular $\widehat{F}_R$-harmonic in $D$. Moreover, by Lemma \ref{l:fuchsian}, $\widehat{F}_R\in I(C,\beta)$. Hence it follows from Theorem \ref{t:hi} that there exists a constant $c>1$ depending only on $d$ and $\alpha$ such that
    $$
        c^{-1}u_R(y)\le u_R(x) \le c u_R(y)\quad \text{for all\ \ }x,y\in \overline{V}(0,2,4).
    $$
    This can be written as
    \begin{equation}\label{e:hi-scaled}
        c^{-1}u(y)\le u(x)\le c u(y) \quad \text{for all\ \ }x,y\in \overline{V}(0,2R,4R).
    \end{equation}
    Since $u$ is continuous, lower boundedness follows if we can show that $\liminf_{|x|\to\infty}u(x)>0$. Suppose not; then there exists a sequence $(x_n)_{n\ge 1}$ in $\R^d$ such that $|x_n|\to \infty$ and $\lim_{n\to \infty} u(x_n)=0$. Let $V_k=\overline{V}(0,2^k,2^{k+1})$. Without loss of generality we may assume that there exists an increasing sequence $(k_n)_{n\geq 1}$ such that $x_n\in V_{k_n}$ for every $n\ge 1$. By Lemma \ref{l:infinite-hitting}, with $\Pp_x$-probability 1, $X$ hits infinitely many sets $V_{k_n}$. Hence, for $\Pp_x$ a.e.~$\omega$ there exists a subsequence $(n_l(\omega))$  and a sequence of times $t_l(\omega)$ such that $X_{t_l(\omega)}(\omega)\in V_{k_{n_l(\omega)}}$. By \ref{e:hi-scaled} we get that
    $$
        c^{-1}u(X_{t_l(\omega)}(\omega))\le u(x_{n_l(\omega)}) \le c u(X_{t_l(\omega)}(\omega)).
    $$
    Since, by assumption, $\lim_{l\to \infty}u(x_{n_l(\omega)})=0$ we get that $\lim_{l\to \infty}u(X_{t_l(\omega)}(\omega))=0$. But this is a contradiction with $\lim_{t\to \infty}u(X_t)=1$ $\Pp_x$ a.s.

    We conclude that $\liminf_{n\to \infty} u(x_n)\ge c^{-1}$. This finishes the proof.
\end{proof}

\begin{remark}\label{r:infinite-expectation}
    For $\alpha <\beta$ let
    $$
        F(x,y):=\frac{|x-y|^{\beta}}{1+|x|^{\beta}+|y|^{\beta}},\quad x,y\in \R^d.
    $$
    Then $F$ is non-negative, bounded, symmetric, satisfies \eqref{e:fuchsian} and $F\in  J(X)\subset I_2(X) $, cf.\ the remark following Definition \ref{d:iCb}. Let $A_t:=\sum_{s\le t}F(X_{s-},X_s)$. Then $\E_x [A_t]<\infty$ for all $t\ge 0$. On the other hand, it is not difficult to show that $\E_x [A_{\infty}]=\infty$. Hence, the statement of Theorem \ref{t:finite-expectation} is not void---there do exist functions satisfying all conditions of the theorem (except $A_{\infty}<\infty$ a.s.) but still $\E_x [A_{\infty}] =\infty$.
\end{remark}

\begin{remark}\label{r:finite-expectation}
    Suppose that $f:\R^d\to [0,\infty)$ is a measurable function satisfying the condition
    \begin{equation}\label{e:fuchsian-cont}
        f(x)\le \frac{C}{1+|x|^{\alpha}}\quad \text{for all\ \ }x\in \R^d.
    \end{equation}
    Let $A_t:=\int_0^t f(X_s)\, ds$. Using similar arguments as in the proof of Theorem \ref{t:finite-expectation} we can show that $\Pp_x(A_{\infty}<\infty)=1$ implies that $\E_x [A_{\infty}]<\infty$. Indeed, the analogues of Lemmas \ref{l:key-lemma} and \ref{l:estimate-for-killing} and Theorem \ref{t:hi} are given in \cite[Lemma 3.4, Lemma 3.5, Theorem 4.1]{BB00}, the scaling Lemma \ref{l:harmonic-scaling-b} is proved in the same way, the counterpart of Lemma \ref{l:fuchsian} shows that $\widehat{f}_R(x):=R^{\alpha}f(Rx)$  is bounded by $C$ for all $|x|\ge 1$. The rest of the argument is exactly the same as in the proof of Theorem \ref{t:finite-expectation}.
\end{remark}

    In the next result we show that the condition \eqref{e:fuchsian} is essential for validity of the Theorem \ref{t:finite-expectation}.
\begin{thm}\label{t:infinite-expectation}
    There exists a non-negative bounded function $F\in I(1,\beta)$ such that $A_{\infty}:=\sum_{s>0} F(X_{s-}, X_s)<\infty$ $\Pp_x$ a.s., but $\E_x [A_{\infty}]=\infty$.
\end{thm}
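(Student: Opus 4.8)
The plan is to support $F$ on a sequence of pairwise disjoint balls $B_n$ running off to infinity, chosen so that $X$ enters $B_n$ only with a \emph{summable} probability, but so that \emph{whenever} it does enter, it is forced to make a contribution to $A_\infty$ whose typical size grows with $n$. Borel--Cantelli then gives $A_\infty<\infty$ $\Pp_x$-a.s., while the growth of these forced contributions makes $\E_x[A_\infty]=\sum_n\E_x[A^{(n)}_\infty]$ diverge. Concretely, I would fix a large constant $R>1$ (depending only on $d,\alpha$) and a unit vector $\mathbf e\in\R^d$, put $r_n:=R^n$, $z_n:=R^{dn/(d-\alpha)}\mathbf e$, $B_n:=B(z_n,r_n)$, and set
\[
 F(x,y):=\sum_{n\ge1}\mathbf 1_{B_n}(x)\,\mathbf 1_{B_n}(y)\,\mathbf 1_{\{|x-y|\ge1\}}.
\]
Since $d/(d-\alpha)>1$, for $R$ large the $B_n$ are pairwise disjoint with $r_n=o(|z_n|)$, and one computes $|z_n|^{\alpha-d}r_n^{d}=1$ and $(r_n/|z_n|)^{d-\alpha}=R^{-n\alpha}$. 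The function $F$ is non-negative, symmetric, $\{0,1\}$-valued, vanishes on the diagonal, and $F(x,y)\le|x-y|^{\beta}$ (indeed for every $\beta>0$) because $|x-y|\ge1$ on its support, so $F\in I(1,\beta)$. Since $F\le\mathbf 1_{\{|x-y|\ge1\}}$ we get $\int F(z,y)j(z,y)\,dy\le\tilde c(d,\alpha)\int_{|y|\ge1}|y|^{-d-\alpha}\,dy<\infty$ uniformly in $z$, hence $F\in J(X)\subset I_2(X)$ and $A_t=\sum_{s\le t}F(X_{s-},X_s)$ is a legitimate additive functional with $\E_x[A_t]<\infty$ for $t<\infty$. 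By disjointness of the $B_n$, $A_\infty=\sum_n A^{(n)}_\infty$ with $A^{(n)}_t:=\sum_{s\le t}\mathbf 1_{B_n}(X_{s-})\,\mathbf 1_{B_n}(X_s)\,\mathbf 1_{\{|X_s-X_{s-}|\ge1\}}$.

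To prove $\E_x[A_\infty]=\infty$ I would use the L\'evy system formula $\E_x[A^{(n)}_\infty]=Gh_n(x)$, where $h_n(z)=\mathbf 1_{B_n}(z)\,\tilde c(d,\alpha)\int_{B_n\cap\{|z-y|\ge1\}}|z-y|^{-d-\alpha}\,dy$. For $z\in B(z_n,r_n/2)$ one has $\{y:1\le|z-y|\le r_n/2\}\subset B_n$, so once $r_n\ge4$, $h_n(z)\ge\tilde c(d,\alpha)\int_{1\le|w|\le2}|w|^{-d-\alpha}\,dw=:c_0>0$ on $B(z_n,r_n/2)$. Since $|x-z|\le2|z_n|$ for $z\in B_n$ and $n$ large, this yields
\[
 \E_x[A^{(n)}_\infty]=c(d,\alpha)\int|x-z|^{\alpha-d}h_n(z)\,dz\ \ge\ c(d,\alpha)\,c_0\,(2|z_n|)^{\alpha-d}\,\bigl|B(z_n,r_n/2)\bigr|\ =\ c_1\,|z_n|^{\alpha-d}r_n^{d}\ =\ c_1,
\]
so by Tonelli $\E_x[A_\infty]=\sum_{n}\E_x[A^{(n)}_\infty]=+\infty$.

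For $A_\infty<\infty$ $\Pp_x$-a.s.\ I would note that a jump contributing to $A^{(n)}$ has its left endpoint in $B_n$, so $\{A^{(n)}_\infty>0\}\subset\{T_{B_n}<\infty\}$; the standard capacitary estimate for the isotropic $\alpha$-stable process ($\mathrm{Cap}(B(z,r))\asymp r^{d-\alpha}$, together with $G(x,y)\le c\,|x-z|^{\alpha-d}$ for $y\in B(z,r)$ and $|x-z|\ge2r$) gives $\Pp_x(T_{B_n}<\infty)\le c_2\,(r_n/|z_n|)^{d-\alpha}=c_2R^{-n\alpha}$, which is summable. Borel--Cantelli then yields that $\Pp_x$-a.s.\ $A^{(n)}_\infty=0$ for all but finitely many $n$; since for each fixed $n$ the function $h_n$ is bounded with bounded support, $\E_x[A^{(n)}_\infty]=Gh_n(x)<\infty$, hence $A^{(n)}_\infty<\infty$ $\Pp_x$-a.s., and therefore $A_\infty=\sum_n A^{(n)}_\infty<\infty$ $\Pp_x$-a.s.\ for every $x$.

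The delicate point---and the main obstacle---is the tension between the two requirements. If $F$ were built so that each $A^{(n)}_\infty$ were \emph{bounded} while $\Pp_x(A^{(n)}_\infty>0)$ were summable, then $\E_x[A^{(n)}_\infty]\le\|A^{(n)}_\infty\|_\infty\,\Pp_x(A^{(n)}_\infty>0)$ would force $\E_x[A_\infty]<\infty$. The construction escapes this because, on the rare event $\{T_{B_n}<\infty\}$, the process spends time of order $r_n^{\alpha}$ inside the \emph{large} ball $B_n$ and makes of order $r_n^{\alpha}\to\infty$ jumps of size $\ge1$ with both endpoints in $B_n$, so $A^{(n)}_\infty$ is there of order $r_n^{\alpha}$---unbounded in $n$---whereas $\Pp_x(T_{B_n}<\infty)\asymp(r_n/|z_n|)^{d-\alpha}$ remains summable. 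Balancing ``$\E_x[A^{(n)}_\infty]$ bounded below'' against ``$\Pp_x(T_{B_n}<\infty)$ summable'' is exactly what forces the scaling $|z_n|\asymp r_n^{d/(d-\alpha)}$; I expect the only genuinely non-elementary ingredient to be the capacitary upper bound on the hitting probability of $B_n$.
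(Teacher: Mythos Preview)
Your proof is correct and follows the same overall strategy as the paper: place $F$ on a sequence of disjoint balls running off to infinity, use the capacitary estimate $\Pp_x(T_{B_n}<\infty)\lesssim(r_n/|z_n|)^{d-\alpha}$ together with Borel--Cantelli for the a.s.\ finiteness, and the L\'evy-system identity $\E_x[A^{(n)}_\infty]=Gh_n(x)$ together with the scaling $|z_n|^{\alpha-d}r_n^d\asymp 1$ for the divergence of the expectation.

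The difference lies only in the choice of $F$. You take the indicator $\mathbf 1_{B_n}(x)\mathbf 1_{B_n}(y)\mathbf 1_{\{|x-y|\ge 1\}}$, supported on \emph{large} jumps; the paper instead uses $F(y,z)=|y-z|^{\beta}/(|y|^{\gamma}+|z|^{\gamma})$ on $\{y,z\in B(x_n,r_n),\ |y-z|\le 1\}$ (small jumps, with an extra parameter $0<\gamma<\alpha$), together with $|x_n|=2^{nd/(\alpha-\gamma)}$, $r_n=2^{-n}|x_n|+1$. Your version is cleaner and works for every $\beta>0$ simultaneously, and the balancing you identify, $|z_n|\asymp r_n^{d/(d-\alpha)}$, makes the mechanism transparent. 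The paper's more elaborate form is not needed for Theorem~\ref{t:infinite-expectation} as stated, but is chosen with an eye to Theorem~\ref{t:3}: there one needs an $F$ with $F\le\tfrac12\,|x-y|^{\beta}/(1+|x|^{\gamma}+|y|^{\gamma})$ for prescribed $\gamma<\alpha/2<\beta$, and the paper simply sets $F=\tfrac18\sqrt{\Phi}$ with $\Phi$ the function built here (with exponents $2\beta,2\gamma$). Your indicator $F$ would not produce that bound, so while it proves the present theorem more economically, it would require a separate construction for Theorem~\ref{t:3}.
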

\begin{proof}
    Fix $\gamma$ and $\beta$ so that $0<\gamma <\alpha <\beta$ and $\alpha- \gamma< \frac 12$.  Let $(x_n)_{n\ge 1}$ be a sequence of
    points in $\R^d$ such that $|x_n|=2^{nd/(\alpha-\gamma)}$ and let $r_n=2^{-n}|x_n|+1$. Note that $|x_n|\ge 2^{2nd}>2^n$. Consider the family of balls $\{B(x_n,r_n)\}_{n\ge 1}$. By \cite[Lemma 2.5]{MV},
    $$
        \Pp_0(T_{B(x_n,r_n)}<\infty)
        \le \left(\frac{r_n}{|x_n|}\right)^{d-\alpha}
        =\left(\frac{2^{-n}|x_n|+1}{|x_n|}\right)^{d-\alpha}
        \le (2^{-n}+2^{-n})^{d-\alpha}=2^{(1-n)(d-\alpha)}.
    $$
    Hence,
    $\sum_{n\ge 1}\Pp_0(T_{B(x_n,r_n)}<\infty) <\infty$, implying by the Borel--Cantelli lemma that $\Pp_0(\{T_{B(x_n,r_n)}<\infty\} \text{\ i.o.})=0$. Therefore, $X$ hits $\Pp_0$ a.s.\ only finitely many balls $B(x_n,r_n)$. Let $C:=\bigcup_{n\ge 1}B(x_n,r_n)$.

    Define a symmetric bounded function $F:\R^d\times \R^d \to [0,\infty)$ by
    $$
        F(y,z)
        :=\begin{cases}
            \dfrac{|y-z|^{\beta}}{|y|^{\gamma}+|z|^{\gamma}},& y,z\in B(x_n,r_n) \text{\ for some\ } n, \;  |y-z|\le 1\\[\bigskipamount]
            0, & \text{otherwise}.
        \end{cases}
    $$
    Note that $F(y,z)\le |y-z|^{\beta}\wedge 1$ for all $y,z\in \R^d$. Thus, $F\in J(X)$ and $F\in I(1,\beta)$.

    Let $A_t:=\sum_{s\le t}F(X_{s-},X_s)$, $t\ge 0$. Then $\E_0 [A_t]<\infty$ implying that $\Pp_0(A_t<\infty)=1$ for all $t>0$.  Since $X$ visits only finitely many balls $B(x_n, r_n)$, the last exit time from the union $\bigcup_{n\ge 1}B(x_n,r_n)$ is finite, hence $\Pp_0(A_{\infty}<\infty)=1$. Since $\{A_{\infty}<\infty\}\in \II$, the argument at the end of Section \ref{sec-2} shows that $\Pp_x(A_{\infty}<\infty)=1$ for all $x\in \R^d$.

    Further,
    \begin{align*}
    \E_x [A_{\infty}]
    &=\E_x \left[\sum_{s>0}F(X_{s-},X_s)\right]\\
    &=\E_x \left[\int_0^{\infty}\!\!\! \int_{\R^d} F(X_{s-},z)j(X_{s-},z)\, dz \, ds\right]\\
    &=\E_x \left[\int_0^{\infty} h(X_s)\, ds\right]
    =Gh(x)
    =c(\alpha,d) \int_{\R^d}h(y)|x-y|^{\alpha-d}\,dy,
    \end{align*}
    where
    $$
        h(y)
        :=\int_{\R^d}F(y,z)j(y,z)\, dz
        =\tilde{c}(\alpha,d)\int_{\R^d}F(y,z) |y-z|^{-d-\alpha}\, dz.
    $$
    If $y\notin C$, then $F(y,\cdot)=0$, implying that $h(y)=0$. Let $y\in B(x_n,r_n-1)$. Then $|y|\le 2|x_n|$ and if $z$ satisfies $|z-y|<1$, then $z\in B(x_n,r_n)$ and also $|z|\le 2|x_n|$. Therefore,
\begin{align*}
    h(y)
    &=  \tilde{c}(\alpha,d)\int_{z\in B(x_n,r_n), |z-y|\le 1} \frac{|y-z|^{\beta}}{|y|^{\gamma}+|z|^{\gamma}}\, |y-z|^{-d-\alpha}\, dz\\
    &\ge  c_1 \int_{|z-y|\le 1} \frac{|y-z|^{\beta-d-\alpha}}{|x_n|^{\gamma}}\, dz \ge c_2 |x_n|^{-\gamma}.
\end{align*}
    In the last inequality we have used that $0 < \int_{|z-y|\le 1}|y-z|^{\beta-d-\alpha}\, dz <\infty$. Hence, for $|x|\le 1$ we have $|x-y|\leq 4|x_n|$, so
\begin{align*}
    Gh(x)
    &= c(\alpha,d) \sum_{n\ge 1}\int_{B(x_n,r_n)}  h(y)|x-y|^{\alpha-d}\, dy\\
    &\ge   c(\alpha,d) \sum_{n\ge 1}\int_{B(x_n,r_n-1)}  h(y)|x-y|^{\alpha-d}\, dy\\
    &\ge  c_3 \sum_{n\ge 1} |x_n|^{-\gamma} \int_{B(x_n,r_n-1)}|x_n|^{\alpha-d}\, dy\\
    &= c_4 \sum_{n\ge 1} |x_n|^{-\gamma+\alpha-d} (r_n-1)^d\\
    &= c_4 \sum_{n\ge 1} |x_n|^{-\gamma+\alpha-d} (2^{-n}|x_n|)^d\\
    &= c_4 \sum_{n\ge 1} 2^{nd}2^{-nd} = \infty.
\end{align*}
This implies that $Gh\equiv \infty$.
\end{proof}

\section{Proofs of Theorems~\ref{t:2} and \ref{t:3}}
Throughout this section we assume that $X$ is an isotropic $\alpha$-stable L\'evy process such that $\alpha < 2\wedge d$.  We collect the results obtained so far and prove Theorems~\ref{t:2}  and  \ref{t:3} stated in the introduction.


Since both $X$ and $\widetilde{X}$ admit continuous transition densities, it follows from Corollary \ref{c:one-implies-all} that if $\widetilde{\Pp}_x\ll \Pp_x$ ($\widetilde{\Pp}_x\perp \Pp_x$) for \emph{some} $x\in \R^d$, then this remains true for \emph{all} $x\in \R^d$. Similarly, if $\Pp_x\ll\widetilde{\Pp}_x$ ($\Pp_x\perp \widetilde{\Pp}_x$) for \emph{some} $x\in \R^d$, then this remains true for \emph{all} $x\in \R^d$.

For the proof of Theorem \ref{t:2} we need that the invariant $\sigma$-field is trivial.
\begin{lemma}\label{l:invariant-trivial}
    The invariant $\sigma$-field $\II$ is trivial with respect to both $\Pp_x$ and $\widetilde{\Pp}_x$ for all $x\in \R^d$.
\end{lemma}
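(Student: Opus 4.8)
The plan is to prove, for an arbitrary $\Lambda\in\II$, that the functions $\phi(y):=\Pp_y(\Lambda)$ and $\widetilde\phi(y):=\widetilde\Pp_y(\Lambda)$ are constant; since $\phi(X_t)=\E_x[\I_\Lambda\mid\MM_t]$ converges $\Pp_x$-a.s.\ to $\I_\Lambda$ (and likewise $\widetilde\phi(X_t)\to\I_\Lambda$ under $\widetilde\Pp_x$), a constant value is forced into $\{0,1\}$, which is exactly the assertion of the lemma. Fix $\Lambda\in\II$. Because $\theta_t^{-1}\Lambda=\Lambda$, the Markov property yields $\phi=P_t\phi$ for every $t\ge0$, where $P_t$ denotes the transition semigroup of $X$; hence $(\phi(X_t))_{t\ge0}$ is a bounded $\Pp_x$-martingale. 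Since $P_t$ is strong Feller ($X$ has continuous transition densities), $\phi$ is continuous; and since this bounded martingale is uniformly integrable, optional stopping at the a.s.\ finite exit time $\tau_D$ ($X$ being transient) of any bounded open set $D$ gives $\phi(x)=\E_x[\phi(X_{\tau_D})]$, i.e.\ $\phi$ is harmonic for $X$ on $\R^d$. The same three observations — Markov property, strong Feller property via the continuous densities $\tilde p(t,x,y)$ from \cite[Theorem~2.7]{S}, and transience — show that $\widetilde\phi$ is a bounded continuous function, harmonic for $\widetilde X$ on $\R^d$. It therefore remains to establish a Liouville theorem: every bounded function harmonic for $X$ (respectively for $\widetilde X$) on all of $\R^d$ is constant.

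For this I would invoke the scale-invariant Harnack inequality, namely that there is a constant $c_H\ge1$ with $\sup_{B(0,R)}g\le c_H\inf_{B(0,R)}g$ for every $R>0$ and every non-negative $g$ harmonic in $B(0,2R)$. For $X$ this is the degenerate case $F\equiv0$ (so $A\equiv0$) of Theorem \ref{t:hi}: the constant $r_0$ of Lemma \ref{l:key-lemma} may then be taken equal to $+\infty$, because the integral in \eqref{e:key-lemma-1} vanishes identically; consequently in the proof of Theorem \ref{t:hi} one has $\rho_0=\delta_K/2$, and the resulting Harnack constant depends only on $d$, $\alpha$ and the ratio $\delta_K/(2\diam K)$, which for $D=B(0,2R)$ and $K=\overline{B(0,R)}$ equals $1/4$ irrespective of $R$. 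For $\widetilde X$ the same scale-invariant inequality holds because $\widetilde X$ is a conservative symmetric pure-jump process with vanishing killing measure whose jump kernel $(1+F(x,y))\,\tilde c(d,\alpha)|x-y|^{-d-\alpha}$ is two-sidedly comparable to $|x-y|^{-d-\alpha}$; this is the standard Harnack inequality for such processes, and it can alternatively be obtained by repeating the argument of Theorem \ref{t:hi} with the Green-function estimates \eqref{e:green-fe} replacing the explicit form of $G$. Granting the Harnack inequality, the Liouville property follows: if $\phi$ is bounded and harmonic, put $M:=\sup_{\R^d}\phi$ and apply the inequality to $g:=M-\phi\ge0$ to get $M-\inf_{B(0,R)}\phi\le c_H\bigl(M-\sup_{B(0,R)}\phi\bigr)$; letting $R\to\infty$, the right-hand side tends to $0$ while the left-hand side tends to $M-\inf_{\R^d}\phi$, so $\phi\equiv M$. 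Hence $\phi$ and $\widetilde\phi$ are constant, and by the limit identities above these constants lie in $\{0,1\}$; thus $\II$ is trivial under $\Pp_x$ and under $\widetilde\Pp_x$ for every $x\in\R^d$.

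The routine steps — martingale convergence, optional stopping, and the Liouville-from-Harnack argument — are standard; the small point requiring attention is that $X$ typically jumps across $\partial D$, so one cannot stop at $\tau_D\wedge t$ and let $t\to\infty$ but must apply optional stopping directly at $\tau_D$, which is legitimate since the bounded martingale $\phi(X_t)$ is uniformly integrable. The real obstacle is the Harnack inequality for the transformed process $\widetilde X$: for $X$ it is classical and a special case of Theorem \ref{t:hi}, whereas for $\widetilde X$ one must invoke — or reprove from \eqref{e:green-fe} — the Harnack theory for symmetric jump processes whose jump intensity is two-sidedly comparable to that of the isotropic stable process.
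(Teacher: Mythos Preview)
Your argument is correct, and it takes a genuinely different route from the paper's. Both proofs begin identically: reduce triviality of $\II$ to a Liouville theorem by showing that $\phi(y)=\Pp_y(\Lambda)$ (resp.\ $\widetilde\phi$) is bounded and harmonic for $X$ (resp.\ $\widetilde X$) on all of $\R^d$. The divergence is in how Liouville is obtained. You invoke a scale-invariant Harnack inequality and run the standard oscillation argument $M-\inf_{B(0,R)}\phi\le c_H(M-\sup_{B(0,R)}\phi)\to 0$. The paper instead identifies Liouville with one-pointedness of the minimal Martin boundary: for $X$ this is \eqref{e:green-limit}, and for $\widetilde X$ it follows directly from the Green-function comparison \eqref{e:green-fe}, since then every Martin kernel $\widetilde M(\cdot,z)$ is trapped between $c^{-2}$ and $c^2$, so any minimal harmonic function dominates a positive constant and hence, by minimality, is constant.

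The trade-off is this: your approach is conceptually simpler (no Martin boundary machinery) but, as you yourself note, it imports a nontrivial result for $\widetilde X$ --- the scale-invariant Harnack inequality for symmetric jump processes with kernel comparable to $|x-y|^{-d-\alpha}$ (Bass--Levin, Chen--Kumagai). Your suggestion to ``repeat the argument of Theorem \ref{t:hi} with \eqref{e:green-fe}'' is not quite complete as stated, because Theorem \ref{t:hi} also relies on the explicit Poisson kernel $P_r$, and you would need the analogous exit-distribution estimates for $\widetilde X$; these follow from the Ikeda--Watanabe formula together with comparability of the \emph{killed} Green functions, which is a step beyond \eqref{e:green-fe}. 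The paper's Martin-boundary argument is more economical here: it uses only \eqref{e:green-fe} (already quoted from \cite{S}) and the abstract Kunita--Watanabe theory, with no need for a Harnack inequality for $\widetilde X$ at all.
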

\begin{proof}
    Using standard approximation techniques it is enough to consider non-negative bounded random variables. Suppose that $\Lambda$ is a non-negative and bounded $\II$-measurable random variable. Define $h(x):=\E_x [\Lambda]$. Then $h$ is an invariant function in the sense that $h(x)=\E_x h(X_t)$ for all $t\ge 0$. This implies that $(h(X_t))_{t\geq 0}$ is a bounded martingale and $\Lambda=\lim_{t\to \infty} h(X_t)$ $\Pp_x$-a.s.~for every $x\in \R^d$. By optional stopping, we see that $h(x)=\E_x h(X_{\tau_D})$ for every relatively compact open set $D\subset \R^d$ and every $x\in D$; this means that $h$ is harmonic on $\R^d$.
    Therefore, in order to show that $\II$ is $\Pp_x$-trivial for all $x\in \R^d$, it suffices to prove that constants are the only non-negative bounded harmonic functions with respect to $X$. This is equivalent to showing that the (minimal) Martin boundary of $\R^d$ with respect to $X$ consists of a single point, say $\infty$, cf.\ \cite{KW}.
    For stable L\'evy processes this is a well-known fact which follows from
    \begin{equation}\label{e:green-limit}
        \lim_{|y|\to \infty}\frac{G(x,y)}{G(x_0,y)}=1.
    \end{equation}
    For the purely discontinuous Girsanov transform $\widetilde{X}$ we proceed as follows. First note that because of the conservativeness of $\widetilde{X}$ constant functions are harmonic. It is easy to see that $\widetilde{X}$ satisfies the conditions from \cite{KW}, hence admits the Martin boundary $\partial^M \R^d$. Fix a point $x_0\in \R^d$ and let
    $$
        \widetilde{M}(x,y)=\frac{\widetilde{G}(x,y)}{\widetilde{G}(x_0,y)},
    $$
    where, as before, $\widetilde{G}(x,y)$ denotes the Green function of $\widetilde{X}$. If $z\in \partial^M \R^d$, then the Martin kernel at $z$ is given by
    \begin{equation}\label{e:mk-tildeX}
        \widetilde{M}(x,z)
        =\lim_{y_n\to z}\widetilde{M}(x,y_n)
        =\lim_{y_n\to z} \frac{\widetilde{G}(x,y_n)}{\widetilde{G}(x_0,y_n)}
    \end{equation}
    where $y_n\to z$ in the Martin topology and $|y_n|\to \infty$. Recall from \eqref{e:green-fe} that $c^{-1}G(x,y)\le \widetilde{G}(x,y)\le c \,G(x,y)$ for all $x,y \in \R^d$ where $G$ denotes the Green function of the $\alpha$-stable L\'evy process. We conclude from \eqref{e:green-limit} and \eqref{e:mk-tildeX} that
    \begin{equation}\label{e:mk-estimate}
        c^{-2} \le \widetilde{M}(x,z)\le c^2
        \quad \text{for all\ } x\in \R^d \text{\ and\ }z\in \partial^M \R^d.
    \end{equation}
    Suppose that $h$ is a minimal harmonic function with respect to $\widetilde{X}$. It follows from \cite{KW} that there is a finite measure $\nu$ on $\partial^M \R^d$ such that
    $$
        h(x)=\int_{\partial^M \R^d} \widetilde{M}(x,z)\, \nu(dz).
    $$
    From \eqref{e:mk-estimate} we conclude that $c^{-2}\nu(\partial^M \R^d)\le h$. The minimality of $h$ now implies that it is constant. Thus, the minimal Martin boundary of $\R^d$ with respect to $\widetilde{X}$ consists of a single point.
\end{proof}

\begin{proof}[Proof of Theorem \ref{t:2}]
    \ref{t:2-a} follows immediately from Corollary \ref{c:dichotomy} and Lemma \ref{l:invariant-trivial}.

    \ref{t:2-b} The assumption $\widetilde{\Pp}_x\sim \Pp_x$ implies, by Theorem \ref{t:1}, that
    $$
        \Pp_x\left(\sum_{t>0}F^2(X_{t-}, X_t)<\infty\right)
        = 1
    $$
    Note that
    $$
    F^2(x,y)\le C^2 \frac{|x-y|^{2\beta}}{(1+|x|^{\beta}+|y|^{\beta})^2}\le C^2 \frac{|x-y|^{2\beta}}{1+|x|^{2\beta}+|y|^{2\beta}}\,.
    $$
    Since $2\beta  > \alpha$,
    Theorem \ref{t:finite-expectation} shows that $\sup_{x\in \R^d} \E_x \left[\sum_{t>0} F^2 (X_{t-}, X_t)\right] <\infty$. From Theorem \ref{t:1}  (\~c)  we conclude that $\sup_{x\in \R^d} \entropy(\Pp_x; \widetilde{\Pp}_x)<\infty$.
\end{proof}

\begin{proof}[Proof of Theorem \ref{t:3}]
    By assumption $2\gamma <\alpha <2\beta$. Denote by $B(x_n,r_n)$ the balls and by $\Phi$ the function constructed in the proof of Theorem \ref{t:infinite-expectation} with $2\gamma$ and $2\beta$. Thus
    $$
        \Phi(x,y)
        =
        \begin{cases}
            \dfrac{|x-y|^{2\beta}}{|x|^{2\gamma}+|y|^{2\gamma}}, & x,y\in B(x_n,r_n) \text{\ for some $n$ and $|x-y|<1$},\\[\bigskipamount]
            0, & \text{otherwise}.
        \end{cases}
    $$
    Define $F(x,y)=\frac18 \sqrt{\Phi(x,y)}$. Then
    $$
        F(x,y)
        \leq \frac18\, \frac{|x-y|^{\beta}}{\sqrt{|x|^{2\gamma}+|y|^{2\gamma}}}
        \le \frac14 \frac{|x-y|^{\beta}}{|x|^{\gamma}+|y|^{\gamma}}
        \le \frac12 \frac{|x-y|^{\beta}}{1+|x|^{\gamma}+|y|^{\gamma}}
    $$
    since we can take $|x|,|y|\ge 1$.
    As $\sum_{t>0}F^2(X_{t-}, X_t)=\frac18 \sum_{t>0}\Phi(X_{t-},X_t)$, we see from Theorem \ref{t:infinite-expectation} that $\sum_{t>0}F^2(X_{t-}, X_t)<\infty$ $\Pp_x$ a.s.~and $\E_x[\sum_{t>0}F^2(X_{t-}, X_t)]=\infty$. By Theorem \ref{t:1}(\~b) and (\~c) it follows that $\Pp_x\ll \widetilde{\Pp}_x$ and $\entropy(\Pp_x;\widetilde{\Pp}_x)=\infty$.
\end{proof}

\small

\noindent
\textsc{Ren\'e L.~Schilling:} Institut f\"ur Mathematische Stochastik, Fachrichtung Mathematik, Technische Universit\"at Dresden, 01062 Dresden, Germany.\\ \emph{email:} \texttt{rene.schilling@tu-dresden.de}

\bigskip
\noindent
\textsc{Zoran Vondra\v{c}ek:} Department of Mathematics, Faculty of Science, University of Zagreb, Bijeni\v{c}ka c.~30, 10000 Zagreb, Croatia. \\ \emph{email:} \texttt{vondra@math.hr}
\end{document}